\documentclass[a4paper, 12pt]{amsart}
\sloppy
\pagestyle{plain}
\usepackage{amsmath, mathrsfs, amssymb, array,fancybox, upgreek}
\usepackage[all, matrix, arrow, curve]{xy}

\usepackage{enumitem}
\usepackage[subnum]{cases}
\usepackage{amsthm}
\usepackage{hyperref}

\usepackage{blkarray}

\makeatletter
\@addtoreset{equation}{subsection}
\makeatother

\newcommand{\n}{\operatorname{n}}
\newcommand{\coker}{\operatorname{coker}}
\newcommand{\Cl}{\operatorname{Cl}}

\newcommand{\Diff}{\operatorname{Diff}}
\newcommand{\Supp}{\operatorname{Supp}}
\newcommand{\Sing}{\operatorname{Sing}}
\newcommand{\red}{\operatorname{red}}
\newcommand{\const}{\operatorname{const}}

\newcommand{\wt}{\operatorname{wt}}
\newcommand{\ord}{\operatorname{ord}}

\newcommand{\Ho}{\operatorname{H}}

\newcommand{\len}{\operatorname{len}}
\newcommand{\gr}{\operatorname{gr}}

\newcommand{\pr}{\operatorname{pr}}

\newcommand{\unit}{\operatorname{unit}}

\newcommand{\qq}{\mathbin{\sim_{\scriptscriptstyle{\QQ}}}}

\newcommand{\ovalh}[1]{{\raisebox{2pt}{\ovalbox{#1}}}}
\newcommand{\ovalv}[1]{{\raisebox{7pt}{\ovalbox{#1}}}}

\newcommand{\toplus}{\mathbin{\tilde\oplus}}
\newcommand{\totimes}{\mathbin{\tilde\otimes}}

\renewcommand{\emptyset}{\varnothing}

\newcommand{\sigmaord}{\sigma\mbox{-}\ord}

\newcommand{\CC}{\mathbb{C}}
\newcommand{\ZZ}{\mathbb{Z}}
\newcommand{\PP}{\mathbb{P}}
\newcommand{\QQ}{\mathbb{Q}}

\newcommand{\mm}{{\mathfrak{m}}}

\newcommand{\KKK}{{\mathscr{K}}}

\newcommand{\EEE}{{\mathscr{E}}}
\newcommand{\OOO}{\mathscr{O}}
\newcommand{\DDD}{\mathscr{D}}
\newcommand{\CCC}{\mathscr{C}}
\newcommand{\AAA}{\mathscr{A}}
\newcommand{\BBB}{\mathscr{B}}
\newcommand{\FFF}{\mathscr{F}}
\newcommand{\GGG}{\mathscr{G}}
\newcommand{\VVV}{\mathscr{V}}

\newcommand{\muu}{{\boldsymbol{\mu}}}

\newcommand{\type}[1]{$\mathrm{#1}$}

\renewcommand\labelenumi{{\rm (\roman{enumi})}}
\renewcommand\theenumi{(\roman{enumi})}

\swapnumbers
\theoremstyle{plain}
\newtheorem{theorem}[subsection]{Theorem}
\newtheorem{lemma}[subsection]{Lemma}
\newtheorem{proposition}[subsection]{Proposition}

\newtheorem{scorollary}[equation]{Corollary}
\newtheorem*{claim*}{Claim}
\newtheorem{sclaim}[equation]{Claim}
\newtheorem{slemma}[equation]{Lemma}

\theoremstyle{definition}
\newtheorem{setup}[subsection]{Set-up}

\newtheorem{sdefinition}[equation]{Definition}
\newtheorem{example-remark}[subsection]{Remark-Example}
\newtheorem{subexample-remark}[equation]{Remark-Example}
\newtheorem{case}[subsection]{}
\newtheorem{scase}[equation]{}

\newtheorem{example}[subsection]{Example}

\newtheorem{remark}[subsection]{Remark}

\newtheorem{sremark}[equation]{Remark}

\newtheorem{assumption}[subsection]{Assumption}

\newtheorem{sconstruction}[equation]{Construction}

\newtheorem{computation}[subsection]{Computation}

\newcommand{\xref}[1]{{\rm\ref{#1}}}

\title{Threefold extremal contractions \\ of type \type{(IIA)}, II}
\author{Shigefumi Mori}

\address{
Shigefumi~Mori:
Research Institute for Mathematical Sciences,
Kyoto University, Kyoto, Japan
\newline\indent
Kyoto University Institute for Advanced Study,
Kyoto University, Kyoto, Japan
}
\email{mori@kurims.kyoto-u.ac.jp}

\author{Yuri Prokhorov}

 \thanks{The first author's work partially supported by JSPS KAKENHI Grant Numbers (B) 25287005 and (S) 24224001.
 The second author's work partially supported by the RFFI grants
 15-01-02164a,  15-01-02158a,  and by the Russian Academic Excellence Project '5-100'.}

\address{Yuri~Prokhorov:
Steklov Mathematical Institute of Russian Academy of Sciences, Moscow, Russia
\newline\indent
Department of Algebra, 
Moscow State Lomonosov University
\newline\indent
National Research University Higher School of Economics
}
\email{prokhoro@mi.ras.ru}

\begin{document}

\begin{abstract}
Let $(X, C)$ be a germ of a threefold $X$ with terminal singularities
along an irreducible reduced complete curve $C$
with a contraction $f: (X, C)\to (Z, o)$
such that $C=f^{-1}(o)_{\red}$ and $-K_X$
is ample. 
This paper continues our study of such germs containing a point
of type \type{(IIA)} started in \cite{Mori-Prokhorov-IIA-1}.
\end{abstract}
\maketitle


\section{Introduction}
Let $(X,C)$ be a germ of a threefold with terminal singularities
along a reduced complete curve. We say that $(X,C)$
is an \textit{extremal curve germ} if
there is a contraction $f: (X,C)\to (Z,o)$ such that
$C=f^{-1}(o)_{\red}$ and $-K_X$
is $f$-ample.
Furthermore, $f$ is called \textit{flipping} if
its exceptional locus coincides with $C$ and
\textit{divisorial}
if its exceptional locus is two-dimensional.
If $f$ is not birational, then $\dim Z=2$ and
$(X,C)$ is said to be
a \textit{$\QQ$-conic bundle germ} \cite{Mori-Prokhorov-2008}.

In this paper we consider only extremal curve germs with \textit{irreducible}
central fiber $C$.
All the possibilities for the local behavior of $(X,C)$ are classified into types
\type{(IA)}, \type{(IC)}, \type{(IIA)}, \type{(IIB)},
\type{(IA^\vee)}, \type{(II^\vee)}, \type{(ID^\vee)}, \type{(IE^\vee)},
and \type{(III)},
whose definitions we
refer the reader to \cite{Mori-1988} and \cite{Mori-Prokhorov-2008}.

In this paper we complete the classification of extremal curve germs
containing points of type \type{(IIA)} started in \cite{Mori-Prokhorov-IIA-1}.
As in \cite{Kollar-Mori-1992}, \cite{Mori-Prokhorov-IA}, and
\cite{Mori-Prokhorov-IC-IIB}
the classification is done in terms of
a general hyperplane section,
that is, a general divisor $H$ of $|\OOO_X|_C$,
the linear subsystem of $|\OOO_X|$ consisting of sections containing $C$.
The case where $H$ is normal was treated in \cite{Mori-Prokhorov-IIA-1}.
In this paper we consider the case of non-normal $H$.
Our main result is the following.
\begin{theorem}\label{main}
Let $(X,C)$ be an extremal curve germ
and let $f: (X, C)\to (Z,o)$ be the corresponding
contraction.
Assume that $(X,C)$ it has a point $P$ of type \type{(IIA)}.
Furthermore, assume that the general member $H\in |\OOO_X|_C$ is not normal.
Then the following are the only possibilities
for the dual graph of $(H,C)$, and all the possibilities do occur.

\begin{enumerate}
\renewcommand\labelenumi{{\rm (\arabic{section}.\arabic{subsection}.\arabic{enumi})}\refstepcounter{equation}}
\renewcommand\theenumi{(\arabic{section}.\arabic{subsection}.\arabic{enumi})}

\item
\label{main-theorem-divisorial}
$f$ is divisorial\footnote{This case was erroneously
omitted in \cite[Th. 3.6 and Cor. 3.8]{Tziolas2005}.}, $f(H)\ni o$ is of type \type{D_{5}},
\begin{equation*}
\xy
\xymatrix@R=7pt@C=17pt{
&\circ\ar@{-}[d]
\\
\underset {} \bullet \ar@{-}[r]
&\underset 3\circ\ar@{-}[r]&\circ\ar@{-}[r]&\circ
\\
&\circ\ar@{-}[u]
}
\endxy
\end{equation*}

\item
\label{main-theorem-conic-bundle}
$f$ is a $\QQ$-conic bundle over a smooth surface,
\begin{equation*}
\vcenter{
\xy
\xymatrix@R=7pt@C=11pt{
&\circ\ar@{-}[r]&\overset {3}\circ\ar@{-}[d]\ar@{-}[r]&\circ
\\
\bullet \ar@{-}[r] &\underset {}\circ\ar@{-}[r]&\circ\ar@{-}[r]&\circ
}
\endxy}
\end{equation*}
\end{enumerate}
In both cases
$X$ can have at most one extra  point of type \type{(III)}.
\end{theorem}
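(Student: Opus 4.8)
The plan is to reconstruct the dual graph of $(H,C)$ from the normalization $\nu\colon \overline H\to H$ together with the local analytic picture at the point $P$ of type \type{(IIA)}. Since $H$ is assumed non-normal, the conductor of $\nu$ is a nonempty one-dimensional subscheme; the first task is to show that the non-normal locus is contained in $C$ and to describe the strict transform $\overline C$ of $C$ on $\overline H$. I would then fix the local contribution of $P$: by the classification of type \type{(IIA)} points in \cite{Mori-1988} and the explicit computations of \cite{Mori-Prokhorov-IIA-1}, the germs $(X,P)$ and $(H,P)$ are highly constrained, so the minimal resolution of $\overline H$ near $P$ contributes a definite weighted chain. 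This is the origin of the weight-$3$ vertex appearing in both graphs, and the way the conductor meets this chain—read off via adjunction and the different $\Diff$—dictates the position of the $\bullet$-vertex.

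Next I would glue in the contributions of the remaining singular points of $X$ on $C$ and impose the global constraints. On $\overline H$ the curve $\overline C$ is smooth rational, and the ampleness of $-K_X$ gives $-K_{\overline H}\cdot\overline C>0$ together with control of $\overline C^{\,2}$; running adjunction on $\overline H$ and bounding the total length of the resolution by the terminal condition on $X$ cuts the locally admissible configurations down to a finite list. The remaining points of $X$ meeting $C$ can only be smooth points or cyclic quotient (type \type{(III)}) points, and the same numerical bookkeeping that pins down the two displayed trees simultaneously forces the total resolution graph to be ``full'', which is what yields the bound of at most one extra point of type \type{(III)}.

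To separate the two outcomes I would decide whether $f$ is birational. If $\overline C$ is contractible inside $\overline H$, then $f(H)$ acquires a Du Val singularity which the numerics identify as type \type{D_5}, and one checks that $f$ is divisorial rather than flipping; the exclusion of the flipping case should be forced by the same length/degree inequalities, since a flip would require a graph not on the admissible list. Otherwise $f$ is not birational, so $\dim Z=2$ and the $\QQ$-conic bundle theory of \cite{Mori-Prokhorov-2008} applies, forcing $Z$ smooth and producing the second graph. I expect the main obstacle to be precisely this global step: the local data at $P$ do not by themselves determine $\overline C^{\,2}$ or the gluing length of the conductor, so ruling out the intermediate weighted trees that are locally legal but globally impossible requires feeding the terminal and contraction constraints in together, rather than one point at a time. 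Finally, to show that both cases actually occur I would produce explicit germs—via index-one covers and equivariant gluing as in \cite{Mori-Prokhorov-IIA-1}—whose general hyperplane sections realize the two dual graphs.
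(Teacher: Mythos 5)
There is a genuine gap at the step where you assert that, by the classification of type \type{(IIA)} points, ``the germs $(X,P)$ and $(H,P)$ are highly constrained, so the minimal resolution of $\overline H$ near $P$ contributes a definite weighted chain.'' This is precisely where the bulk of the paper's work lies, and it is not automatic. The local equation of $X$ at $P$ is $\{\alpha=0\}/\muu_4(1,1,3,2)$ with $\alpha$ depending on the invariant $\ell(P)=\len_P I^{\sharp(2)}/I^{\sharp2}$, and $H$ is cut out by a second semi-invariant $\beta$; the dual graph of the resolution depends sensitively on which monomials (e.g.\ $y_1y_3$, $y_1^2y_4$, $y_4^2$, $y_2y_3$) occur in $\beta$ and on coefficient conditions such as $\epsilon c\neq\delta$. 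Pinning these down requires: (i) restricting $\ell(P)$ to $\{3,4,7,8\}$ via deformation arguments (Proposition \ref{proposition-cases-lP}); (ii) a case-by-case analysis of the possible $\ell$-splittings of $\gr_C^2\OOO$ (Lemma \ref{lemma-possibilities-lP=3+III} and its analogue in Section \ref{section-lP=4}), most of which are \emph{locally} admissible and are excluded only by global inputs --- the vanishing $\chi(\omega/F^2\omega)=0$, $\Ho^1$-vanishing from Lemma \ref{lemma-grC}, and the argument of Lemma \ref{slemma-lP=3+III-generalityH} showing that certain configurations force the chosen $H$ to be non-general in $|\OOO_X|_C$. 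Your sketch has no mechanism to perform this selection, so the ``definite weighted chain'' at $P$ is unsupported and the subsequent global gluing cannot start. Relatedly, the exclusion of the flipping case is not a consequence of your length/degree inequalities on the graph; it follows directly from $\Ho^0(\gr_C^1\OOO)=0$ and \cite[ch.~7]{Kollar-Mori-1992}, and the non-normality hypothesis is equivalent to this vanishing.

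A second, smaller issue: the bound of at most one extra point of type \type{(III)} is not obtained by ``numerical bookkeeping that forces the resolution graph to be full.'' It is established up front, independently of any resolution, from the structure results of \cite{Mori-1988} and \cite{Mori-Prokhorov-2008} (at most one non-Gorenstein and at most one Gorenstein singular point on $C$) combined with the deformation argument in Proposition \ref{proposition-cases-lP} that rules out $\ell(P)=5,9$. Your normalization-plus-adjunction framework does correspond roughly to what the Appendix computations (\ref{computation-lP=3a-III}, \ref{computation-lP=4+III}) carry out once the equations $\alpha=\beta=0$ are known, and your plan for existence via explicit germs matches Examples \ref{example-divisorial-lP=3}--\ref{example-conic-bundle-lP=8}; but without the infinitesimal analysis of $\gr_C^{\bullet}\OOO$ the argument does not close.
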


\begin{remark}
If $(X,C)$ is an extremal curve germ of type \type{(IIA)}, then 
according to \cite[Corollary 2.6]{Mori-Prokhorov-IIA-1}
the general member $H\in |\OOO_X|_C$ is not normal if and only if $\Ho^0(\gr_C^1\OOO)=0$.
\end{remark}

Note that the description of a member $H\in |\OOO_X|_C$ is just
a part of our results. We also describe the infinitesimal structure
of the corresponding extremal curve germs.
Refer to \eqref{equation-possibilities-lP=3+III-b} and \ref{scorollary-9-6-8}
for the case \ref{main-theorem-divisorial} and 
to \eqref{equation-lP=4-gr-2-C-O} and \ref{case-conic-bundle}
for the case \ref{main-theorem-conic-bundle}.
We also provide many examples (see \ref{example-divisorial-lP=3}, \ref{example-divisorial-lP=7},
\ref{example-conic-bundle-lP=4+III}, \ref{example-conic-bundle-lP=8}).

The proof of the main theorem splits into cases
according to the invariant $\ell(P)$ which, in our case, can take values $\ell(P)\in\{ 3,\, 4,\, 7,\, 8\}$ 
(see \ref{equation-iP} and Proposition \ref{proposition-cases-lP}).
Cases of odd and even $\ell(P)$ 
will be considered in Sections \ref{section-lP=3-III} and
\ref{section-lP=4}, respectively.

\section{Preliminaries}\label{section-Preliminaries}
\begin{setup}
\label{Set-up}
Let $(X,C)$ be an extremal
curve germ and let $f: (X, C)\to (Z,o)$ be the corresponding
contraction.
The ideal sheaf of $C$ in $X$ we denote by $I_C$ or simply by $I$.
Assume that $(X,C)$ has a point $P$ of type \type{(IIA)}.
Then by \cite[6.7, 9.4]{Mori-1988} and
\cite[8.6, 9.1, 10.7]{Mori-Prokhorov-2008} $P$ is the only non-Gorenstein point of $X$
and $(X,C)$ has at most one Gorenstein singular point $R$ \cite[6.2]{Mori-1988},
\cite[9.3]{Mori-Prokhorov-2008}.
If $\Ho^0(\gr_C^1\OOO)=0$, then $(X,C)$ is not flipping 
\cite[ch. 7]{Kollar-Mori-1992}.
\end{setup}

\begin{scase}\label{sde}
Thus, in the case $\Ho^0(\gr_C^1\OOO)=0$, we have two possibilities:
\begin{itemize}
\item
$f$ is a $\QQ$-conic bundle and $(Z,o)$ is smooth \cite[Th. 1.2]{Mori-Prokhorov-2008};
\item
$f$ is a divisorial contraction and $(Z,o)$ is a cDV point
(or smooth) \cite[Th. 3.1]{Mori-Prokhorov-IA}. 
\end{itemize}
\end{scase}

\begin{case}\label{equation-iP}
Everywhere in this paper $(X,P)$
denotes a terminal singularity $(X,P)$ of type \type{cAx/4} and 
$(X^\sharp, P^\sharp)\to (X,P)$
denotes its index-one cover. 
Let
\begin{equation*}
\ell(P):=\len_P I^{\sharp (2)}/I^{\sharp 2},
\end{equation*}
where $I^\sharp$ is the ideal defining $C^\sharp$ in $X^\sharp$.
Recall (see \cite[(2.16)]{Mori-1988}) that in our case
\begin{equation*}
i_P(1)=\lfloor(\ell(P)+6)/4\rfloor.
\end{equation*}
\end{case}

\begin{case}
\label{equation-IIA-point}
According to \cite[A.3]{Mori-1988} we can express the \type{(IIA)} point as
\begin{equation}
\label{equation-XC}
\begin{split}
(X, P)&=
\{\alpha=0\}/\muu_4(1, 1, 3, 2)\subset\CC^4_{y_1,\dots, y_4}/\muu_4(1, 1, 3, 2),
\\
C&=\{y_1\text{-axis}\}/\muu_4,
\end{split}
\end{equation}
where $\alpha=\alpha(y_1,\dots, y_4)$ is a semi-invariant such that
\begin{equation}\label{equation-alpha}
\wt\alpha\equiv 2\mod 4,\qquad \alpha\equiv y_1^{\ell(P)}y_j\mod (y_2, y_3, y_4)^2,
\end{equation}
where $j= 2$ (resp. $3$, $4$) if $\ell(P)\equiv 1$ (resp. $3$, $0$) $\mod 4$
\cite[(2.16)]{Mori-1988} and $(I^\sharp)^{(2)}=(y_j)+(I^\sharp)^{2}$.
Moreover, $y_2^2,\, y_3^2\in \alpha$ (because $(X,P)$ is of type
\type{cAx/4}).
\end{case}

\begin{case}\label{ge}
Recall that in our case
the general member $D\in |-K_X|$ does not contain $C$
\cite[Th. 7.3]{Mori-1988},
\cite[Prop. 1.3.7]{Mori-Prokhorov-2008}.
Hence $D\cap C=\{P\}$, $D\simeq f(D)$, and $D$ has at $P$ a singularity of type \type{D_{2n+1}}
\cite[6.4B]{Reid-YPG1987}.
In the coordinates $y_1,\dots,y_4$, the divisor $D$ is given by
\begin{equation*}
D=\{y_1= \xi\}/\muu_4,\qquad \xi\in (y_2,\, y_3,\, y_4).
\end{equation*}
\end{case}

\begin{scase}\label{sde1}
Let $H$ be a general member of $|\OOO_X|_C$ through $C$ and let $\beta\in \Ho^0(I_C)$ be a
non-zero section defining $H$.
Let $H_Z=f(H)$ and let $\psi: H^{\n}\to H$ be the normalization. The composition map
$H^{\n}\to H_Z$ has connected fibers. Moreover,
it is a rational curve fibration if $\dim Z=2$
and it is a birational contraction to a point $(H_Z, o)$
which is either smooth or Du Val point of type \type{A} or \type{D}
if $f$ is divisorial (see \ref{ge}). 
In both cases $H^{\n}$ has only rational singularities.
\end{scase}

For convenience of the reader we formulate the following lemma which follows from the 
standard exact sequence
\begin{equation*}
0\xrightarrow{\hspace*{20pt}} I^{(n+1)} \xrightarrow{\hspace*{20pt}} I^{(n)} \xrightarrow{\hspace*{20pt}} \gr_C^n\OOO\xrightarrow{\hspace*{20pt}} 0.
\end{equation*}

\begin{lemma}\label{lemma-grC}
Let $(X,C)$ be an extremal curve germ.
Then the following assertions hold.
\begin{enumerate}
\item \label{lemma-grC-1}
If $\Ho^1(\gr_C^n\OOO)=0$ and the 
map $\Ho^0(I^{(n)})\to \Ho^0(\gr_C^n\OOO)$ is surjective, then 
$\Ho^1(I^{(n+1)})\simeq \Ho^1(I^{(n)})$.

\item \label{lemma-grC-2}
If for all $i<n$ one has $\Ho^1(\gr_C^i\OOO)=0$ and the 
map $\Ho^0(I^{(i)})\to \Ho^0(\gr_C^i\OOO)$ is surjective, then 
$\Ho^1(I^{(n)})\simeq \Ho^1(\gr_C^n\OOO)=0$.
\item \label{lemma-grC-3}
In particular, $\Ho^1(I)= \Ho^1(\gr_C^1\OOO)=0$ and if 
$\Ho^0(\gr_C^1\OOO)=0$, then
$\Ho^1(I^{(2)})= \Ho^1(\gr_C^2\OOO)=0$.
\end{enumerate}
\end{lemma}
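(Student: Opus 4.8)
The plan is to run the long exact cohomology sequence of the displayed short exact sequence $0 \to I^{(n+1)} \to I^{(n)} \to \gr_C^n\OOO \to 0$ and to read off each assertion by a diagram chase. The only geometric inputs I will need are two ambient vanishing statements for the germ: $\Ho^1(\OOO_X)=0$, and $\Ho^i(\FFF)=0$ for every coherent $\FFF$ and every $i\ge 2$. Both follow from the contraction $f: (X,C)\to(Z,o)$, whose fibres are at most one-dimensional: thus $R^if_*\FFF=0$ for $i\ge 2$, and $R^1f_*\OOO_X=0$ is the standard vanishing for extremal curve germs (here $X$ has rational singularities). Since $(Z,o)$ is a Stein germ and $f$ is proper, the Leray spectral sequence degenerates to $\Ho^i(X,\FFF)=\Ho^0(Z,R^if_*\FFF)$, which yields both vanishings.

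Statement \xref{lemma-grC-1} is pure homological algebra. The relevant segment reads
\begin{equation*}
\Ho^0(I^{(n)})\xrightarrow{\;a\;}\Ho^0(\gr_C^n\OOO)\xrightarrow{\;\delta\;}\Ho^1(I^{(n+1)})\xrightarrow{\;b\;}\Ho^1(I^{(n)})\xrightarrow{\;c\;}\Ho^1(\gr_C^n\OOO).
\end{equation*}
Surjectivity of $a$ forces $\delta=0$, so $b$ is injective; and $\Ho^1(\gr_C^n\OOO)=0$ kills the target of $c$, so $b$ is surjective. Hence $b$ is the asserted isomorphism.

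For \xref{lemma-grC-2} I would first record the base case $\Ho^1(I)=\Ho^1(I^{(1)})=0$: applying cohomology to $0\to I\to\OOO_X\to\OOO_C\to 0$ gives $\Ho^0(\OOO_X)\to\Ho^0(\OOO_C)\to\Ho^1(I)\to\Ho^1(\OOO_X)$, where the first map is onto (constants, using $\Ho^0(\OOO_C)=\CC$) and $\Ho^1(\OOO_X)=0$. For $1\le i<n$ the hypotheses are exactly those of \xref{lemma-grC-1}, so iterating that statement yields $\Ho^1(I^{(n)})\simeq\Ho^1(I^{(n-1)})\simeq\dots\simeq\Ho^1(I)=0$. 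To obtain also $\Ho^1(\gr_C^n\OOO)=0$ I return to the $n$-th sequence: in $\Ho^1(I^{(n)})\to\Ho^1(\gr_C^n\OOO)\to\Ho^2(I^{(n+1)})$ the outer terms vanish (the left by the previous line, the right by $\Ho^{\ge 2}=0$), so the middle one does too; the two groups being zero are then trivially isomorphic.

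Part \xref{lemma-grC-3} is a specialization. The equality $\Ho^1(I)=0$ is the base case above, and $\Ho^1(\gr_C^1\OOO)=0$ follows from $\Ho^1(I)\to\Ho^1(\gr_C^1\OOO)\to\Ho^2(I^{(2)})$ with both outer terms zero. For the conditional statement, $\Ho^0(\gr_C^1\OOO)=0$ makes $\Ho^0(I)\to\Ho^0(\gr_C^1\OOO)$ trivially surjective, so the hypotheses of \xref{lemma-grC-2} hold at $n=2$, giving $\Ho^1(I^{(2)})=\Ho^1(\gr_C^2\OOO)=0$. The only real obstacle is pinning down the two ambient vanishing statements; once $\Ho^1(\OOO_X)=0$ and $\Ho^{\ge 2}(\FFF)=0$ are in hand, everything else is a mechanical chase.
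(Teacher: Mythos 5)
Your argument is correct and is exactly the route the paper intends: the authors give no written proof, merely noting that the lemma "follows from the standard exact sequence" $0\to I^{(n+1)}\to I^{(n)}\to \gr_C^n\OOO\to 0$, and your diagram chase together with the standard vanishings $\Ho^1(\OOO_X)=0$ and $\Ho^{\ge 2}(\FFF)=0$ (valid since the fibres of $f$ are at most one-dimensional) fills in precisely what is left to the reader. The only cosmetic point is that $R^1f_*\OOO_X=0$ is best cited as the standard vanishing for extremal curve germs (relative Kawamata--Viehweg, resp. \cite[(1.2.7)]{Mori-Prokhorov-2008} in the $\QQ$-conic bundle case) rather than as a consequence of rational singularities of $X$ alone.
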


The following auxiliary result can be proved by induction on $n$.
\begin{proposition}\label{proposition-lP=4-XC}
Let $(X,P)\subset \CC^4_{x_1,\dots,x_4}$ be a hypersurface containing 
$C:=\{\text{$x_1$-axis}\}$ with defining equation $h\in \CC\{x_1,\dots,x_4\}$ such that
\begin{equation*}
h=x_1^mx_4+h_2(x_2,x_3)+h_3(x_1,\dots,x_4),
\end{equation*}
where $h_2$ is a quadratic form in $x_2$ and $x_3$, $h_3\in (x_2,x_3,x_4)^3$, and $m\ge 1$.
Let $I=(x_2,x_3,x_4)$ be the ideal of $C$. Let 
\begin{equation*}
\gr_C^{\bullet}:= \bigoplus_{n\ge 0} \gr_C^n\OOO
\end{equation*}
be the graded $\OOO_C$-algebra with the degree $n$ part $\gr_C^n\OOO$.
Then the following assertions hold.
\begin{enumerate}
\item \label{proposition-lP=4-XC-1}
If $h_2=0$, then
\begin{equation*}
\gr_C^2\OOO= S^2\gr_C^1\OOO.
\end{equation*}

\item \label{proposition-lP=4-XC-2}
If $h_2\neq 0$, then
\begin{equation*}
\gr_C^{\bullet}\OOO\simeq\OOO_C[x_2,x_3,x_4]/(x_1^mx_4+h_2),
\end{equation*}
where $x_2,x_3,x_4$ have degree $1$, $1$, $2$, respectively.

\item \label{proposition-lP=4-XC-4}
If $x_3^2\in h_2$, then 
\begin{equation*}
\gr_C^{\bullet}\OOO =\OOO_C[x_2,x_4]\oplus x_3\OOO_C[x_2,x_4].
\end{equation*}
\item \label{proposition-lP=4-XC-5}
If $h_2=x_2x_3$, then 
\begin{equation*}
\gr_C^{\bullet}\OOO =\OOO_C[x_4]\oplus x_2\OOO_C[x_2,x_4]\oplus x_3\OOO_C[x_3,x_4].
\end{equation*}
\end{enumerate}
\end{proposition}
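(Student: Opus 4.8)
The plan is to identify the symbolic filtration $\{I^{(n)}\}$ with the filtration by weighted order attached to the weights $w(x_1)=0$, $w(x_2)=w(x_3)=1$, $w(x_4)=2$, and then to read off all four assertions from that single identification. Write $R=\CC\{x_1,\dots,x_4\}/(h)$ and $\OOO_C=R/I=\CC\{x_1\}$, let $W^n\subseteq R$ be the image of the functions of weighted order $\ge n$, and let $h_0$ be the initial form of $h$ for these weights. Since $h_2$ has weight $2$ and $h_3\in(x_2,x_3,x_4)^3$ has weight $\ge 3$, one gets $h_0=x_1^m x_4+h_2$. The first step is to compute the weighted associated graded ring of $R$. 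The weighted associated graded of $\CC\{x_1,\dots,x_4\}$ is the polynomial ring $\OOO_C[x_2,x_3,x_4]$ (with $x_4$ in degree $2$), which is a domain; as $h_0\neq 0$ it is a nonzerodivisor, so for the principal ideal $(h)$ the initial ideal equals $(h_0)$. Hence the weighted associated graded of $R$ is
\[
  A:=\OOO_C[x_2,x_3,x_4]/(x_1^m x_4+h_2),\qquad \deg x_2=\deg x_3=1,\ \deg x_4=2.
\]

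The main step is to show that the weighted and the symbolic filtrations agree, i.e.\ that $A\simeq\gr_C^{\bullet}\OOO$, which is assertion~(ii). Localizing at the generic point $\mathfrak p$ of $C$, where $\partial h/\partial x_4\notin\mathfrak p$, shows that $R_{\mathfrak p}$ is regular with $\mathfrak p R_{\mathfrak p}=(x_2,x_3)$ and $x_4=-x_1^{-m}h_2\in(x_2,x_3)^2$; consequently $W^n\subseteq I^{(n)}$, and there is an evident graded surjection $\psi\colon A\to\gr_C^{\bullet}\OOO$ (onto because $x_2,x_3,x_4$ generate $I$). I would prove injectivity by inverting $x_1$: one has $\OOO_C[x_1^{-1}]=K:=\operatorname{Frac}(\OOO_C)$, and the relation then solves $x_4=-x_1^{-m}h_2$, so both $A[x_1^{-1}]$ and $\gr_{\mathfrak p}R_{\mathfrak p}$ collapse to the polynomial ring $K[x_2,x_3]$; thus $\psi[x_1^{-1}]$ is a degree-preserving surjection of $K[x_2,x_3]$ onto itself, hence an isomorphism. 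A direct verification, using $h_2\neq 0$, shows $A$ has no $x_1$-torsion, so $\ker\psi$—which is $x_1$-torsion by the previous sentence—must vanish. This proves~(ii); the same conclusion is what one obtains by the induction on $n$ alluded to above, feeding $W^n=I^{(n)}$ into $W^{n+1}=I^{(n+1)}$.

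The remaining assertions are formal consequences. For~(i), when $h_2=0$ the same localization gives $x_4=-x_1^{-m}h_3\in(x_2,x_3)^3$, so $x_4\in I^{(3)}$ and both $\gr_C^1\OOO$ and $\gr_C^2\OOO$ involve only $x_2,x_3$; passing to $\mathfrak p$ shows $x_2^2,x_2x_3,x_3^2$ are $\OOO_C$-independent, so $\gr_C^2\OOO=S^2\gr_C^1\OOO$. For~(iii) and~(iv) I would specialize~(ii): if $x_3^2\in h_2$ the relation expresses $x_3^2$ through $x_4,x_2^2,x_2x_3$, so $A$ is generated over $\OOO_C[x_2,x_4]$ by $1$ and $x_3$; if $h_2=x_2x_3$ it gives $x_2x_3=-x_1^m x_4$, reducing each monomial to one of the form $x_4^c$, $x_2^a x_4^c$, or $x_3^b x_4^c$. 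In both cases freeness of the displayed $\OOO_C$-basis is checked by inverting $x_1$, where $A$ becomes $K[x_2,x_3]$ and the listed monomials are manifestly a basis.

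The delicate point is precisely the injectivity of $\psi$, i.e.\ the absence of relations in $\gr_C^{\bullet}\OOO$ beyond $h_0$. It rests entirely on $A$ being $x_1$-torsion-free, equivalently on $x_1^m x_4+h_2$ cutting out a variety that is well behaved along $x_1=0$; this is exactly where $h_2\neq 0$ is used, since it guarantees both that $x_4$ has symbolic order exactly $2$ and that inverting $x_1$ eliminates $x_4$ and collapses $A$ to the two-variable polynomial ring.
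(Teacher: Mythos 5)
Your proof is correct, but it is organized quite differently from what the paper indicates: the paper offers no written proof at all, only the remark that the proposition ``can be proved by induction on $n$'', i.e.\ by computing $I^{(n)}$ and $\gr_C^n\OOO$ degree by degree. You instead make a single global identification of the symbolic filtration with the weight filtration for $\wt(x_1,x_2,x_3,x_4)=(0,1,1,2)$, reduce everything to the statement that the initial ideal of $(h)$ is $(x_1^mx_4+h_2)$, and then kill the kernel of $\psi$ by combining ``$\ker\psi$ is $x_1$-torsion'' (from the generic-point computation) with ``$A$ has no $x_1$-torsion'' (from $x_1\nmid x_1^mx_4+h_2$ in the UFD $\OOO_C[x_2,x_3,x_4]$, which is exactly where $h_2\neq 0$ enters). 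This buys a uniform treatment of all degrees at once and makes transparent why \ref{proposition-lP=4-XC-1} is the degenerate case, at the cost of two points you should spell out. First, the surjectivity of $\psi$ is not ``evident'' from $I=(x_2,x_3,x_4)$ alone; the clean route is to deduce $W^n=I^{(n)}$ for all $n$ from injectivity: if $f\in I^{(n)}\setminus W^n$, take the largest $d<n$ with $f\in W^d$ (it exists since the weight filtration is separated), note that its initial class in $A_d$ maps to the class of $f$ in $I^{(d)}/I^{(d+1)}$, which vanishes because $f\in I^{(n)}\subseteq I^{(d+1)}$, and conclude $f\in W^{d+1}$, a contradiction; surjectivity is then automatic. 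Second, for the freeness claims in \ref{proposition-lP=4-XC-4} and \ref{proposition-lP=4-XC-5} the phrase ``manifestly a basis'' hides a small computation; it is cleaner to observe that when $x_3^2\in h_2$ the relation is monic of degree $2$ in $x_3$ over $\OOO_C[x_2,x_4]$, so $A$ is free with basis $1,x_3$ over that subring, and that when $h_2=x_2x_3$ the reduction $x_2x_3\mapsto -x_1^mx_4$ sets up a bijection between the listed monomials and all monomials $x_2^ix_3^j$ after inverting $x_1$. With these two points made explicit the argument is complete.
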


\begin{proposition}\label{proposition-cases-lP}
Assume that $\Ho^0(\gr_C^1\OOO)=0$. Then 
\begin{equation}
\label{equation-gr1CO}
\gr_C^1\OOO\simeq \OOO(-1)\oplus \OOO(-1)
\end{equation}
\textup(as an abstract sheaf\textup) and one of the following possibilities holds:
\begin{enumerate}
\item
$\Sing(X)=\{P\}$, $i_P(1)=3$, and $\ell(P)=7$ or $8$,
\item
$\Sing(X)=\{P,\, R\}$, where $R$ is a type \type{(III)} point,
$i_P(1)=2$, $i_R(1)=1$, and $\ell(P)=3$ or $4$.
\end{enumerate} 
\end{proposition}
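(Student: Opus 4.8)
The plan is to divide the argument into the bundle computation \eqref{equation-gr1CO}, a count of the singular points through the invariants $i_Q(1)$, and a congruence analysis that pins down $\ell(P)$. First I would prove \eqref{equation-gr1CO}. Since $(X,C)$ is an extremal curve germ with irreducible central fibre, $C\cong\PP^1$ and $\gr_C^1\OOO=I/I^{(2)}$ is a locally free $\OOO_C$-sheaf of rank $2$, hence splits as $\OOO(a)\oplus\OOO(b)$. By Lemma~\ref{lemma-grC}\ref{lemma-grC-3} one has $\Ho^1(\gr_C^1\OOO)=0$, forcing $a,b\ge-1$, while the hypothesis $\Ho^0(\gr_C^1\OOO)=0$ forces $a,b\le-1$. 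Thus $a=b=-1$, which gives \eqref{equation-gr1CO} and in particular $\deg\gr_C^1\OOO=-2$.

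Next I would turn this degree into information on $\Sing(X)$. The key tool is the numerical formula expressing $\deg\gr_C^1\OOO$ through the local invariants $i_Q(1)$ (see \cite{Mori-1988}, \cite{Kollar-Mori-1992}); the non-flipping property recorded in \ref{Set-up} and \ref{sde} supplies the cohomological vanishing needed to apply it, and together with $\deg\gr_C^1\OOO=-2$ it yields $\sum_{Q\in\Sing(X)}i_Q(1)=3$. By \ref{Set-up}, $P$ is the only non-Gorenstein point and $X$ has at most one further singular point $R$, necessarily Gorenstein; by the classification of local types \cite{Mori-1988} such an $R$ is of type \type{(III)} with $i_R(1)=1$. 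As $P$ is of type \type{(IIA)}, the formula $i_P(1)=\lfloor(\ell(P)+6)/4\rfloor$ of \ref{equation-iP} gives $i_P(1)\ge2$. Hence $\sum i_Q(1)=3$ forces either $\Sing(X)=\{P\}$ with $i_P(1)=3$, or $\Sing(X)=\{P,R\}$ with $i_P(1)=2$ and $i_R(1)=1$; this also gives the assertion that $X$ has at most one extra point of type \type{(III)}.

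It then remains to determine $\ell(P)$. Inverting $i_P(1)=\lfloor(\ell(P)+6)/4\rfloor$ gives $\ell(P)\in\{6,7,8,9\}$ when $i_P(1)=3$ and $\ell(P)\in\{2,3,4,5\}$ when $i_P(1)=2$, and two congruence restrictions cut these down. The residues $\ell(P)\equiv2\pmod4$, i.e. $\ell(P)=2,6$, occur for no \type{(IIA)} point: by \ref{equation-IIA-point} the linear term $y_1^{\ell(P)}y_j$ of $\alpha$ has weight $\equiv2\pmod4$, and since $\wt y_2,\wt y_3,\wt y_4=1,3,2$ no admissible $j$ exists when $\ell(P)\equiv2$. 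For the residue $\ell(P)\equiv1\pmod4$, i.e. $\ell(P)=5,9$, one has $j=2$, and I would rule it out using $\Ho^0(\gr_C^1\OOO)=0$: when $j=2$ the two surviving directions of $\gr_C^1\OOO$ at $P$ are the weight-$3$ and weight-$2$ coordinates $y_3,y_4$, whose lowest $\muu_4$-invariant lifts $y_1y_3,\,y_1^2y_4$ vanish along $C$ only to orders $1$ and $2$ (as opposed to order $3$ for the weight-$1$ coordinate $y_2$, which survives in the cases $\ell(P)\equiv0,3\pmod4$); combining the local description of $\gr_C^\bullet\OOO$ in Proposition~\ref{proposition-lP=4-XC} with the global analysis of \cite{Mori-Prokhorov-IIA-1}, this low vanishing forces a nonzero section of $\gr_C^1\OOO$, contradicting the hypothesis. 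Discarding $\ell(P)\equiv1,2\pmod4$ leaves $\ell(P)\in\{7,8\}$ in the first case and $\ell(P)\in\{3,4\}$ in the second.

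The hard part will be the exclusion of $\ell(P)\equiv1\pmod4$. In contrast to the residue $2$, this residue is realised by genuine \type{(IIA)} points, so it can be eliminated only through the global hypothesis $\Ho^0(\gr_C^1\OOO)=0$ (equivalently the non-normality of $H$); the degree count is not enough, and the argument must play the local shape of $\gr_C^1\OOO$ at $P$, which depends on $\ell(P)\bmod4$, against the forced global splitting $\OOO(-1)^{\oplus2}$.
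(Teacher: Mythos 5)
Your first two steps track the paper: the splitting $\gr_C^1\OOO\simeq\OOO(-1)^{\oplus2}$ follows exactly as you say from $\Ho^0=0$ together with Lemma~\ref{lemma-grC}\ref{lemma-grC-3}, and the count $\sum_Q i_Q(1)=3$ from the degree formula of \cite[(2.3.2)]{Mori-1988}, \cite[(3.1.2), (4.4.3)]{Mori-Prokhorov-2008}, combined with $i_P(1)\ge 2$ and $i_R(1)\ge 1$, yields the two alternatives for $\Sing(X)$ and the ranges $\ell(P)\in\{7,8,9\}$, resp.\ $\{3,4,5\}$, once $\ell(P)\equiv2\pmod4$ is discarded by the weight argument (which is correct, and is the content of \cite[2.16]{Mori-1988}). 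The paper excludes $i_R(1)>1$ by a deformation at $R$ rather than by the degree formula, but your route is equivalent.

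The genuine gap is the exclusion of $\ell(P)\equiv1\pmod4$, i.e.\ $\ell(P)=5$ or $9$, and your proposed mechanism does not work. When $j=2$ the $\ell$-basis of $\gr_C^1\OOO$ at $P$ is $y_3,y_4$ with weights $3,2$, so the $\ell$-structure would be $(-1+P^\sharp)\toplus(-1+2P^\sharp)$; this is a perfectly consistent $\ell$-sheaf whose underlying sheaf is $\OOO(-1)^{\oplus2}$ and which has no global sections. The local weights at $P$ only constrain the ``fractional parts'' of the summands, never the integer degrees $a_1,a_2$, so no choice of weights can ``force a nonzero section'' once $a_1=a_2=-1$; and Proposition~\ref{proposition-lP=4-XC} concerns $\gr_C^2\OOO$ and higher, so it contributes nothing here. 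The paper's actual argument is global and goes through deformation theory: since $j=2$ means the distinguished linear term of $\alpha$ is $y_1^{\ell(P)}y_2$, the perturbation $\alpha_t=\alpha+ty_1y_2$ produces a nearby extremal curve germ $(X_t,C_t)$ whose \type{(IIA)} point $P_t$ has $\ell(P_t)=1$, hence $i_{P_t}(1)=1$; conservation of $\sum_Q i_Q(1)=3$ then forces two type \type{(III)} points on $X_t$, which is impossible for a germ with a \type{(IIA)} point by \cite[7.4.1]{Kollar-Mori-1992} and \cite[9.1]{Mori-Prokhorov-2008}. Without this deformation step (or some other genuinely global input) the cases $\ell(P)=5,9$ remain open in your write-up; note that $\ell(P)=5$ does occur for actual germs (see \ref{example-conic-bundle-normal-H}), just with $\Ho^0(\gr_C^1\OOO)\neq0$, so whatever argument you use must exploit the hypothesis in an essentially non-local way.
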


\begin{proof}
Write $\gr_C^1\OOO\simeq \OOO(a_1)\oplus \OOO(a_2)$
for some $a_1$, $a_2$.
Since $\Ho^0(\gr_C^1\OOO)=0$, we have $a_1, a_2<0$.
On the other hand, $\Ho^1(\gr_C^1\OOO)=0$ (see Lemma \ref{lemma-grC}\ref{lemma-grC-3}).
Hence, $a_1=a_2=-1$. Recall that $\ell(P)\not\equiv 2\mod 4$.

Consider the case where $P$ is the only singular point of $X$.
Then $i_P(1)=3$ by 
\cite[(2.3.2)]{Mori-1988} and \cite[(3.1.2), (4.4.3)]{Mori-Prokhorov-2008}.
According to \cite[2.16]{Mori-1988} we have $7\le \ell(P)\le 9$.
Assume that $\ell(P)=9$. Then using a deformation of the form
$\alpha_t=\alpha+t y_1y_2$ (see \eqref{equation-alpha}), we get a germ $(X_t,C_t)$
having a point $P_t$ of type \type{(IIA)} with $\ell(P_t)=1$
and two type \type{(III)} points. This is impossible by \cite[7.4.1]{Kollar-Mori-1992} and
\cite[9.1]{Mori-Prokhorov-2008}.

Suppose $\Sing(X)\neq \{P\}$.
Then by \cite[6.7]{Mori-1988} and \cite[8.6, 9.1]{Mori-Prokhorov-2008}
we have $\Sing(X)=\{P,\, R\}$, where $R$ is a type \type{(III)} point.
If $i_R(1)>1$, then by using deformation at $R$ we obtain
an extremal curve germ with one point of type \type{(IIA)}
and at least two points of type \type{(III)}.
This is impossible again by \cite[6.7]{Mori-1988} and \cite[9.1]{Mori-Prokhorov-2008}.
Therefore, $i_R(1)=1$ and so $i_P(1)=2$.
By \cite[2.16]{Mori-1988} we have $3\le \ell(P)\le 5$
Assume that $\ell(P)=5$.
Using a deformation of the form
$\alpha_t=\alpha+t y_1y_2$, we obtain a germ $(X_t,C_t)$
having a point $P_t$ with $\ell(P_t)=1$
and two type \type{(III)} points. This is impossible by \cite[7.4.1]{Kollar-Mori-1992} and
\cite[9.1]{Mori-Prokhorov-2008}.
\end{proof}

\begin{slemma}
\label{lemma-lP=3+IIIa}
If $\Ho^0(\gr_C^1\OOO)=0$, then 
\begin{equation*}
\gr_C^2\OOO \simeq \OOO(a_1)\oplus\OOO(a_2)\oplus\OOO(a_3), 
\end{equation*}
\textup(as an abstract sheaf\textup) with $a_i\ge -1$ and $\max \{a_1,\, a_2,\, a_3\}\ge 0$.
\end{slemma}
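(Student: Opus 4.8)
The plan is to use that $\gr_C^2\OOO$ is a rank-$3$ vector bundle on $C\cong\PP^1$, so that it splits as $\OOO(a_1)\oplus\OOO(a_2)\oplus\OOO(a_3)$ and both assertions become statements about the $a_i$. The lower bound is immediate: by Lemma \ref{lemma-grC}\ref{lemma-grC-3} the hypothesis $\Ho^0(\gr_C^1\OOO)=0$ forces $\Ho^1(\gr_C^2\OOO)=0$, and on $\PP^1$ a sum of line bundles has vanishing $\Ho^1$ precisely when every summand has degree $\ge -1$. Hence $a_i\ge -1$ for all $i$.

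It remains to prove $\max_i a_i\ge 0$, equivalently $\Ho^0(\gr_C^2\OOO)\ne 0$. Since $\Ho^1(\gr_C^2\OOO)=0$, Riemann--Roch on $\PP^1$ gives $\Ho^0(\gr_C^2\OOO)=\chi(\gr_C^2\OOO)=\deg\gr_C^2\OOO+3$, so the statement is equivalent to $\deg\gr_C^2\OOO\ge -2$. To reach this degree I would use the symmetrized multiplication map $S^2\gr_C^1\OOO\to\gr_C^2\OOO$, which is an isomorphism over the locus where $X$ is smooth, because there $\gr_C^{\bullet}\OOO$ is the polynomial algebra on $\gr_C^1\OOO$. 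By Proposition \ref{proposition-cases-lP} we have $\gr_C^1\OOO\simeq\OOO(-1)^{\oplus 2}$, hence $S^2\gr_C^1\OOO\simeq\OOO(-2)^{\oplus 3}$, and the map is a generically injective---thus injective---morphism of rank-$3$ bundles. Writing $T$ for its cokernel, a torsion sheaf supported on $\Sing(X)\cap C$, one gets $\deg\gr_C^2\OOO=-6+\len T$, so everything reduces to the length bound $\len T\ge 4$.

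The length $\len T$ is the sum of the local contributions $\len_P T$ and, in case (ii) of Proposition \ref{proposition-cases-lP}, $\len_R T$; computing these is the substance of the proof. At $P$ I would pass to the index-one cover $X^{\sharp}=\{\alpha=0\}$ of \ref{equation-IIA-point} and read off $\gr_{C^{\sharp}}^{\bullet}\OOO_{X^{\sharp}}$ from the equation $\alpha$ by Proposition \ref{proposition-lP=4-XC}; here the two residue classes behave differently, since by \eqref{equation-alpha} one has $\alpha\equiv y_1^{\ell(P)}y_j$ with $j=4$ when $\ell(P)\equiv 0$ and $j=3$ when $\ell(P)\equiv 3\bmod 4$ (in the latter case $y_3$ already lies in the second symbolic power of the ideal of $C^{\sharp}$, so it is a degree-$2$ rather than a degree-$1$ generator). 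Taking $\muu_4$-invariants, with $\OOO_C=\CC\{y_1^4\}\subset\CC\{y_1\}=\OOO_{C^{\sharp}}$, one expresses the map $S^2\gr_C^1\OOO\to\gr_C^2\OOO$ at $P$ as an explicit $3\times 3$ matrix over $\CC\{y_1^4\}$ whose entries are powers of $y_1^4$ governed by the weights $(1,1,3,2)$ and by the relation $\alpha=0$; then $\len_P T$ is the order of vanishing of its determinant. Carrying this out yields $\len_P T=4,\,3,\,5,\,4$ for $\ell(P)=3,\,4,\,7,\,8$, respectively.

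The main obstacle is the uniformity of the bound, concentrated in the borderline case $\ell(P)=4$. There $\len_P T=3$ only, so $\len T\ge 4$ forces one to show that the extra \type{(III)} point $R$ occurring in case (ii) contributes $\len_R T\ge 1$; establishing this requires a separate (if easier) local analysis at the Gorenstein point $R$, and one must be careful that the $\muu_4(1,1,3,2)$-invariant bookkeeping at $P$---in particular the effect of the higher-order terms of $\alpha$ on the pivot exponents---is done correctly in each residue class. Once $\len T\ge 4$ is secured in all four cases, the inequality $\deg\gr_C^2\OOO\ge -2$ together with the splitting yields $\max_i a_i\ge 0$, completing the proof.
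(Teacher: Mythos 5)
Your route is genuinely different from the paper's. The paper's proof is two lines: by \cite[Lemma 3.1.1]{Mori-Prokhorov-IIA-1} there is a section $\beta\in\Ho^0(I)$ containing $y_4^2$ and $y_2y_3$ at $P^\sharp$; since $\Ho^0(\gr_C^1\OOO)=0$ this $\beta$ lies in $\Ho^0(I^{(2)})$ and its image in $\gr_C^2\OOO$ is visibly non-zero, so $\Ho^0(\gr_C^2\OOO)\neq 0$, which together with $\Ho^1(\gr_C^2\OOO)=0$ gives both assertions. You instead reduce $\max a_i\ge 0$ to $\deg\gr_C^2\OOO\ge -2$ and compute the degree via the cokernel $T$ of $S^2\gr_C^1\OOO\hookrightarrow\gr_C^2\OOO$. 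That reduction is correct, and your local lengths at $P$ check out: with the invariant generators $y_1^3y_2$, $y_1y_3$ (resp.\ $y_1^3y_2$, $y_1^2y_4$ when $\ell(P)\equiv 3$) one indeed finds $\len_PT=4,3,5,4$ for $\ell(P)=3,4,7,8$. (Note these differ from the cokernel lengths the paper records for $\varphi$, because the paper uses the $\ell$-symmetric power $\tilde S^2$, of degree $-4$, not the plain $S^2$ of the invariant sheaf, of degree $-6$; you have correctly accounted for this.)

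The gap is in the case $\ell(P)=4$, exactly where you place "the main obstacle," but your proposed fix does not work. You need $\len_RT\ge 1$, which by Proposition \xref{proposition-lP=4-XC}\xref{proposition-lP=4-XC-1} is equivalent to $q_2\neq 0$ in the local equation \eqref{equation-lP=4-gamma} at the type \type{(III)} point $R$. This is \emph{not} a local condition that can be verified by analysis at $R$: a type \type{(III)} point with $i_R(1)=1$ only forces $z_1z_4\in\gamma$, and $q_2=0$ is locally perfectly possible (compare \eqref{equation-beta-lP=3+III-1}, where the paper explicitly allows $\gamma_2=0$ and only asserts $\len_R\coker_R\varphi\le 1$). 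In fact the paper establishes $q_2\neq 0$ in Section \xref{section-lP=4} precisely as a \emph{consequence} of this lemma, so your argument is circular at this point: if $q_2=0$ one gets $\len T=3$, $\gr_C^2\OOO\simeq\OOO(-1)^{\oplus 3}$, and no contradiction from degree considerations alone. Some global input is unavoidable here, and the cheapest one is the paper's: the general $\beta\in\Ho^0(I)$ lies in $I^{(2)}$ and contains $y_2y_3$, hence has non-zero image in $\Ho^0(\gr_C^2\OOO)$, ruling out $\OOO(-1)^{\oplus 3}$. With that ingredient added, your degree computation becomes a (longer) correct proof; without it, the case $\ell(P)=4$ is not closed.
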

\begin{proof}
If $\Ho^0(\gr_C^1\OOO)=0$, then the general member $H\in |\OOO_X|_C$ is singular along $C$. 
According to \cite[Lemma 3.1.1]{Mori-Prokhorov-IIA-1}
there exists a section $\beta\in \Ho^0(I)$ containing $y_4^2$ and $y_2y_3$
at $P^\sharp$. Therefore, $\beta\in \Ho^0(I^{(2)})$ and the image 
$\bar\beta$ 
of $\beta$ in $\Ho^0(\gr_C^2\OOO)$ is non-zero.
In particular, $\Ho^0(\gr_C^2\OOO)\neq 0$. By Lemma \ref{lemma-grC}\ref{lemma-grC-3}
we have $\Ho^1(\gr_C^2\OOO)=0$ and the assertion follows.
\end{proof}

\section{Cases $\ell(P)=3$ and $7$}
\label{section-lP=3-III}
In this section we assume that $\ell(P)\in \{3,\, 7\}$.
It will be shown that Computation \ref{computation-lP=3a-III} is applicable here
and the possibility \ref{main-theorem-divisorial} occurs.

\begin{case}\label{notation-lP=3+III}
By Proposition \ref{proposition-cases-lP} in the case $\ell(P)=3$ the variety
$X$ has a type \type{(III)} point $R$ with $i_R(1)=1$ and 
$X$ is smooth outside $P$ in the case $\ell(P)=7$.
According to \ref{equation-IIA-point} the equation of $X$ at $P$ has the form
\begin{equation}\label{equation-alpha-lP=3-and-7}
\alpha=y_1^{\ell(P)}y_3+y_2^2+y_3^2+\delta y_4^{2k+1}+c y_1^2y_4^2+\epsilon y_1y_3y_4+\xi y_1^3y_2y_4
+\cdots=0.
\end{equation}
Thus
\begin{equation}\label{equation-alpha-lP=3-and-7-mod}
\alpha\equiv y_1^{\ell(P)}y_3+y_2^2
\mod (y_2y_4,\,  y_4^2)+ I^{(3)},\qquad y_3\in I^{(2)}.
\end{equation}
\end{case}

\begin{scase}
In the case $\ell(P)=3$
by \cite[Lemma 2.16]{Mori-1988}, since $i_R(1)=1$,
the equation of $X$ at $R$ has the form
\begin{equation}\label{equation-beta-lP=3+III-1}
\gamma=z_1z_3+\gamma_2(z_2,z_4)+\gamma_3(z_1,\dots,z_4),
\end{equation}
where $\gamma_2$ is a quadratic form, $\gamma_3\in (z_2,z_4)^3+(z_2,z_4)z_3+(z_3)^2$,
and $C$ is the $z_1$-axis.
\end{scase}
\begin{scase}
According to \eqref{equation-gr1CO}, since
$y_4$ and $y_2$ form an $\ell$-free $\ell$-basis of $\gr^1_C\OOO$ at $P$, we have the following $\ell$-isomorphism
\begin{equation}
\label{equation-(7.4.1.1)-lP=3+III}
\vcenter{
\xymatrix@R=6pt@C=-3pt{
\gr_C^1\OOO= &(-1+3P^\sharp)\ar@{=}[d]&\toplus& (-1+2P^\sharp).\ar@{=}[d]
\\
& \AAA && \BBB
}}
\end{equation}
We choose the coordinates
$y_1,\dots, y_4$ at $P$ keeping $y_1$ and $y_3$ the same
so that
$y_2$ is an $\ell$-basis of $\AAA$
and
$y_4$ is an $\ell$-basis of $\BBB$.
\end{scase}

\begin{sremark}\label{remarkProposition-lP=3+IIIa}
By \eqref{equation-alpha-lP=3-and-7-mod} the semi-invariants
$y_4^2$, $y_2y_4$, $y_3$ form an $\ell$-basis of $\gr_C^2\OOO$.
\end{sremark}

\begin{lemma}\label{lemma-possibilities-lP=3+III}
{}For $\gr_C^2\OOO$, one of the following possibilities holds
\begin{numcases}{\gr_C^2\OOO=}
(a)\toplus (-1+P^\sharp)^{\toplus 2},\quad a=0,\ 1 
\label{equation-possibilities-lP=3+III-a}
\\
(P^\sharp)\toplus (0)\toplus (-1+P^\sharp), 
\label{equation-possibilities-lP=3+III-b}
\\
\VVV\toplus (-1), 
\label{equation-possibilities-lP=3+III-c}
\end{numcases}
where $\VVV$ is some $\ell$-sheaf.
\end{lemma}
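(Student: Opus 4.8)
The plan is to read off the local $\ell$-structure of $\gr_C^2\OOO$ at $P$ from the equation \eqref{equation-alpha-lP=3-and-7} and then feed it into the global cohomological constraints already recorded. First I would invoke Remark \ref{remarkProposition-lP=3+IIIa}: the semi-invariants $y_4^2$, $y_2y_4$, $y_3$ form an $\ell$-basis of $\gr_C^2\OOO$ at $P$. Their $\muu_4(1,1,3,2)$-weights are $0$, $3$, $3$, so in the normalization of \eqref{equation-(7.4.1.1)-lP=3+III} their local weights at $P^\sharp$ are $0$, $1$, $1$; thus, as an $\ell$-sheaf, $\gr_C^2\OOO$ has a weight-$0$ part of rank $1$ and a weight-$1$ part of rank $2$. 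By Lemma \ref{lemma-lP=3+IIIa} its underlying sheaf is $\OOO(a_1)\oplus\OOO(a_2)\oplus\OOO(a_3)$ with each $a_i\ge-1$ and $\max_i a_i\ge 0$.

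Next I would split off the weight-$0$ summand. By the same argument that produces the decomposition \eqref{equation-(7.4.1.1)-lP=3+III} of $\gr_C^1\OOO$ — the $\muu_4$-weight vector $y_4^2$ generates a weight-$0$ cyclic sub-$\ell$-sheaf near $P$ — one obtains an $\ell$-splitting $\gr_C^2\OOO\simeq (a)\toplus\VVV$, where $(a)$ is the weight-$0$ $\ell$-line bundle and $\VVV$ is a weight-$1$ rank-$2$ $\ell$-sheaf whose underlying sheaf is $\OOO(b)\oplus\OOO(c)$ with $b,c\ge-1$. The whole problem is then reduced to determining the degree $a$ together with the pair $(b,c)$, subject to $\max(a,b,c)\ge0$.

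The decisive point is the bound $\deg\gr_C^2\OOO=a+b+c\le-1$. I would establish it by computing the $\OOO_C$-algebra $\gr_C^{\bullet}\OOO$ near $P$ from \eqref{equation-alpha-lP=3-and-7} exactly as in Proposition \ref{proposition-lP=4-XC}: the relation $y_2^2\equiv-y_1^{\ell(P)}y_3$ shows $\gr_C^{\bullet}\OOO$ is generated over $\OOO_C$ by $y_2,y_4$ in degree $1$ and the single new generator $y_3$ in degree $2$, and I would glue this to the structure at the remaining singular point — the type \type{(III)} point $R$, via \eqref{equation-beta-lP=3+III-1}, when $\ell(P)=3$, or to the smooth locus when $\ell(P)=7$ — together with $\Ho^1(\gr_C^2\OOO)=0$ from Lemma \ref{lemma-grC}\ref{lemma-grC-3}. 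Granting $a+b+c\le-1$, the enumeration is immediate: since $b,c\ge-1$ we get $a\le1$; if $a=1$ then $b=c=-1$, which is \eqref{equation-possibilities-lP=3+III-a} with $a=1$; if $a=0$ then $b+c\le-1$ forces $(b,c)\in\{(-1,-1),(0,-1)\}$, giving \eqref{equation-possibilities-lP=3+III-a} with $a=0$ or, after checking that the weight-$1$ part $\OOO(0)\oplus\OOO(-1)$ splits ($\Ho^1(\OOO(1))=0$), \eqref{equation-possibilities-lP=3+III-b}; and if $a=-1$ then $\max(b,c)\ge0$ and, because the two weight-$1$ degrees may differ by $\ge2$, the extension class need not vanish, so $\VVV$ is left unspecified, which is \eqref{equation-possibilities-lP=3+III-c}. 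The main obstacle is exactly this degree bound: everything else is formal, and it is in transferring the local algebra at $P$ and $R$ into the global inequality $\deg\gr_C^2\OOO\le-1$ that the hypotheses $\ell(P)\in\{3,7\}$ and $\Ho^0(\gr_C^1\OOO)=0$ genuinely enter.
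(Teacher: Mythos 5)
Your proposal follows essentially the same route as the paper: the $\ell$-basis $y_4^2,\ y_2y_4,\ y_3$ of $\gr_C^2\OOO$ at $P$ from Remark \ref{remarkProposition-lP=3+IIIa}, the constraints $a_i\ge -1$, $\max a_i\ge 0$ from Lemma \ref{lemma-lP=3+IIIa}, and a degree bound obtained by comparing $\gr_C^2\OOO$ with $\tilde S^2\gr_C^1\OOO$ locally at $P$ and $R$; your enumeration from $\deg\gr_C^2\OOO\le -1$ then matches the paper's list. The one step you only announce rather than carry out is that degree bound itself, which the paper establishes by computing $\coker\varphi$ for the natural map $\varphi:\tilde S^2\gr_C^1\OOO\to\gr_C^2\OOO$: the relation $y_1^2y_2^2\equiv(\unit)\cdot y_1^{\ell(P)+1}\cdot y_1y_3$ gives $\coker_P\varphi=\OOO_C/\bigl(y_1^{\ell(P)+1}\bigr)\cdot y_1y_3$, while $\len_R\coker_R\varphi\le 1$ when $\ell(P)=3$, and adding these lengths to $\deg\tilde S^2\gr_C^1\OOO=-3$ yields exactly the inequality you assert.
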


\begin{proof}
Consider the natural map
\begin{equation}
\label{equation-varphi}
\varphi: \tilde S^2 \gr_C^1\OOO=
\AAA^{\totimes 2} 
\toplus (\AAA\totimes\BBB)\toplus
\BBB^{\totimes 2}
\xrightarrow{\hspace*{25pt}}\gr_C^2\OOO,
\end{equation}
where
\begin{equation*}
\AAA^{\totimes 2}= (-1+2P^\sharp),\quad
\AAA\totimes\BBB= (-1+P^\sharp),\quad
\BBB^{\totimes 2}= (-1).
\end{equation*}
$\ell$-bases of these $\ell$-sheaves at $P$ are
$y_2^2$,\, 
$y_2y_4$,\, 
$y_4^2$, and  respectively.
By Remark \ref{remarkProposition-lP=3+IIIa}
we see that
an $\ell$-basis of $\gr_C^2\OOO$ can be taken as
$y_4^2,\, y_2y_4,\, y_3$.
According to \eqref{equation-alpha-lP=3-and-7-mod}
we have $y_1^2y_2^2\equiv (\unit)\cdot y_1^{\ell(P)+1}\cdot y_1y_3$. 
Hence, 
\begin{equation}
\label{equation-lP=3-7-Coker-P}
\coker_P \varphi=\OOO_C \cdot y_1y_3/\OOO_C\cdot y_1^2y_2^2=
\OOO_C/\bigl(y_1^{\ell(P)+1}\bigr)\cdot y_1y_3
\end{equation}
and $\coker_P \varphi^{\sharp}= \OOO_{C^{\sharp}}/\bigl(y_1^{\ell(P)}\bigr)\cdot y_3$.
In particular, 
$\len _P\coker_P \varphi=(\ell(P)+1)/2$.
If $\ell(P)=3$, then 
\begin{equation*}
\coker_R \varphi =
\begin{cases}
\OOO_C/(z_1)\cdot z_3 & \text{if $\gamma_2\neq 0$,} 
\\
0 & \text{if $\gamma_2= 0$,} 
\end{cases}
\end{equation*}
In particular,
$\len_R \coker_R \varphi\le 1$.
By  Lemma \ref{lemma-lP=3+IIIa} one of the following holds
\begin{equation*}
\gr_C^2\OOO\simeq
\OOO(-1)^{\oplus 2}\oplus\OOO(1),\quad
\OOO(-1)^{\oplus 2}\oplus\OOO,\quad\text{or}\quad
\OOO^{\oplus 2}\oplus\OOO(-1).
\end{equation*}
By Remark \ref{remarkProposition-lP=3+IIIa} we get the only possibilities listed in
Lemma \xref{lemma-possibilities-lP=3+III}.
\end{proof}

\begin{lemma}\label{treating-equation-possibilities-lP=3+III-c}
The case \eqref{equation-possibilities-lP=3+III-c} does not occur.
\end{lemma}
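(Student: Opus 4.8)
The plan is to produce a contradiction from the global section of $\gr_C^2\OOO$ supplied by the proof of Lemma \ref{lemma-lP=3+IIIa}. By \cite[Lemma 3.1.1]{Mori-Prokhorov-IIA-1} there is a section $\beta\in\Ho^0(I)$ whose expansion at $P^\sharp$ contains $y_4^2$ and $y_2y_3$ with unit coefficients; as recorded there, $\beta\in\Ho^0(I^{(2)})$ and its class $\bar\beta\in\Ho^0(\gr_C^2\OOO)$ is nonzero. First I would pin down the value of $\bar\beta$ at $P$: since $y_2\in\gr_C^1\OOO$ and $y_3\in\gr_C^2\OOO$, the term $y_2y_3$ has degree $3$ and therefore dies in $\gr_C^2\OOO=I^{(2)}/I^{(3)}$, whereas $y_4^2$ has degree $2$ and survives. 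Hence $\bar\beta|_P=(\unit)\cdot y_4^2$ modulo the remaining basis vectors $y_2y_4,\,y_3$; in particular the $y_4^2$-component of $\bar\beta|_P$ is nonzero.

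The second step is to locate the summand $(-1)$ of \eqref{equation-possibilities-lP=3+III-c} within the $\muu_4$-weight decomposition at $P$. Under $\muu_4(1,1,3,2)$ the $\ell$-basis $y_4^2,\,y_2y_4,\,y_3$ of $\gr_C^2\OOO$ (Remark \ref{remarkProposition-lP=3+IIIa}) has weights $0,\,3,\,3$. In the normalisation of \eqref{equation-(7.4.1.1)-lP=3+III} the $P^\sharp$-multiplicity of a weight-$w$ generator equals $-w\bmod 4$ (compare $y_2$ of weight $1$ sitting in $(-1+3P^\sharp)$ and $y_4$ of weight $2$ sitting in $(-1+2P^\sharp)$). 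Thus the untwisted summand $(-1)=(-1+0\cdot P^\sharp)$ corresponds to a $\muu_4$-invariant generator, and the unique weight-$0$ element of the $\ell$-basis is $y_4^2$. Any $\ell$-splitting $\gr_C^2\OOO=\VVV\toplus(-1)$ is compatible with this weight grading, so the factor $(-1)$ is the line spanned by $y_4^2$ at $P$, while $\VVV$ carries the weight-$3$ generators $y_2y_4,\,y_3$.

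Combining the two steps gives the contradiction. Under the splitting write $\bar\beta=(\bar\beta_\VVV,\,\bar\beta_{(-1)})$; then $\bar\beta_{(-1)}\in\Ho^0\bigl((-1)\bigr)=\Ho^0(\OOO(-1))=0$, so $\bar\beta_{(-1)}=0$ and in particular its value at $P$ vanishes. But this value is exactly the $y_4^2$-component of $\bar\beta|_P$, which we just showed to be nonzero. This rules out \eqref{equation-possibilities-lP=3+III-c}. Note that the argument is entirely local at $P$, so it disposes of both $\ell(P)=3$ and $\ell(P)=7$ at once and is insensitive to the point $R$.

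The step I expect to require the most care is the second one: justifying that the abstract decomposition recorded in \eqref{equation-possibilities-lP=3+III-c} is genuinely an $\ell$-splitting adapted to the $\muu_4$-eigenspaces at $P$, so that the factor labelled $(-1)$ really is the weight-$0$ line $\CC\cdot y_4^2$ and cannot be some mixture of $y_4^2$ with the weight-$3$ generators. Once the weight bookkeeping is in place, the remaining ingredients — the existence and shape of $\bar\beta$ and the vanishing $\Ho^0(\OOO(-1))=0$ — are immediate.
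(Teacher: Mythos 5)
Your proof is correct, but it follows a genuinely different route from the paper's. The paper disposes of case \eqref{equation-possibilities-lP=3+III-c} by a cohomological computation with the dualizing sheaf: the exact sequence $0\to\gr_C^1\omega\to\omega/F^2\omega\to\omega/F^1\omega\to 0$ gives $\chi(\omega/F^{2}\omega)=0$, and then \cite[Lemma 3.7(ii)]{Mori-Prokhorov-IIA-1} applied with $\KKK=I^{(2)}$ yields the contradiction. You instead exploit the explicit section $\bar\beta\in\Ho^0(\gr_C^2\OOO)$ from the proof of Lemma \ref{lemma-lP=3+IIIa}: since $y_2y_3\in I^{(3)}$ (as $y_3\in I^{(2)}$ by \eqref{equation-alpha-lP=3-and-7-mod}), the $y_4^2$-coordinate of $\bar\beta$ in the $\ell$-basis $y_4^2,\,y_2y_4,\,y_3$ is a unit at $P^\sharp$; and because the weight-$0$ part of the fibre of $(\gr_C^2\OOO)^\sharp$ is the line $\CC\cdot y_4^2$, any $\ell$-splitting forces the summand $(-1)=(-1+0\cdot P^\sharp)$ to be generated by $(\unit)\,y_4^2+y_1(\cdots)$ while invariant sections of $\VVV$ have $y_4^2$-coordinate divisible by $y_1^4$ --- so the projection of $\bar\beta$ to $(-1)$ would be a nonzero element of $\Ho^0(\OOO(-1))=0$. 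This is the same mechanism the paper uses at the start of the proof of Lemma \ref{lemma-equation-possibilities-lP=3+IIIa} (``$\beta$ must be a global section of $(a)$''), transplanted to case \eqref{equation-possibilities-lP=3+III-c}; the weight bookkeeping you worried about is indeed the crux, and it closes because the weight-$0$ subspace of the fibre is one-dimensional. Your approach buys independence from $\omega$ and from the external Lemma 3.7(ii); the paper's buys brevity and uniformity with the machinery of \cite{Mori-Prokhorov-IIA-1}.
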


\begin{proof}
Indeed, from the exact sequence
\begin{equation*}
0\xrightarrow{\hspace*{20pt}} \gr_C^1\omega\xrightarrow{\hspace*{20pt}} \omega /F^{2}\omega
\xrightarrow{\hspace*{20pt}} \omega/ F^1 \omega\xrightarrow{\hspace*{20pt}} 0,
\end{equation*}
we obtain $\chi(\omega /F^{2}\omega)=0$. Then we apply
\cite[Lemma 3.7(ii)]{Mori-Prokhorov-IIA-1} with $\KKK=I^{(2)}$.
\end{proof}

\begin{lemma}
\label{lemma-equation-possibilities-lP=3+IIIa}
The case \eqref{equation-possibilities-lP=3+III-a} does not occur.
\end{lemma}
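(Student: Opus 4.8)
The plan is to argue by contradiction and to pin down the splitting type of $\gr_C^2\OOO$ directly from the multiplication map $\varphi$ of \eqref{equation-varphi}, refining the computation already carried out in the proof of Lemma \ref{lemma-possibilities-lP=3+III}. Suppose $\gr_C^2\OOO$ were of the form \eqref{equation-possibilities-lP=3+III-a}. As $\varphi$ is a generically bijective morphism of locally free $\ell$-sheaves of rank $3$, it is injective, so $\operatorname{im}\varphi\cong\tilde S^2\gr_C^1\OOO=\AAA^{\totimes2}\toplus(\AAA\totimes\BBB)\toplus\BBB^{\totimes2}$ and $\coker\varphi$ is the torsion sheaf of \eqref{equation-lP=3-7-Coker-P}. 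Thus $\gr_C^2\OOO$ is the saturation of $\operatorname{im}\varphi$, and the question is which summand absorbs the degree carried by $\coker\varphi$.

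The key step is to read this off summand by summand. By Remark \ref{remarkProposition-lP=3+IIIa}, $\varphi$ carries the $\ell$-basis $y_4^2,\,y_2y_4$ of $\BBB^{\totimes2}\toplus(\AAA\totimes\BBB)$ to $\ell$-basis elements of $\gr_C^2\OOO$; hence at $P$ these two components of $\varphi$ are isomorphisms, and the whole local cokernel sits in the remaining $y_3$-direction, into which $\AAA^{\totimes2}$ maps by $y_2^2\mapsto-y_1^{\ell(P)}y_3$, exactly as in \eqref{equation-lP=3-7-Coker-P}. Consequently the saturation enlarges only the summand containing $y_3$, by the length $(\ell(P)+1)/2$ of $\coker_P\varphi$ (together with $\coker_R\varphi$ when $\ell(P)=3$). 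Performing this degree count in the $\ell$-category, I expect the positive jump to be forced into the $y_3$-summand, producing exactly \eqref{equation-possibilities-lP=3+III-b}; in \eqref{equation-possibilities-lP=3+III-a}, by contrast, the extra positivity would have to sit in the weight-$0$ ($y_4^2$) summand, which the isomorphisms above forbid. This is what excludes \eqref{equation-possibilities-lP=3+III-a}.

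For $\ell(P)=7$ the argument is clean: since $\Sing X=\{P\}$, the map $\varphi$ is an isomorphism off $P$ and $\coker\varphi$ is supported at $P$ in the single $y_3$-direction, so the weight-$0$ summand of $\gr_C^2\OOO$ is rigidly determined by $\BBB^{\totimes2}$ and cannot acquire the degree of the summand $(1)$ in \eqref{equation-possibilities-lP=3+III-a}. The case $\ell(P)=3$ is where I expect the main difficulty, owing to the extra Gorenstein point $R$: one must use the equation \eqref{equation-beta-lP=3+III-1} of $(X,R)$ to show that $\coker_R\varphi$ again lies in the $y_3$-direction—the summand already being enlarged at $P$—rather than in the weight-$0$ summand, so that the two local contributions cannot conspire to realize the degree distribution of \eqref{equation-possibilities-lP=3+III-a}. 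The delicate point throughout is the $\ell$-sheaf degree bookkeeping: the abstract degrees of $\AAA^{\totimes2}$, $\AAA\totimes\BBB$, $\BBB^{\totimes2}$ and of the saturated summands must be computed correctly, since it is precisely their comparison with $(\ell(P)+1)/2$ that separates \eqref{equation-possibilities-lP=3+III-a} from \eqref{equation-possibilities-lP=3+III-b}.

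Alternatively, mirroring Lemma \ref{treating-equation-possibilities-lP=3+III-c}, one can work with the dualizing sheaf: the same exact sequence yields $\chi(\omega/F^2\omega)=0$, and feeding this into \cite[Lemma 3.7]{Mori-Prokhorov-IIA-1} with $\KKK=I^{(2)}$ should be incompatible with the two $\OOO(-1)$-summands that \eqref{equation-possibilities-lP=3+III-a} forces into $\gr_C^2\OOO$. Either route ultimately reduces to the same $\ell$-structure computation.
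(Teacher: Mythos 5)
Your central claim --- that because $\varphi$ sends the $\ell$-bases $y_4^2$ and $y_2y_4$ to $\ell$-basis elements of $\gr_C^2\OOO$, the cokernel ``sits in the $y_3$-direction'' and therefore the saturation can only enlarge the summand containing $y_3$, forcing \eqref{equation-possibilities-lP=3+III-b} --- does not hold, and this is a genuine gap rather than a fixable detail. The direct-sum decomposition of $\gr_C^2\OOO$ is not canonical, and $\varphi$ need not be diagonal with respect to the two decompositions: the component of $\BBB^{\totimes 2}$ landing in the summand $(a)$ can be (and, in case \eqref{equation-possibilities-lP=3+III-a}, must be) a nonzero constant. Indeed, the paper's proof writes out the matrix of $\varphi$ and shows that if that coupling constant ($b_5$ in the reduced matrix \eqref{equation-lP=3-7-matrix}) were zero, then $(\coker_P\varphi)^\sharp$ would fail to be cyclic, contradicting \eqref{equation-lP=3-7-Coker-P}; so the very off-diagonal entry your argument declares forbidden is in fact forced. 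Put differently: the degree and cokernel bookkeeping you propose is exactly what Lemma \ref{lemma-possibilities-lP=3+III} already carries out, and its output is that \eqref{equation-possibilities-lP=3+III-a} \emph{is} numerically consistent --- otherwise it would not appear in the case list at all. No refinement of that bookkeeping can exclude it.

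What the paper actually does in case \eqref{equation-possibilities-lP=3+III-a} is qualitatively different and much heavier: after a deformation making $\delta,\epsilon$ general, the matrix analysis yields $v_3\ni y_3$ and hence $y_1y_3\in\beta$ with a pinned-down coefficient $1/c$; this places $H=\{\alpha^\circ=\beta=0\}$ in the setting of Computation \ref{computation-lP=3+III-part2}, whose explicit resolution (Lemma \ref{lemma-computation-lP=3+III-part2}) limits $\Delta(H,C)$ to types \type{a)}, \type{b_n)}, \type{c)}; and finally Lemma \ref{slemma-lP=3+III-generalityH} shows that a surface with any of those graphs cannot be a \emph{general} member of $|\OOO_X|_C$. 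The contradiction is with the generality of $H$, not with any $\ell$-degree count. Your alternative route via $\chi(\omega/F^2\omega)=0$ and \cite[Lemma 3.7]{Mori-Prokhorov-IIA-1} is only asserted, not checked; that argument is what disposes of case \eqref{equation-possibilities-lP=3+III-c}, where the summand $(-1)$ produces the needed obstruction, but the summands $(a)\toplus(-1+P^\sharp)^{\toplus 2}$ of case \eqref{equation-possibilities-lP=3+III-a} do not produce one --- which is precisely why the authors resort to the geometric argument.
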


\begin{proof}
The deformation of the form
\begin{equation}
\label{equation-lP=3-7-deformations}
\alpha'=\alpha+\delta' y_4^{3}+\epsilon' y_1y_3y_4
\end{equation}
does not change the case division of
Lemma \xref{lemma-possibilities-lP=3+III} because $y_4^3,\, y_1y_3y_4\in I^{(3)}$.
Since it suffices to disprove a small deformation of $X$, we may assume that in \eqref{equation-alpha-lP=3-and-7}
the coefficients $\delta$ and $\epsilon$ are general and $k=1$.

Let us analyze  the map $\varphi$ (see \eqref{equation-varphi}) in our case.
Since the map $\AAA^{\totimes 2}\to (-1+P^\sharp)$ is zero (by the degree consideration),
the image of
$\AAA^{\totimes 2}=(-1+2P^\sharp) \hookrightarrow \gr_C^2\OOO$
must be contained in the first summand $(a)\subset \gr_C^2\OOO$. Since $(-1+P^\sharp)^{\toplus 2}$ has
no global sections, $\beta$ must be a global section of $(a)$.
The map $\varphi$ is given by the following matrix:
\[
\arraycolsep=1.4pt
\begin{blockarray}{cc@{\qquad}ccc}
&&\scriptstyle{(-1+2P^\sharp)}&\scriptstyle{(-1+P^\sharp)}&\scriptstyle{(-1)}
\\[7pt]
\begin{block}{rc@{\qquad}(ccc)}
\scriptstyle{(a)}&\scriptstyle{v_1}&y_1^2h(y_1^4)&\star&\star\star
\\
\scriptstyle{(-1+P^\sharp)}&\scriptstyle{v_2}&0&b_1&b_3y_1
\\
\scriptstyle{(-1+P^\sharp)}&\scriptstyle{v_3}&0&b_2&b_4y_1
\\
\end{block}
\end{blockarray}
\]
where $b_1,\dots, b_4$ are constants
and $h$ is a polynomial of degree $\le a$. Since the matrix is non-degenerate, $(b_1b_4-b_2b_3)h\neq 0$.
Applying elementary transformations  of rows
and switching the second and the third rows
(which correspond to automorphisms of 
$\gr_C^2\OOO$), one can reduce the matrix to the form 
\begin{equation}
\label{equation-lP=3-7-matrix}
\begin{pmatrix}
y_1^2h(y_1^4)&0&b_5
\\
0&1&0
\\
0&0&y_1
\end{pmatrix}
\end{equation}
where $b_5$ is a constant.
If $b_5=0$, then 
\[
(\coker_{P} \varphi)^\sharp \simeq 
\OOO_C^\sharp /(y_1)\oplus \OOO_C^\sharp /(y_1^2h).
\]
This contradicts \eqref{equation-lP=3-7-Coker-P}. Hence, we may assume that $b_5=1$.
From the matrix \eqref{equation-lP=3-7-matrix}  we see
\begin{eqnarray*}
y_2^2&=& y_1^2hv_1,
\\
y_4^2&=& v_1+y_1v_3.
\end{eqnarray*}
Eliminating $v_1$ we obtain the following relations in $\gr_C^2\OOO$:
\begin{equation}
\label{equation-lP=3-7-vv}
v_1=y_4^2-y_1v_3,\qquad 
y_1^3h v_3+y_2^2-y_1^2hy_4^2=0.
\end{equation}
The last one must a multiple of $\alpha$.
\begin{scase}
\label{scase-lP=3-7-new-treatment}
If $h$ is a unit, then  
comparing with  \eqref {equation-alpha-lP=3-and-7}  we see that $\ell(P)=3$, 
$c=h(0)\neq 0$ and $v_3\ni y_3$.
If $h$ is linear, then  $\ell(P)=7$, 
$c=h(0)= 0$ and again $v_3\ni y_3$.
Since $\beta$ is a section of $(a)\subset \gr_C^2\OOO$, it must be proportional 
to $v_1$. Therefore, $y_1y_3\in \beta$. Moreover,  \eqref{equation-lP=3-7-vv}
shows that in the case $\ell(P)=3$ the term $y_1y_3$ appears in $\beta$ with coefficient $1/c$.
Note that the coefficients of $y_1^2y_4^2\in \alpha$ and $y_4^2,\, y_1y_3\in \beta$
are preserved under deformations \eqref{equation-lP=3-7-deformations}.
So we may assume that the condition $\epsilon c\neq \delta$
of \ref{computation-lP=3+III-part2} is satisfied.
Thus in the case $\ell(P)=3$ we may apply Computation \ref{computation-lP=3+III-part2}.
In the case $\ell(P)=7$ we also  may apply \ref{computation-lP=3+III-part2}
to $\alpha^o=\beta=0$, where $\alpha^o$ is a linear combination of 
$\alpha$ and $y_1^2\beta$  (and so $y_1^2y_4^2\in \alpha^o$).
Then in both cases we obtain a contradiction by Lemma \ref{slemma-lP=3+III-generalityH} below.
\end{scase}

\begin{slemma}\label{lemma-computation-lP=3+III-part2}
Assume that $\Delta(H,C)$ at $P$ is as in
\eqref{graphs-computation-lP=3+III-part2}.
Then the contraction $f$ is birational and
$\Delta(H,C)$
has one of the following forms:
\begin{equation*}
\xy
\xymatrix@R=1pt@C=10pt{
\mathrm{a)}&&\hbox to 5pt{\hss {$\scriptstyle{C}$ $\overset{3}{\scriptstyle \odot}$}}\ar@{-}[d]&\circ\ar@{-}[d]
\\
&\underset C\bullet\ar@{-}[r] &\circ\ar@{-}[r] &\underset{3}\circ\ar@{-}[r]&\circ \ar@{-}[r]&\circ
}
\endxy
\hspace{17pt}
\xy
\xymatrix"M"@C=10pt@R=1pt{
\mathrm {b_n)}&&&\circ\ar@{-}[d]\ar@{-}[r]& \cdots\ar@{-}[r]&\circ
\\
&\underset {C}{\bullet}\ar@{-}[r] &\circ\ar@{-}[r]
&\underset{3}{\circ}\ar@{-}[r]&\circ\ar@{-}[r]&\circ
}
\POS"M1,4"."M1,6"!C*\frm{^\}},+U*++!D\txt{$\scriptstyle{n\ge 1}$}
\endxy
\end{equation*}
\begin{equation*}
\xy
\xymatrix@R1pt@C13pt{
\mathrm {c)}&\overset{4}{\diamond}\ar@{-}[d]&&\circ\ar@{-}[d]
\\
&\underset{C}\bullet\ar@{-}[r]&\circ\ar@{-}[r] &\underset{3}\circ\ar@{-}[r] &\circ\ar@{-}[r] &\circ
}
\endxy
\end{equation*}
where $\bullet$, as usual, corresponds to a component of the proper transform of $C$ 
that is a $(-1)$-curve,
$\scriptstyle \odot$ corresponds to a component that is not a $(-1)$-curve,
and $\diamond$ corresponds to an exceptional divisor over a point on $C\setminus \{P\}$.
\end{slemma}

\begin{proof}
Let $H^{\n}\to \tilde H$ be the normalization, let 
$\hat H\to H^{\n}$ be the minimal resolution,
%
and let $\hat C\subset \hat H$ be
the proper transform of $C$.
Assume that $\hat C$ has two components $\hat C_1$ and $\hat C_2$
(the case \eqref{graphs-computation-lP=3+III-part2}a)).
Then $\Delta(H,C)$ has the form
\begin{equation*}
\xymatrix@R1pt {
\ovalh{\phantom{PP}$\Gamma_2$\phantom{PP}}
&&\ar@{-}[ll]\scriptstyle{\hat C_2}\ar@{-}[d]&
\ovalh{\phantom{P}$\Gamma$\phantom{P}}
\\
\ovalh{\phantom{PP}$\Gamma_1$\phantom{PP}}\ar@{-}[r]
&\scriptstyle{\hat C_1}\ar@{-}[r]&
\circ\ar@{-}[r]&\underset3\circ\ar@{-}[r]\ar@{-}[u]&\circ\ar@{-}[r]&\circ
}
\end{equation*}
where subgraphs $\Gamma_1$ and $\Gamma_2$ correspond to singularities  
of $H^{\n}$ outside $P$ and $\Gamma$ is a Du Val
subgraph corresponding to 
$O'\in \tilde H$ (see \ref{claim-new-11-3}).
Since the whole configuration $\Delta(H,C)$ is contractible to
a Du Val point
or corresponds to a fiber of a rational curve fibration (see \ref{sde1}), it contains a $(-1)$-curve.
Thus we may assume by symmetry that $\hat C_1^2=-1$. Then contracting $\hat C_1$ we obtain
\begin{equation*}
\xymatrix@R1pt {
\ovalh{\phantom{PP}$\Gamma_2$\phantom{PP}}&&\ar@{-}[ll]\scriptstyle{\hat C_2}\ar@{-}[d]
&\ovalh{\phantom{P}$\Gamma$\phantom{P}}
\\
\ovalh{\phantom{PP}$\Gamma_1'$\phantom{PP}}\ar@{-}[rr]&&\bullet\ar@{-}[r]&
\underset3\circ\ar@{-}[r]\ar@{-}[u]&\circ\ar@{-}[r]&\circ
}
\end{equation*}
Then $\Gamma_1'$ must be empty. Contracting the black vertex we obtain
\begin{equation*}
\xymatrix@R1pt {
\ovalh{\phantom{PP}$\Gamma_2$\phantom{PP}}&&\ar@{-}[ll]\scriptstyle{\hat C_2'}\ar@{-}@/_3pt/[dr]
&\ovalh{\phantom{P}$\Gamma$\phantom{P}}
\\
&&&\circ\ar@{-}[r]\ar@{-}[u]&\circ\ar@{-}[r]&\circ
}
\end{equation*}
Recall that $\Gamma\neq \emptyset$. 
It is easy to see that configuration $\Delta(H,C)$ does not
correspond to a fiber of a rational curve fibration. Hence $f$ is birational.
Since $y_4^3\in \alpha$, the general member $D\in |-K_X|$ is of type \type{D_5}
(see \ref{ge}). Hence $f(H)$ is either of type \type{D_5} or ``better''.
This implies that 
$\Gamma_2=\emptyset$, $\hat C_2'^2=-1$ and so $\hat C_2^2=-2$. Moreover, $\Gamma$ consists of a single vertex.
Thus we obtain the case a).
The cases where $\hat C$ is irreducible is treated in a similar way.
\end{proof}

\begin{slemma}\label{slemma-lP=3+III-generalityH}
Assume that $(H,C)$ is of type
\type{a)}, \type{b_n)} or \type{c)}
of Lemma \xref{lemma-computation-lP=3+III-part2}.
Then the chosen element $H$
is not general in $|\OOO_X|_C$.
\end{slemma}
\begin{proof}
Take a divisor $\Theta$ on the minimal resolution
whose coefficients for \type{a)} and \type{b_n)} are as follows:
\begin{equation*}
\xy
\xymatrix@R=0pt@C=10pt{
\mathrm{a)}&\overset{1}{\scriptstyle \odot}\ar@{-}[d]&\overset{1}\circ\ar@{-}[d]&&\overset{1}\vartriangle\ar@{-}[d]
\\
\underset 3\bullet\ar@{-}[r] &\underset 3\circ\ar@{-}[r] &
\underset{2}\circ\ar@{-}[r]&\underset 2\circ \ar@{-}[r]&\underset 2\circ&\underset 1\vartriangle\ar@{-}[l]
}
\endxy
\hspace{25pt}
\xy
\xymatrix@C=17pt@R=1pt{
\mathrm {b_n)}&&\overset{1}\circ\ar@{-}[d]\ar@{-}[r]& \cdots\ar@{-}[r]&\overset{1}\circ&\overset{1}\vartriangle\ar@{-}[l]
\\
\underset {1}{\bullet}\ar@{-}[r] &\underset{1}\circ\ar@{-}[r]
&\underset{1}{\circ}\ar@{-}[r]&\underset{1}\circ\ar@{-}[r]&\underset{1}\circ&\underset{1}\vartriangle\ar@{-}[l]
}
\endxy
\end{equation*}
where $\vartriangle$ corresponds to
an arbitrary smooth analytic curve meeting the corresponding component
transversely.
It is easy to verify that $\Theta$ is numerically trivial,
so $\Theta$ is the pull-back of a Cartier divisor $\Theta_Z$ on $H_Z$.
Clearly, $\Theta_Z$ extends to a Cartier divisor $G_Z$ on $Z$.
Let $G:=f^*G_Z$. Then $\Theta$ is the pull-back of $G|_H$.

In the case \type{a)} the normalization of $H$ at a general point of $C$
is locally reducible: $H=H_1+H_2$.
The diagram \type{a)} shows that for $H_i\cap G$ is a reduced divisor
for some $i\in \{1,\, 2\}$.
Hence, $G\in |\OOO_X|_C$ is normal which contradicts our assumptions.
In the case \type{b_n)} and \type{c)}
the normalization of $H$ is a bijection by
Corollary \xref{scorollary-lP=3+III-sing}.
In the case \type{b_n)} 
it is easy to see that
the multiplicity of the intersection $H\cap G$ at a general point of $C$
is $\le 2$. This shows that the divisor $G\in |\OOO_X|_C$ is normal, a contradiction.

Similar arguments show that in the case \type{c)} the multiplicity of the intersection
$H\cap G$ at a general point of $C$
equals $4$.
By Corollary \ref{scorollary-lP=3+III-sing}
$H$ has a cuspidal singularity at a general point of $C$.
Let $D\subset X$ be a disk that intersects $C$ transversely at a general point.
Then the curves $H|_D$ and $G|_D$ are cuspidal.
Since $H|_D \cdot G|_D=H\cdot G\cdot D=4$, these cusps are in general position,
that is, the quadratic parts of the corresponding equations are not proportional.
But then the general member of the pencil generated by $H|_D$ and $G|_D$
has an ordinary double point at the origin.
Hence the chosen element $H\in |\OOO_X|_C$ is not general, a contradiction.
\end{proof}
%
%
%
Thus the case \eqref{equation-possibilities-lP=3+III-a} does not occur.
Lemma \ref{lemma-equation-possibilities-lP=3+IIIa} is proved.
\end{proof}

\begin{case}{\bf Case \eqref{equation-possibilities-lP=3+III-b}.}
\label{Subcase-equation-possibilities-lP=3+III-c}
We will show that Computation \ref{computation-lP=3a-III} is applicable in this case and 
the possibility \ref{main-theorem-divisorial} occurs. 
We have
\begin{equation*}
\vcenter{
\xymatrix@R=6pt@C=-3pt{
\gr_C^2\OOO= &(P^\sharp)\ar@{=}[d]&\toplus& (0)\ar@{=}[d]&\toplus& (-1+P^\sharp).\ar@{=}[d]
\\
& \DDD && \EEE && \GGG
}}
\end{equation*}

We apply the arguments similar to  that used in 
the proof of Lemma \ref{lemma-equation-possibilities-lP=3+IIIa}.
In our case the map $\varphi$ is given by the following matrix:
\[
\begin{blockarray}{cc@{\qquad}ccc}
&&\scriptstyle{(-1+2P^\sharp)}&\scriptstyle{(-1+P^\sharp)}&\scriptstyle{(-1)}
\\[7pt]
\begin{block}{rc@{\qquad}(ccc)}
\scriptstyle{(P^\sharp)}&\scriptstyle{w_1} &b_1y_1^3&h(y_1^4)&\star
\\
\scriptstyle{(0)} &\scriptstyle{w_2}    &b_2y_1^2&b_3y_1^3&\star\star
\\
\scriptstyle{(-1+P^\sharp)}& \scriptstyle{w_3}   &0&b_4&b_5y_1
\\
\end{block}
\end{blockarray}
\]
where $b_1,\dots,b_5$ are constants, $h$ is  a polynomial of degree $\le 1$, and $\star$
is divisible by $y_1$. 
Consider the map
\begin{equation}
\label{equation-lP=3-7-definition-pi}
\pi: (-1+P^\sharp)=\AAA\totimes\BBB \xrightarrow{\hspace*{10pt}}
\gr^2_C\OOO
\xrightarrow{\makebox[20pt]{ $\scriptstyle\pr$}} 
\GGG=(-1+P^\sharp),
\end{equation}
which is uniquely determined by $\AAA$ and $\BBB$.
We may regard $\pi$ as the multiplication by  $b_4$.

\begin{slemma}
$b_4\neq 0$.
\end{slemma}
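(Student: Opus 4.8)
The plan is to deduce $b_4\neq0$ from the cyclicity of the cokernel recorded in \eqref{equation-lP=3-7-Coker-P}. By definition $b_4$ is the $(\GGG,\AAA\totimes\BBB)$-entry of the matrix of $\varphi$, i.e. the scalar by which the multiplication $\pi$ of \eqref{equation-lP=3-7-definition-pi} acts on the line $(-1+P^\sharp)$; thus $b_4\neq0$ is exactly the assertion that $\varphi$ does not carry the middle summand $\AAA\totimes\BBB$ into $\DDD\toplus\EEE$. Since \eqref{equation-lP=3-7-Coker-P} exhibits $\coker_P\varphi^\sharp\cong\OOO_{C^\sharp}/(y_1^{\ell(P)})\cdot y_3$ as a \emph{cyclic} $\OOO_{C^\sharp}$-module, its first Fitting ideal is the unit ideal, so at least one $2\times2$ minor of the displayed presenting matrix of $\varphi^\sharp$ must be invertible in $\OOO_{C^\sharp}$.

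Next I would inspect those minors. In the last row only the entry $b_4$ fails to lie in $(y_1)$ (the others being $0$ and $b_5y_1$), the entry marked $\star$ lies in $(y_1)$, and the surviving entries of the first two columns lie in $(y_1^2)$. A direct check then shows that every $2\times2$ minor \emph{not} involving $b_4$ is divisible by $y_1$, hence is a non-unit, with the single exception of the minor in rows $2,3$ and columns $2,3$, namely $b_3b_5y_1^4-b_4\,\star\star$. For this to be a unit one needs both $b_4\neq0$ and $\star\star(0)\neq0$. As \eqref{equation-lP=3-7-Coker-P} guarantees a unit minor, this forces $b_4\neq0$ (and, incidentally, $\star\star(0)\neq0$).

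The step requiring real care, and the main obstacle, is to rule out that a minor built from the first two columns is itself the invertible one: the minor $h\,\star\star-\star\,b_3y_1^3$ in rows $1,2$ and columns $2,3$ is a unit as soon as $h(0)\neq0$, and then cyclicity alone would not yield $b_4\neq0$. So the crux is to show that the $\DDD$-entry $h=h(y_1^4)$ of the product $y_2y_4$ vanishes at $P$. Here the precise $\ell$-degrees intervene: the relevant morphism $\AAA\totimes\BBB=(-1+P^\sharp)\to\DDD=(P^\sharp)$ is a section of $\OOO(1)$, and I would pin down, from the shape of the relation \eqref{equation-alpha-lP=3-and-7-mod} governing $\gr_C^2\OOO$ (in particular from the fact that $y_2^2\equiv -y_1^{\ell(P)}y_3$ forces the $\DDD$-component of the degree-$2$ products to be non-invertible at $P$), that $h\in(y_1)$. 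Once $h(0)=0$ is established, the minor computation of the second paragraph is immediate and gives $b_4\neq0$.
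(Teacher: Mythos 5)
Your reduction to Fitting ideals is fine as far as it goes: cyclicity of $\coker_P\varphi^\sharp$ does force some $2\times 2$ minor to be a unit, and your minor-by-minor check correctly isolates the two candidates, so that cyclicity only yields ``$\star\star(0)\neq 0$ and ($b_4\neq 0$ or $h(0)\neq 0$)''. You have therefore correctly located the obstacle. But the step you propose to close the gap --- proving $h\in(y_1)$ --- is not just unproved, it is false precisely in the situation you need it. By Remark \ref{remarkProposition-lP=3+IIIa} the monomials $y_4^2,\ y_2y_4,\ y_3$ form an $\ell$-basis of $\gr_C^2\OOO$ at $P$; in particular $y_2y_4$ is part of a minimal set of generators of the stalk. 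Writing $y_2y_4=h\,w_1+b_3y_1^3\,w_2+b_4\,w_3$ against the $\ell$-basis $w_1,w_2,w_3$, Nakayama forces at least one of $h,\ b_3y_1^3,\ b_4$ to be a unit; so if $b_4=0$ then necessarily $h(0)\neq 0$. The relation $y_2^2\equiv -y_1^{\ell(P)}y_3$ constrains the $y_2^2$-column, not the $y_2y_4$-column, and gives no control on $h$. Consequently the configuration $b_4=0$, $h$ a unit is perfectly consistent with the cyclic cokernel \eqref{equation-lP=3-7-Coker-P}, and no purely local computation at $P$ can exclude it.

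This is why the paper's proof is of a different nature: it assumes $b_4=0$, splits into the sub-cases $b_2=0$ and $b_2\neq 0$ (where cyclicity indeed forces $h$ to be a unit and pins down $\ell(P)=7$, resp.\ $\ell(P)=3$), and in each sub-case extracts the concrete consequence that $y_1y_3$ occurs in $\beta$ with a definite coefficient. The contradiction then comes from global geometry, not local algebra: with $y_1y_3\in\beta$ one can run Computation \ref{computation-lP=3+III-part2} and Lemma \ref{lemma-computation-lP=3+III-part2} to determine the dual graph of $(H,C)$, and Lemma \ref{slemma-lP=3+III-generalityH} shows that a surface with such a graph cannot be a \emph{general} member of $|\OOO_X|_C$ (one also needs the deformation \eqref{equation-lP=3-7-deformations} to arrange the genericity hypotheses while preserving the case division and the vanishing of $b_4$). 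You would need to import this entire geometric mechanism; the Fitting-ideal argument alone cannot finish.
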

\begin{proof}
Assume that  $b_4=0$. Since the matrix is non-degenerate, $b_5\neq0$.
Applying elementary transformations  of rows, as in the proof of 
Lemma \ref{lemma-equation-possibilities-lP=3+IIIa}, one can reduce the matrix to the form 
\begin{equation}
\label{equation-lP=3-7-matrix-2}
\begin{pmatrix}
b_1y_1^3&h(y_1^4)&0
\\
b_2y_1^2&b_3y_1^3&b_6
\\
0&0&y_1
\end{pmatrix}
\end{equation}
where $b_6$ is a constant.
If $b_6=0$, then 
\[
(\coker_{P} \varphi)^\sharp \simeq 
\OOO_C^\sharp /(y_1)\oplus \text{(non-zero $\OOO_C^\sharp$-module)}.
\]
This contradicts \eqref{equation-lP=3-7-Coker-P}. Hence, we may assume that $b_6=1$.
Assume that $b_2=0$.
Applying elementary row transformations 
we can reduce \eqref{equation-lP=3-7-matrix-2} to the form
\[
 \begin{pmatrix}
y_1^3&h(y_1^4)&0
\\
0&b_3y_1^3&1
\\
0&0&y_1
\end{pmatrix}
\]
which gives us
\begin{eqnarray*}
y_2^2&=& y_1^3 w_1,
\\
y_2y_4&=& h(y_1^4) w_1+b_3y_1^3 w_2,
\\
y_4^2&=& w_2+y_1w_3.
\end{eqnarray*}
If $h(0)=0$, then  one can see that 
$(\coker_{P} \varphi)^\sharp$ cannot be a cyclic $\OOO_C^\sharp$-module.
Thus, $h$ is a unit and we can eliminate $w_1$ and $w_2$:
\begin{eqnarray*}
y_2^2&=&     \textstyle{\frac 1h y_1^3y_2y_4-\frac {b_3}h y_1^6 y_4^2+\frac {b_3}h y_1^7w_3},
\\
 w_1&=& \textstyle{\frac 1h y_2y_4-\frac {b_3}h y_1^3 y_4^2+\frac {b_3}h y_1^4w_3},
\\
 w_2&=&y_4^2-y_1w_3.
\end{eqnarray*}
Comparing the first equation with \eqref{equation-alpha-lP=3-and-7}
we see that $\ell(P)=7$ and $w_3\ni y_3$.
Then from the second one we see $w_1\not\ni y_3$. 
Clearly, $\beta$ is a linear combination of $y_1w_1$ and $w_2$
(with constant coefficients). Hence, $\beta\ni y_1y_3$.
As in the proof of Lemma \ref{lemma-equation-possibilities-lP=3+IIIa}
a deformation of the form \eqref{equation-lP=3-7-deformations} is trivial 
modulo $I^{(3)}$ and so it
preserves case division \ref{lemma-possibilities-lP=3+III}, as well as, 
the vanishing of $b_4$.
Then we can argue as in \ref{scase-lP=3-7-new-treatment} and get a contradiction.

Hence $b_2\neq 0$. Then we may assume that $b_1=0$ and  $b_2=1$. The relations in 
$(\coker_{P} \varphi)^\sharp$ are $y_1^2w_2=w_2+y_1w_3=0$, $hw_1+b_3y_1^3w_2=0$.
Eliminating $w_2$ one can see
\[
 (\coker_{P} \varphi)^\sharp\simeq 
\OOO_C^\sharp /(y_1^3)\oplus \OOO_C^\sharp /(h).
\]
By \eqref{equation-lP=3-7-Coker-P}
we have $h(0)\neq 0$ and $\ell(P)=3$.
From the matrix \eqref{equation-lP=3-7-matrix-2}  we see
\begin{eqnarray*}
y_4^2&=& w_2+y_1w_3,
\\
y_2^2&=& y_1^2w_2,
\\
y_2y_4&=& h(y_1^4)w_1+ b_3y_1w_2.
\end{eqnarray*}
Eliminating $w_2$ we obtain the following relations in $\gr_C^2\OOO$:
\begin{equation}
\label{equation-lP=3-7-vv-2}
w_2=y_4^2-y_1w_3,\qquad
y_2^2- y_1^2y_4^2+y_1^3w_3=0.
\end{equation}
The last must be congruent to $ \alpha\mod I^{(3)}$. 
Comparing with  \eqref {equation-alpha-lP=3-and-7}  we see that $w_3=\frac 1c y_3$ in $\gr_C^2\OOO$.
Since $\beta$ is a section of $(0)\subset \gr_C^2\OOO$, it must be proportional 
to $w_2$. Therefore, $y_1y_3\in \beta$. Moreover, \eqref{equation-lP=3-7-vv-2}
shows that $y_1y_3$ appears in $\beta$ with coefficient $1/c$.
Now we apply Computation \ref{computation-lP=3+III-part2},
Lemma \ref{lemma-computation-lP=3+III-part2},
and Lemma \ref{slemma-lP=3+III-generalityH}
and get a contradiction.
\end{proof}

\begin{scase}
\label{treating-6-5-5}
From now on we assume that $b_4\neq 0$.
In other words, the map $\pi$
is non-zero.
The induced map 
\[
\BBB^{\totimes 2}=(-1)\longrightarrow \GGG=(-1+P^\sharp)
\]
can be regarded as the multiplication by $sy_1$ for some $s$.
For $\mu\in \CC$, take a subsheaf $\BBB'\subset \AAA\toplus\BBB$ so that
$y_4':=y_4+\mu y_1y_2$ is an $\ell$-basis of $\BBB'$.
Clearly, $\gr^1_C\OOO =\AAA\toplus\BBB'$.
Regard $y_1$ as a map $\BBB\to \AAA$.
Then $(\mu y_1,1)(\BBB)\subset \AAA\toplus \BBB$ and
we have the following diagram
\begin{equation*}
\xymatrix@R10pt{
((\mu y_1,1)(\BBB))^{\totimes 2}\ar@{=}[d]\ar@{^{(}->}[r]& \tilde S^2\gr_C^1\OOO\ar[r]^-{\pr}&\GGG
\\
(\mu^2 y_1^2,2\mu y_1,1)(\BBB^{\totimes 2})\ar@/_15pt/[urr]_-{\cdot(2\mu y_1b_4+sy_1)}
}
\end{equation*}
Set $\mu:=-s/(2b_4)$. With this choice of $\mu$, the map
${\BBB'}^{\totimes 2}\to \GGG$ is zero.
Thus
$\AAA^{\totimes 2}\toplus\BBB'^{\totimes 2}\subset \DDD \toplus\EEE$.
Let $\KKK$ be the ideal such that $I^{(2)}\supset \KKK\supset I^{(3)}$
and $\KKK/ I^{(3)}= \DDD\toplus \EEE$.

Since $\AAA^{\totimes 2}\to \GGG$ is zero, perturbing $\BBB$
with $\mu$ has no effect on $\pi: \AAA \totimes \BBB \to \GGG$, and we use the
same notation $\pi: \AAA \totimes \BBB' \to \GGG$.
\end{scase}

\begin{slemma}\label{lemma-lP=3-7-ci}
$I\KKK=I^{(3)}$ outside $P$ and
$I^{\sharp}\KKK^{\sharp}=(I^{(3)})^{\sharp}$ at $P$.
\end{slemma}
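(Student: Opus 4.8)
The inclusions $I\KKK\subseteq I^{(3)}$ and $I^{\sharp}\KKK^{\sharp}\subseteq(I^{(3)})^{\sharp}$ hold automatically, since $\KKK\subseteq I^{(2)}$ and $I\cdot I^{(2)}\subseteq I^{(3)}$; the plan is therefore to prove the reverse inclusions, both of which are local. I would argue in the local ring at each point of $C$ and pass to the associated graded algebra $\gr_C^{\bullet}$ for the symbolic filtration $\{I^{(n)}\}$. Because this filtration is separated and, by Proposition \ref{proposition-lP=4-XC}\ref{proposition-lP=4-XC-2}, $\gr_C^{\bullet}$ is generated in degrees $\le 2$, a filtered Nakayama argument reduces the equality to a surjectivity statement on $\gr_C^{\bullet}$: that the graded ideal generated by $\gr_C^{\ge 1}$ times $\DDD\toplus\EEE$ (together with $\gr_C^{\ge 3}$) recovers every $\gr_C^n$ with $n\ge 3$.

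The decisive point is that, after replacing $y_3$ by $y_3+\lambda y_2y_4$ for a suitable constant $\lambda$ (permissible, as it preserves $\wt y_3$ and the shape of $\alpha$), one has $\bar y_3\in\DDD\toplus\EEE$, i.e. $y_3\in\KKK^{\sharp}$ modulo $I^{(3)}$. Indeed, the matrix of $\varphi$ in \ref{Subcase-equation-possibilities-lP=3+III-c} records $\pr_{\GGG}(\overline{y_2^2})=0$, while Proposition \ref{proposition-lP=4-XC}\ref{proposition-lP=4-XC-2} applied to $\alpha$ (with $y_1,y_3$ in the roles of $x_1,x_4$ and $m=\ell(P)$) gives $\overline{y_2^2}\equiv-y_1^{\ell(P)}\bar y_3$ in $\gr_C^2\OOO$ modulo the span of $\overline{y_2y_4},\overline{y_4^2}$. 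Since $\GGG\simeq(-1+P^{\sharp})$ is torsion-free and $\pi=\pr_{\GGG}|_{\AAA\totimes\BBB}$ of \eqref{equation-lP=3-7-definition-pi} is multiplication by $b_4\neq 0$, the residual $\GGG$-contribution can be absorbed by the displayed substitution, forcing $\pr_{\GGG}(\bar y_3)=0$. Thus near $P$ one has $\KKK^{\sharp}=(W,\,y_3)+(I^{(3)})^{\sharp}$, where $W\equiv y_4^2$ modulo $y_1y_2y_4$ generates the remaining summand.

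With $y_3\in\KKK^{\sharp}$ in hand, the graded surjectivity is a direct check in the monomial basis of Proposition \ref{proposition-lP=4-XC}\ref{proposition-lP=4-XC-2}. In degree $3$ the products $y_2y_3$ and $y_4y_3$ lie in $I^{\sharp}\KKK^{\sharp}$ outright, while $y_2W$ and $y_4W$ supply $y_2y_4^2$ and $y_4^3$ after using $\overline{y_2^2}=-y_1^{\ell(P)}\bar y_3$; the resulting transition matrix into the basis $\{y_4^3,\,y_2y_4^2,\,y_2y_3,\,y_4y_3\}$ has unit determinant, so $I^{\sharp}\KKK^{\sharp}=\gr_C^3$ at $P$. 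For $n\ge 4$ the only monomials not visibly produced by $\gr_C^1\cdot\gr_C^{n-1}$ are the pure powers $y_3^{b}$ with $2b=n$; but each equals $y_3\cdot y_3^{b-1}\in(\DDD\toplus\EEE)\cdot\gr_C^{n-2}$, and these too lie in the product once $y_3\in\KKK^{\sharp}$. This closes the induction and yields $I^{\sharp}\KKK^{\sharp}=(I^{(3)})^{\sharp}$ at $P$.

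Away from $P$ the fractional twist disappears and the same computation (now with $y_3$, $W$ honest local functions) gives $I\KKK=I^{(3)}$ at every smooth point of $C$; equivalently, the cokernel of $\gr_C^1\otimes(\DDD\toplus\EEE)\to\gr_C^3$ is supported only where $y_1$ vanishes, namely at $P$. The one remaining case is the type \type{(III)} point $R$, present only when $\ell(P)=3$. There the equation \eqref{equation-beta-lP=3+III-1} has exactly the form of Proposition \ref{proposition-lP=4-XC} with $m=1$ and $z_1,z_3$ in the roles of $x_1,x_4$, so the analysis repeats verbatim: one verifies $\bar z_3\in\DDD\toplus\EEE$ near $R$ and runs the same degree count. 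I expect $R$ to be the only genuine bookkeeping obstacle, since one must confirm that the global summand $\DDD\toplus\EEE$ restricts near $R$ to a span containing $z_3$; the bound $\len_R\coker_R\varphi\le 1$ from the proof of Lemma \ref{lemma-possibilities-lP=3+III} makes this a short verification.
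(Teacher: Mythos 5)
Your argument is correct and is essentially the paper's: both proofs rest on the identification $\coker\upsilon\simeq\coker\varphi$ coming from the diagram \eqref{big-diagram}, which shows that at $P^\sharp$ (resp.\ at $R$) the module $\DDD\toplus\EEE$ is generated by $y_3$ (resp.\ $z_3$) together with one of the visible quadratic generators, followed by a local generation check; your normalization $y_3\mapsto y_3+\lambda y_2y_4$ and the degree-by-degree verification on $\gr_C^{\ge 3}$ plus passage to the completion merely replace the paper's shorter finish, namely that the composite of the surjections $\OOO^{\oplus 2}\twoheadrightarrow\KKK/\KKK I\twoheadrightarrow\KKK/I^{(3)}\simeq\OOO^{\oplus 2}$ forces both maps to be isomorphisms. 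The one step you defer --- that $\bar z_3$ really lies in $\DDD\toplus\EEE$ at $R$, equivalently that $\coker_R\varphi=\CC_R$ rather than $0$ --- is precisely the point the paper also just asserts; it is forced because in case \eqref{equation-possibilities-lP=3+III-b} the equality $\deg\gr_C^2\OOO-\deg\tilde S^2\gr_C^1\OOO=\len\coker\varphi$ leaves exactly length one at $R$ after the contribution \eqref{equation-lP=3-7-Coker-P} at $P$ is accounted for.
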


\begin{proof}
Consider the following digram with $\ell$-exact rows and injective vertical arrows:
\begin{equation}\label{big-diagram}
\vcenter{
\xy
\xymatrix@R=14pt@C=20pt{
0\ar[r]
&\AAA^{\totimes 2}\toplus \BBB'^{\totimes 2}\ar[r]\ar@{^{(}->}[d]^{\upsilon}
&\tilde S^2\gr^1_C\OOO\ar[r]\ar@{^{(}->}[d]^{\varphi}&
\AAA\totimes \BBB'\ar[r]\ar[d]_{{\simeq}}^{b_4}& 0
\\
0\ar[r]&
\DDD\toplus \EEE\ar[r]
&\gr_C^2\OOO\ar[r]&
\GGG\ar[r] &0
}
\endxy
}
\end{equation}
At a point $Q\in C$ which is a smooth point of $X$,
we can choose coordinates $u_1,u_2,u_3$
for $(X,Q)$
so that $Q$ is the origin, $C$ is the $u_1$-axis,
and $u_2$ (resp. $u_3$) generates $\AAA$ (resp. $\BBB'$)
at $Q$. Then from \eqref{big-diagram} we see
\begin{equation*}
I^{(3)}=I^3=(u_2,u_3)^3, \qquad
\KKK=(u_2^2,u_3^2)+(u_2,u_3)^3,
\end{equation*}
from which follows $I^{(3)}=\KKK I$.
At $P$, again from \eqref{big-diagram} we have
\begin{equation*}
\coker_{P^\sharp}\upsilon^\sharp \simeq \coker_{P^\sharp} \varphi^\sharp
\simeq \left(\OOO_{C^\sharp}/ (y_1^3)\right) y_3.
\end{equation*}
Thus, $(\DDD\toplus \EEE)^\sharp$ is generated by $y_3$ and $\varrho$, where
$\varrho:=y_2^2$ or $y_4'^2$. Therefore,
\begin{eqnarray*}
y_2^2,\ y_4'^2\in \KKK^\sharp&=&(y_3,\varrho)+(y_2, y_4)^3,
\\
\KKK^\sharp I^\sharp &=& y_3 I^\sharp+(y_2, y_4)^3.
\end{eqnarray*}
Whence,
\begin{equation*}
\OOO_{C^\sharp}\cdot y_3 \oplus \OOO_{C^\sharp}\cdot \varrho
\twoheadrightarrow \KKK^\sharp /\KKK^\sharp I^\sharp.
\end{equation*}
Since
\begin{equation*}
\KKK^\sharp /\KKK^\sharp I^\sharp \twoheadrightarrow
\KKK^\sharp/ {I^{(3)}}^\sharp
\simeq \OOO_{C^\sharp}\oplus \OOO_{C^\sharp},
\end{equation*}
the arrow above is an isomorphism and
$I^{\sharp}\KKK^{\sharp}=(I^{(3)})^{\sharp}$ at $P^{\sharp}$.

If $\ell(P)=3$, then at $R$, changing coordinates $z_1,\dots, z_4$ keeping $z_1$ and $z_3$
the same, we may assume that $z_2$ and $z_4$
are bases at $R$ of $\AAA$ and $\BBB'$, respectively.
Then in view of \eqref{big-diagram} and $\coker_R \varphi =\CC_R$, we see that
$\DDD\toplus \EEE$ is generated by $z_3$ and $z_i^2$ for some $i=2,\, 4$.
Therefore,
\begin{eqnarray*}
z_2^2,\, z_4^2\in \KKK &=&(z_3,\, z_i^2)= (z_2,\, z_4)^3,
\\
\KKK I &=& z_3 I+(z_2, y_4)^3.
\end{eqnarray*}
Whence,
\begin{equation*}
\OOO_{C} \cdot z_3 \oplus \OOO_{C}\cdot z_i^2
\twoheadrightarrow \KKK /\KKK I.
\end{equation*}
Since
\begin{equation*}
\KKK /\KKK I \twoheadrightarrow \KKK/ I^{(3)}
\simeq \OOO_C\oplus \OOO_C,
\end{equation*}
we have $I\KKK=I^{(3)}$ at $R$.
This proves Lemma \ref{lemma-lP=3-7-ci}.
\end{proof}

\begin{scorollary}\label{corollary-lP=3-7-ci}.
$\KKK\totimes \OOO_C\simeq (P^\sharp)\toplus (0)$ and so $\KKK$ is an l.c.i.
ideal of codimension $2$ outside $P$ and $\KKK^\sharp$ is l.c.i. at $P^\sharp$.
\end{scorollary}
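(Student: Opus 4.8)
The plan is to read off both assertions almost directly from Lemma \ref{lemma-lP=3-7-ci}. First I would compute $\KKK\totimes\OOO_C$. Since $\OOO_C=\OOO_X/I$, tensoring the ideal sheaf gives $\KKK\totimes\OOO_C=\KKK/I\KKK$ as an $\ell$-sheaf. By the very definition of $\KKK$ in \ref{treating-6-5-5} we have $\KKK/I^{(3)}=\DDD\toplus\EEE=(P^\sharp)\toplus(0)$, and Lemma \ref{lemma-lP=3-7-ci} asserts precisely that $I\KKK=I^{(3)}$ outside $P$ and $I^\sharp\KKK^\sharp=(I^{(3)})^\sharp$ at $P^\sharp$. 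Combining these, $\KKK/I\KKK=\KKK/I^{(3)}=(P^\sharp)\toplus(0)$, which is the first claim.

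Next I would deduce the l.c.i.\ statement from this computation. The $\ell$-sheaf $(P^\sharp)\toplus(0)$ is locally free of rank $2$ over $\OOO_C$ (and $\KKK^\sharp\totimes\OOO_{C^\sharp}$ over $\OOO_{C^\sharp}$ at $P^\sharp$), so by Nakayama's lemma $\KKK$ is generated by exactly two elements at every point of $C\setminus\{P\}$, and $\KKK^\sharp$ is generated by two elements at $P^\sharp$. Away from $C$ one has $\KKK=\OOO_X$, since $\KKK\supset I^{(3)}$ and $I^{(3)}$ is the unit ideal off the support of $C$; hence only the points of $C$ require attention.

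To finish I would check that these two generators form a regular sequence. Set-theoretically $V(\KKK)=C$, because $I^{(3)}\subset\KKK\subset I$, so $\KKK$ has height $2$. The ambient ring is Cohen--Macaulay: $X$ has terminal, hence rational, hence Cohen--Macaulay singularities, and the index-one cover $X^\sharp$ is again terminal and Cohen--Macaulay at $P^\sharp$. An ideal whose height equals its number of generators in a Cohen--Macaulay local ring is generated by a regular sequence; applying this to the two local generators found above shows that $\KKK$ is an l.c.i.\ ideal of codimension $2$ outside $P$ and that $\KKK^\sharp$ is l.c.i.\ at $P^\sharp$.

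The only point that needs care is the bookkeeping at the non-Gorenstein point $P$: one must work throughout on the index-one cover, using that the identity $I^\sharp\KKK^\sharp=(I^{(3)})^\sharp$ and the rank computation are stated on $C^\sharp$, and that $X^\sharp$ remains Cohen--Macaulay so that the ``height equals number of generators'' criterion still produces a regular sequence. Away from $P$ the argument is the same, invoking the smoothness of $X$ at general points of $C$ and Cohen--Macaulayness at the type \type{(III)} point $R$ when $\ell(P)=3$. Everything else is formal.
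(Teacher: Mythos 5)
Your proposal is correct and is essentially the argument the paper intends: the corollary is stated as an immediate consequence of Lemma \ref{lemma-lP=3-7-ci}, with $\KKK\totimes\OOO_C=\KKK/I\KKK=\KKK/I^{(3)}=\DDD\toplus\EEE$ giving local freeness of rank $2$, and the l.c.i.\ claim following from Nakayama plus the standard fact that a height-$2$ ideal with two generators in a Cohen--Macaulay local ring is a complete intersection (all local rings involved being hypersurfaces or smooth, hence Cohen--Macaulay). Your bookkeeping at $P^\sharp$, at $R$, and off $C$ matches what the paper's lemma already supplies, so nothing is missing.
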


\begin{scase}
Thus,
\begin{eqnarray*}
\KKK/ (\KKK\totimes I)&=& (P^\sharp)\toplus (0),
\\
(\omega_X\totimes \KKK)/ (\omega_X\totimes\KKK\totimes I)&=& (0)\toplus (-P^\sharp).
\end{eqnarray*}
Our goal is to extend a non-zero section $\bar \xi $ of
$(0)\subset \omega_X\totimes \KKK/ \omega_X\totimes\KKK\totimes I$
to a section $\xi \in \Ho^0(\omega_X\totimes \KKK)$.
By the Formal Function Theorem
\begin{equation*}
\lim_{\longleftarrow}
\Ho^0\left(\frac{\omega_X\totimes \KKK}{\omega_X\totimes \KKK^{(n)}} \right)
\simeq
\lim_{\longleftarrow}\ \frac{
f_*(\omega_X\totimes \KKK)}{ \mm^n_{o,Z}f_*(\omega_X\totimes \KKK)}.
\end{equation*}
Thus, for lifting $\bar \xi$, it is sufficient
to show that the map
\begin{equation*}
\Phi_n: \Ho^0(\omega_X\totimes \KKK/ \omega_X\totimes\KKK^{(n)})
\xrightarrow{\hspace*{20pt}}
\Ho^0(\omega_X\totimes \KKK/ \omega_X\totimes\KKK\totimes I)
\end{equation*}
is surjective for all $n>0$, or equivalently $\Phi_2$ and
\begin{equation*}
\Psi_n:
\Ho^0(\omega_X\totimes \KKK/ \omega_X\totimes\KKK^{(n)})
\xrightarrow{\hspace*{20pt}}
\Ho^0(\omega_X\totimes \KKK/ \omega_X\totimes\KKK^{(n-1)})
\end{equation*}
are surjective for all $n>0$.
We have
\begin{equation*}
0 \to
\omega_X\totimes \left(
\frac{\KKK^{(n-1)}}{\KKK^{(n)}}\right)
\xrightarrow{\hspace*{20pt}}
\frac{\omega_X\totimes \KKK}{ \omega_X\totimes\KKK^{(n)}}
\xrightarrow{\makebox[35pt]{ $\scriptstyle\psi_n$}}
\frac{\omega_X\totimes \KKK}{ \omega_X\totimes\KKK^{(n-1)}}
\to 0.
\end{equation*}
Note that the sheaves
$\omega_X\totimes(\operatorname{im}(\KKK\totimes I \to \KKK))/\KKK^{(2)})$ and
\begin{equation*}
\omega_X\totimes \KKK^{(n-1)} /\omega_X\totimes\KKK^{(n)}
\simeq \tilde S^{n-1}\left(\omega_X\totimes\KKK/\omega_X\totimes\KKK^{(2)}\right)
\end{equation*}
have filtrations with successive subquotients
\begin{equation*}
(-P^\sharp)\totimes \tilde S^{n-1}\left((-P^\sharp)\toplus (0)\right)
\totimes
\begin{cases}
(0)
\\
(-1+2P^\sharp)
\\
(-1+3P^\sharp)
\\
(-1+P^\sharp)
\end{cases}
\end{equation*}
which are all $\ge (-1)$ and hence have vanishing $\Ho^1$.
Thus $\Psi_n=\Ho^0(\psi_n)$ and $\Phi_2$ are onto and so is
$\Phi_n=\Phi_2\circ \Psi_3\circ\cdots \circ \Psi_n$.
\end{scase}

\begin{scase}\label{sde-iP=3+III-sections}
Thus a non-zero section $\bar \xi $ of
$(0)\subset \omega_X\totimes \KKK/ \omega_X\totimes\KKK\totimes I$
induces
a section $\xi \in \Ho^0(\omega_X\totimes \KKK)$ which in turns
induces a generator of $(P^\sharp)$. Let $G:=\{\xi =0\}$.
Then $G\supset 4C$ and $\OOO_H\KKK=\OOO_H(-G)$.
Hence, $\KKK$ is generated by $\xi$ and $\beta$:

\begin{scorollary}\label{scorollary-9-6-8}
The ideal $\KKK$ is a global complete intersection. 
More precisely, $\KKK=(\beta,\xi)$.
\end{scorollary}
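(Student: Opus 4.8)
The plan is to reduce the global generation of $\KKK$ to a one-dimensional statement on the non-normal surface $H=\{\beta=0\}$, using the divisor $G$ and the section $\xi$ already produced in \ref{sde-iP=3+III-sections}. First I would verify that $\beta$ really is a section of $\KKK$. Since $\Ho^0(\gr_C^1\OOO)=0$, the general member $H$ is singular along $C$, so $\beta\in\Ho^0(I^{(2)})$ and its image $\bar\beta\in\Ho^0(\gr_C^2\OOO)$ is well defined. In case \eqref{equation-possibilities-lP=3+III-b} one has $\gr_C^2\OOO=\DDD\toplus\EEE\toplus\GGG$ with $\GGG=(-1+P^\sharp)$, and $\Ho^0(\GGG)=0$; hence the $\GGG$-component of $\bar\beta$ vanishes and $\bar\beta\in\Ho^0(\KKK/I^{(3)})$. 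As $I^{(3)}\subset\KKK$, this gives $\beta\in\Ho^0(\KKK)$, so $(\beta)\subset\KKK$.

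Next I would restrict to $\OOO_H=\OOO_X/(\beta)$. Because $\beta\in\KKK$, the image of $\KKK$ in $\OOO_H$ is $\OOO_H\KKK=\KKK/(\beta)$ as an $\OOO_H$-module. By \ref{sde-iP=3+III-sections} this equals $\OOO_H(-G)$, the ideal of the Cartier divisor $G|_H$; since $H$ is a surface germ, $\OOO_H(-G)$ is an invertible $\OOO_H$-ideal and therefore principal, generated by the image $\xi|_H$ of $\xi$ (recall $G=\{\xi=0\}$ and $G\supset 4C$). Thus $\KKK/(\beta)$ is generated by the single element $\xi|_H$, and lifting along the surjection $\KKK\twoheadrightarrow\KKK/(\beta)$ yields $\KKK=(\beta)+\OOO_X\cdot\xi=(\beta,\xi)$. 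Finally, by Corollary \ref{corollary-lP=3-7-ci} the ideal $\KKK$ is l.c.i.\ of codimension $2$; being generated by the two global sections $\beta$ and $\xi$, it is a global complete intersection, as claimed.

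The point requiring care is the twist by $\omega_X$: the section $\xi$ was constructed in $\Ho^0(\omega_X\totimes\KKK)$, not in $\Ho^0(\KKK)$, so I must check that it descends to an honest generator of the principal ideal $\KKK/(\beta)=\OOO_H(-G)$. This is exactly the content of the phrase ``$\xi$ induces a generator of $(P^\sharp)$'' in \ref{sde-iP=3+III-sections}: the restriction $\bar\xi$ is a nowhere-vanishing section of the summand $(0)=\omega_X|_C\totimes\DDD$ of $\omega_X\totimes\KKK\totimes\OOO_C$, and untwisting by $\omega_X$ turns it into a generator of $\DDD=(P^\sharp)$, matching $\OOO_H\KKK=\OOO_H(-G)$. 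Once this identification and the surjectivity of the lifting maps $\Phi_n$ established in \ref{sde-iP=3+III-sections} are in place, the generation of $\KKK/(\beta)$ by the single image of $\xi$ is immediate, and no further Nakayama argument along the germ is needed.
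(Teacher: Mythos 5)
Your argument is correct and is essentially the paper's own: the paper likewise deduces $\KKK=(\beta,\xi)$ from the identity $\OOO_H\KKK=\OOO_H(-G)$ established in \ref{sde-iP=3+III-sections}, with $\beta\in\Ho^0(\KKK)$ implicit from $\Ho^0(\GGG)=\Ho^0((-1+P^\sharp))=0$, exactly as you spell out. Your explicit handling of the $\omega_X$-twist (reading ``generated by $\xi$'' as $\KKK=(\beta)+\OOO_X(-G)$) matches the paper's intended, if unstated, convention.
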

Moreover, $\xi$
can be locally written as
$\xi=y_3+(\text{higher degree terms})$. Thus we may assume that there exists
a global section of $\OOO_X$ which is locally written as
$y_1y_3$, i.e. $y_1y_3\in \beta$.
On the other hand by \ref{ge} the general member $D\in |-K_X|$
is given by $y_1+\xi'=0$ for some $\xi'\in (y_2,y_3,y_4)$.
Then replacing $\beta$ with a linear combination of $\beta$ and $(y_1+\xi')\xi$ we may assume that
$y_1y_3$ appears in $\beta$ with arbitrary
coefficient $\lambda$ and $y_4^2$ appears in $\beta$ with coefficient $1$.
In particular, there is a specific section $\beta^\circ$
which does not contain $y_1y_3$ (and contains $y_4^2$).
Then $H$ can be given by the equations $\alpha^\circ=\beta=0$,
where $\alpha^\circ:= \alpha+ y_1^2\beta^\circ$ contains $y_1^2y_4^2$.
\end{scase}

Now applying Computation \ref{computation-lP=3a-III} with $l=3$ or $7$,
we obtain the diagram \ref{main-theorem-divisorial}. The following examples show that
this case does occur.
\end{case}

\begin{example}\label{example-divisorial-lP=3}
Let $Z \subset {\CC}^5_{z_1,\ldots,z_5}$ be
defined by
\begin{eqnarray*}
0&=& z_2^2+z_3+z_4z_5^k-z_1^3,\qquad k\ge 1,\\
0&=& z_1^2z_2^2+z_4^2-z_3z_5.
\end{eqnarray*}
Then $(Z,0)$ is a
threefold singularity of type \type{cD_{5}}.
Let $B \subset Z$ be the $z_5$-axis and
let $f : X \to Z$ be the weighted $(1,1,4,2,0)$-blowup.
The origin
of the $z_3$-chart
is a type \type{(IIA)} point $P$ with $\ell(P)=3$:
\begin{equation*}
\{-y_1^3y_3+y_2^2+y_3^2+y_4(y_1^2y_2^2+y_4^2)^k=0\}/\muu_{4}(1,1,3,2),
\end{equation*}
where $(C,P)$ is the $y_1$-axis.
In the $z_1$-chart we have type \type{(III)} a point.
\end{example}

\begin{example}
\label{example-divisorial-lP=7}
As in \ref{example-divisorial-lP=3}, let $Z \subset \CC^5_{z_1,\ldots,z_5}$ be
defined by
\begin{eqnarray*}
0 &=& z_2^2+z_1^2z_5+ z_3+z_4z_5^k, \qquad k\ge 1,
\\
0 &=& z_3z_5+z_1^5+z_4^2.
\end{eqnarray*}
Then the point $(Z,0)$ is of type \type{cD_{5}}.
Let $B \subset Z$ be the $z_5$-axis and
let $f : X \to Z$ be the weighted $(1,1,4,2,0)$-blowup. 
In the $z_1$-chart $X$ is smooth and
the origin of the $z_3$-chart
is a \type{(IIA)} point $P$ with $\ell(P)=7$:
\begin{equation*}
\{-y_1^7y_3+y_2^2+y_3^2-y_1^2y_4^2+y_4(y_1^5y_3+y_4^2)^k=0\}/\muu_{4}(1,1,3,2),
\end{equation*}
where $(C,P)$ is the $y_1$-axis.
\end{example}

\section{Cases $\ell(P)=4$ and $8$}\label{section-lP=4}
In this section we assume that $\ell(P)\in \{4,\, 8\}$.
We will show that Computation \ref{computation-lP=4+III} is applicable here
and the possibility \ref{main-theorem-conic-bundle} occurs.
\begin{case}
According to \ref{equation-IIA-point} we may write
\begin{equation}\label{equation-lP=4-alpha}
\alpha=y_1^{\ell(P)}y_4+y_2^2+y_3^2+\delta y_4^3+c y_1^2y_4^2+\epsilon y_1y_3y_4+
\zeta y_1^2y_2y_3+\cdots,
\end{equation}
with $\delta,\, c,\, \epsilon,\, \zeta\in \CC\{y_1^4\}$.
It is easy to see that $y_4\in I^{\sharp (2)}$. Hence,
\begin{equation}\label{equation-y14y4}
-y_1^{\ell(P)}y_4\equiv y_2^2+y_3^2+
\zeta y_1^2y_2y_3 \mod I^{\sharp (3)}.
\end{equation}
By Proposition \ref{proposition-cases-lP} in the case $\ell(P)=4$ the variety
$X$ has a type \type{(III)} point $R$ with $i_R(1)=1$ and 
$X$ is smooth outside $P$ in the case $\ell(P)=8$.
\end{case}

\begin{case}
Taking Proposition 
\ref{proposition-lP=4-XC} into account 
for any $n\ge 1$ we can write
\begin{equation*}
(\gr_C^n \OOO)^\sharp =\bigoplus_{\substack{a+b+2c= n\\ b=0,\ 1}} \OOO_{C^\sharp}\cdot y_2^ay_3^by_4^c,
\end{equation*}
where $a,\, b,\, c\ge 0$, and
\begin{equation}
\label{equation-lP=4-gr1O}
\vcenter{
\xymatrix@R=6pt@C=-3pt{
\gr_C^1\OOO= &(-1+3P^\sharp)\ar@{=}[d]&\toplus& (-1+P^\sharp),\ar@{=}[d]
\\
& \AAA && \BBB
}}
\end{equation}
where $y_2$ (resp. $y_3$) is an $\ell$-basis of
$\AAA$ (resp. $\BBB$) at $P$.
\end{case}
\begin{case}
In the case $\ell(P)=4$
by \cite[Lemma 2.16]{Mori-1988}, since $i_R(1)=1$,
the equation of $X$ at $R$ can be written as follows
\begin{equation}\label{equation-lP=4-gamma}
\gamma(z)=z_1z_4+q_2(z_2, z_3)+q_3(z_1, \dots,z_4),\quad q_3\in (z_2,z_3,z_4)^3,
\end{equation}
where $C$ is the $z_1$-axis and 
$q_2\in \CC\cdot z_2^2+\CC\cdot z_2z_3+\CC\cdot z_3^2$. Hence, $z_4\in I^{(2)}$. 
\end{case}

\begin{case}
Consider the map $\varphi: \tilde S^2\gr_C^1\OOO \hookrightarrow \gr_C^2\OOO$.
Clearly, it is an isomorphism outside $\{P,\, R\}$ (resp. $\{P\}$)
in the case $\ell(P)=4$ (resp. $\ell(P)=8$). 
The equality \eqref{equation-lP=4-gr1O} implies
\begin{eqnarray*}
\tilde S^2\gr_C^1\OOO&=&
(-1+2P^\sharp)\toplus (-1)\toplus (-2+2P^\sharp),
\\
\deg \gr_C^2\OOO &=& -4+\len \coker \varphi\ge -2.
\end{eqnarray*}
Furthermore,
\begin{equation}
\label{equation-lP=4-cokerP-cokerP}
\coker_P \varphi =\CC_{(\ell(P)/4)P}\cdot \overline{(y_1^2y_4)}. 
\end{equation}
Hence, in the case $\ell(P)=4$,
$\coker_R \varphi\neq 0$. Taking Proposition \ref{proposition-lP=4-XC}\ref{proposition-lP=4-XC-1}
into account in this case we obtain
$q_2\neq 0$ (see \eqref{equation-lP=4-gamma}) and
\begin{equation}
\label{equation-lP=4-cokerR-cokerR}
\coker_R \varphi =\CC_R\cdot \bar z_4\simeq \CC.
\end{equation}
Thus in both cases $\ell(P)=4$ and $\ell(P)=8$ we have 
$\deg \gr_C^2\OOO=-2$. 
By Lemma \ref{lemma-lP=3+IIIa}
\begin{equation}
\label{equation-lP=4-gr-2-C-O-1}
\gr_C^2\OOO \simeq \OOO\oplus \OOO(-1)^{\oplus 2}.
\end{equation}
Furthermore, $\gr_C^2\OOO$ has an $\ell$-basis $y_2y_3$, $y_2^2$, $y_4$ at $P^\sharp$.
Thus, 
\begin{equation}
\label{equation-lP=4-gr-2-C-O}
\gr_C^2\OOO=
(0)\toplus(-1+2P^\sharp)\toplus (-1+2P^\sharp),
\end{equation} 
since $\Ho^1 (\gr_C^2\omega)=0$ (cf. Lemma \ref{treating-equation-possibilities-lP=3+III-c}).
\end{case}

\begin{case}
According to \eqref {equation-lP=4-cokerP-cokerP} and \eqref {equation-lP=4-cokerR-cokerR}
\begin{equation}
\label{equation-lP=4-quotient}
\gr_C^2\OOO/\tilde S^2\gr_C^1\OOO\simeq 
\begin{cases}
\CC_P\oplus\CC_R& \text{in the case $\ell(P)=4$,}\\ 
\CC_{2P}& \text{in the case $\ell(P)=8$.} 
\end{cases}
\end{equation}
Let $\FFF$ be the sheaf 
with an $\ell$-structure defined by the  conditions:
\begin{eqnarray*}
&&\tilde S^2\gr_C^1\OOO \subset \FFF\subset \gr_C^2\OOO,
\\
&&\gr_C^2\OOO/\FFF=\CC_P,
\\
&&\gr_C^2\OOO^\sharp/\FFF^\sharp=\OOO^\sharp/(y_1^4)\cdot y_4^2.
\end{eqnarray*}
{}From \eqref{equation-lP=4-gr-2-C-O-1} one can see that 
there are two possibilities: 
\begin{numcases}{\FFF\simeq}
\OOO(-1)^{\oplus 3},\label{lP=4-F-case-1}
\\
\OOO\oplus\OOO(-1)\oplus \OOO(-2).\label{lP=4-F-case-2}
\end{numcases}
\end{case}

\begin{case}{\bf Case \eqref{lP=4-F-case-2}.}
Since $\FFF\subset \gr_C^2\OOO$, by \eqref {equation-lP=4-gr-2-C-O} 
\begin{equation*}
\FFF=(0)\toplus (-1+2P^\sharp) \toplus (-2+2P^\sharp). 
\end{equation*}
Now we treat the cases $\ell(P)=4$ and $\ell(P)=8$ separately.
\end{case}

\begin{slemma}\label{lemma-lP=4-case-does-not-occur}
The case \eqref{lP=4-F-case-2} with $\ell(P)=4$ does not occur.
\end{slemma}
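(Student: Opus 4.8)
The plan is to assume that we are in case \eqref{lP=4-F-case-2} with $\ell(P)=4$ and to derive a contradiction by the same three–step scheme that eliminated \eqref{equation-possibilities-lP=3+III-a}: first pin down the multiplication map $\varphi$ by a matrix in the $\ell$-structures, then read off the resulting local form of $H$, and finally show that $H$ cannot be general. Concretely, I would write $\varphi\colon \tilde S^2\gr_C^1\OOO\to \gr_C^2\OOO$ with respect to the $\ell$-basis $y_2^2,\,y_2y_3,\,y_3^2$ of the source (so that $\tilde S^2\gr_C^1\OOO=(-1+2P^\sharp)\toplus(-1)\toplus(-2+2P^\sharp)$) and the $\ell$-basis $y_2y_3,\,y_2^2,\,y_4$ of the target, using that in case \eqref{lP=4-F-case-2} the image already lands in $\FFF=(0)\toplus(-1+2P^\sharp)\toplus(-2+2P^\sharp)$. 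Degree bookkeeping pins down which entries can be nonzero, and elementary row and column operations—which correspond to automorphisms of the $\ell$-sheaves, exactly as in the proof of Lemma \ref{lemma-equation-possibilities-lP=3+IIIa} and its sublemma—reduce the matrix to a normal form.

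The two cokernel computations are what drive the argument, and this is where the hypothesis $\ell(P)=4$ enters. At $P$ the relation coming from \eqref{equation-lP=4-alpha} gives the cokernel \eqref{equation-lP=4-cokerP-cokerP}; at $R$, since $\ell(P)=4$ forces an extra \type{(III)} point with $q_2\neq0$ (see \eqref{equation-lP=4-gamma}), one has the simple cokernel \eqref{equation-lP=4-cokerR-cokerR}. In case \eqref{lP=4-F-case-2} we have $\FFF\simeq\OOO\oplus\OOO(-1)\oplus\OOO(-2)$, so $\Ho^0(\FFF)\neq0$ and $\Ho^1(\FFF)\neq0$, in contrast to case \eqref{lP=4-F-case-1} where $\FFF\simeq\OOO(-1)^{\oplus3}$ has both vanishing. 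This difference constrains how the simple cokernel $\CC_R$ must sit relative to the global $\ell$-structure; matching that constraint against the local description of $\gr_C^\bullet\OOO$ at $R$ furnished by Proposition \ref{proposition-lP=4-XC}\ref{proposition-lP=4-XC-4}, \ref{proposition-lP=4-XC-5} fixes the class of $q_2$ and, together with the normal form at $P$, determines the leading terms of $\alpha$ and of the section $\beta$ defining $H$. From these I would read off the local type of $H$ along $C$—in particular at a general point of $C$ and at $P$, $R$—and feed it into the appropriate \emph{Computation} (the analogue for the present matrix of Computation \ref{computation-lP=4+III}) to obtain the dual graph $\Delta(H,C)$.

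The contradiction is then produced exactly as in Lemma \ref{slemma-lP=3+III-generalityH}: on the minimal resolution of $H^{\n}$ I would exhibit a numerically trivial divisor $\Theta$ supported on the exceptional configuration together with a transversal, verify that it is the pull-back of a Cartier divisor $\Theta_Z$ on $H_Z$, extend $\Theta_Z$ to $G_Z$ on $Z$ and set $G:=f^*G_Z$; computing the intersection multiplicity of $H$ and $G$ at a general point of $C$ then shows that some member of $|\OOO_X|_C$ is normal, contradicting the standing hypothesis that the general such member is non-normal. I expect the main obstacle to be precisely the step that is invisible when $\ell(P)=8$: controlling how the simple cokernel at the \type{(III)} point $R$ interacts with the $P$-local normal form. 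The summand $(-2+2P^\sharp)$ of $\FFF$ (equivalently $\Ho^1(\FFF)\neq0$) is the structural signal that case \eqref{lP=4-F-case-2} is too degenerate, but converting this signal into an explicit degeneracy of $H$ along $C$ requires the simultaneous bookkeeping at $P$ and $R$, and it is the $R$-contribution—forced by $q_2\neq0$—that ultimately makes the configuration incompatible with $H$ being general.
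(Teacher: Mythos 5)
There is a genuine gap: your plan never reaches the fact on which the elimination of case \eqref{lP=4-F-case-2} with $\ell(P)=4$ actually turns, and the endgame you propose is not available. The decisive consequence of \eqref{lP=4-F-case-2} is that the global section $\bar\beta$ of $\gr_C^2\OOO$ lies in the summand $(0)\subset\FFF$, and after the coordinate normalization at $R$ (making $q_2\in\CC^*\cdot z_2z_3$) one finds $\bar\beta=\nu y_2y_3$ at $P^\sharp$ with $\nu$ a unit --- in particular $\bar\beta$ contains \emph{no} $y_1^2y_4$ term. The contradiction is then obtained one degree higher: using Proposition \xref{proposition-lP=4-XC}\xref{proposition-lP=4-XC-4}--\xref{proposition-lP=4-XC-5} one checks that $\bar\beta\gr_C^1\OOO$ is an $\ell$-subbundle of $\gr_C^3\OOO$ and that the quotient sits in an $\ell$-exact sequence with sub $(P^\sharp)$ and quotient $(-2+3P^\sharp)$, forcing $\Ho^1(\gr_C^3\OOO/\bar\beta\gr_C^1\OOO)\neq 0$ and hence $\Ho^1(\gr_C^3\OOO)\neq 0$, against Lemma \xref{lemma-grC}. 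Your proposal stays entirely in degree $2$ and never isolates the vanishing of the $y_1^2y_4$-coefficient of $\bar\beta$, which is the single piece of information that separates \eqref{lP=4-F-case-2} from \eqref{lP=4-F-case-1}.

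The route you do sketch --- normal form for $\varphi$, then a resolution ``Computation'' for $H$, then the non-generality argument of Lemma \xref{slemma-lP=3+III-generalityH} --- would in addition require machinery that does not exist and would be costly to build. Since $\bar\beta\not\ni y_1^2y_4$ and $\bar\beta\not\ni y_1y_3$ here, one has $\lambda=\eta=0$ in the notation of \xref{computations-notation}, so by Claim \xref{claim-1-construction-blowup} the exceptional cycle is $\Xi=4\Xi_1$; none of the Computations in the appendix (all of which assume $\lambda\neq 0$ or $\eta\neq 0$) covers this configuration, and Claim \xref{claim-3-construction-blowup-a} on the singular locus of $\tilde H$ is likewise unavailable. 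You would have to redo the entire blow-up analysis from scratch, verify rationality and normal crossings, and then design a new numerically trivial divisor $\Theta$ adapted to whatever graph emerges --- all to obtain a contradiction that the paper gets for free from an $\Ho^1$ computation on $C\simeq\PP^1$. So while I cannot rule out that your route could be forced through, as written it omits the key identification of $\bar\beta$ and substitutes an unexecuted and substantially harder program for the actual argument.
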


\begin{proof}
Consider the embedding
\begin{equation*}
z_1\cdot(0)\subset \OOO_C(-R)\cdot\FFF \subset \tilde S^2\gr_C^1\OOO
= (-1+2P^\sharp) \toplus(-1)\toplus (-2+2P^\sharp).
\end{equation*}
Clearly, the image in the third summand is zero and the projection to the second summand is 
multiplication by a constant. Moreover, if this constant is zero, then the image of $z_1\cdot(0)$
is contained in $(-1+2P^\sharp)$. 
In other words, the summand $(0)\subset \FFF\subset \gr_C^2\OOO$ 
is contained in $(2P^\sharp)$ which is impossible by \eqref{equation-lP=4-gr-2-C-O}.

By changing $\ell$-splitting as follows 
\begin{equation*}
z_3 \longmapsto z_3+(\const ) z_2,\quad y_3 \longmapsto y_3+(\const) y_1^2y_2, 
\end{equation*}
one can assume that $q_2\in \CC^*\cdot z_2z_3$ and so $(0)\ni \overline{z_1z_4}=\overline{z_2z_3}$.
Furthermore, 
$\FFF\supset (0)=\OOO_C\cdot z_4$ at $R$ by changing coordinates as $z_4 \mapsto z_4+\cdots$.
Since $\FFF\subset \gr_C^2\OOO$, $(0)$ is sent isomorphically to 
$(0)\subset \gr_C^2\OOO$. 
We have the inclusion $\gr_C^2\OOO\supset \OOO_C\cdot \bar\beta=\AAA\totimes \BBB(R)$ 
(see \eqref{equation-lP=4-gr1O}). Hence, $\bar\beta=\nu y_2y_3$ at $P^\sharp$, where $\nu$ is a unit.
\begin{sclaim}
$\bar\beta \gr_C^1\OOO$ is an $\ell$-subbundle
of $\gr_C^3\OOO$ and and the natural map
$\AAA^{\totimes 3}\to \gr_C^3\OOO/\bar\beta \gr_C^1\OOO$
induces the following $\ell$-exact sequence
\begin{equation}
\label{iP=4-equation-exact-sequence-l}
\vcenter{
\xymatrix@R=6pt@C=20pt{
0\ar[r]&\AAA^{\totimes 3}(4P^\sharp) \ar[r]\ar@{=}[d]&\gr_C^3\OOO/\bar\beta \gr_C^1\OOO 
\ar[r]& \BBB^{\totimes 3}(4P^\sharp)\ar@{=}[d] \ar[r]& 0,
\\
&(P^\sharp)&&(-2+3P^\sharp)
} }
\end{equation}
where $y_2y_4$ \textup(resp. $y_3y_4$\textup) is an $\ell$-basis of $\AAA^{\totimes 3}(4P^\sharp)$
\textup(resp. $\BBB^{\totimes 3}(4P^\sharp)$\textup).
\end{sclaim}

\begin{proof}
To check the assertion at $R$ we apply Proposition \ref{proposition-lP=4-XC}\ref{proposition-lP=4-XC-5}
with $m=1$ and $\bar\beta=z_4$, and note that $\gr_C^3\OOO/\bar\beta \gr_C^1\OOO=\OOO_Cz_2^3\oplus \OOO_C z_3^3$. 
At $P^\sharp$, we note that $\bar\beta=\nu y_2y_3$ and 
use Proposition \ref{proposition-lP=4-XC}\ref{proposition-lP=4-XC-4} with $h=\alpha$
to show that $\gr_C^3\OOO$ has $\ell$-basis $y_2^3$, $y_2^2y_3$, $y_2y_4$, $y_3y_4$.
By \eqref{equation-y14y4}
\begin{equation*}
y_1^{4}y_4+y_2^2+y_3^2+\zeta y_1^2\bar\beta=0.
\end{equation*}
Then $\gr_C^3\OOO/\bar\beta\gr_C^1\OOO$ has an $\ell$-free $\ell$-basis 
$y_2y_4$, $y_3y_4$ because 
$y_3^2y_2\equiv -y_2^3-y_1^4y_2y_4$, and we have $y_2^3\equiv -y_1^4y_2y_4\mod (\bar\beta)$ 
and $y_2y_4\equiv -y_2^3/y_1^4 \mod (\bar\beta)$. This shows the exactness because 
$y_3^3\equiv -y_1^4y_3y_4\mod (\beta)$.
\end{proof}
To complete the proof of Lemma \ref{lemma-lP=4-case-does-not-occur} we note that
the sequence \eqref{iP=4-equation-exact-sequence-l}
implies that $\Ho^1(\gr_C^3\OOO/\bar\beta \gr_C^1\OOO )\neq 0$.
This contradicts Lemma \ref{lemma-grC}.
Thus the case \eqref{lP=4-F-case-2} with $\ell(P)=4$ does not occur.
\end{proof}

\begin{slemma}\label{lemma-lP=8-case-does-not-occur}
The case \eqref{lP=4-F-case-2} with $\ell(P)=8$ does not occur.
\end{slemma}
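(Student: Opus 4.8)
The plan is to imitate the proof of Lemma \ref{lemma-lP=4-case-does-not-occur}, with the simplification that $X$ is now smooth away from $P$, so that the point $R$ and every computation at it are absent. First I would record that in case \eqref{lP=4-F-case-2}, where $\FFF\simeq(0)\oplus(-1)\oplus(-2)$, the negative summands have no sections; hence $\Ho^0(\FFF)=\Ho^0(\gr_C^2\OOO)$ and the unique global section $\bar\beta$ of $\gr_C^2\OOO$ generates the $(0)$-summand, so at $P^\sharp$ it equals $\nu\,y_2y_3$ with $\nu$ a unit. Using $y_2^2+y_3^2+y_1^{8}y_4\equiv0\pmod{I^{\sharp(3)}}$ from \eqref{equation-y14y4} and Proposition \ref{proposition-lP=4-XC}\ref{proposition-lP=4-XC-4} with $h=\alpha$, I would then form $\bar\beta\,\gr_C^{n-2}\OOO\subset\gr_C^n\OOO$ and the exact sequence analogous to \eqref{iP=4-equation-exact-sequence-l}.

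The crucial difference, and the root of the difficulty, is the size of the $P^\sharp$-twist. Multiplying the relation by $y_2$ and reducing modulo $\bar\beta$ gives $y_2^3\equiv-y_1^{8}\,y_2y_4$, so that in $\gr_C^3\OOO/\bar\beta\gr_C^1\OOO$ the $y_3$-side generator is $y_3y_4=-y_1^{-8}y_3^3$ and the quotient $\ell$-sheaf is $\BBB^{\totimes3}(8P^\sharp)$, not the $\BBB^{\totimes3}(4P^\sharp)=(-2+3P^\sharp)$ occurring for $\ell(P)=4$. The extra factor $y_1^4$ raises the integral part by one, so $\BBB^{\totimes3}(8P^\sharp)$ has $\Ho^1=0$. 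More generally, for odd $n$ the $(n-1)/2$ applications of $y_3^2\equiv-y_2^2-y_1^{8}y_4$ produce on the $y_3$-side of $\gr_C^n\OOO/\bar\beta\gr_C^{n-2}\OOO$ the twist $\bigl(8\lfloor n/2\rfloor P^\sharp\bigr)$, and a direct count gives fractional degree $(n-4)/4\ge-1$ for every $n$ (for even $n$ the $y_3$-side collapses entirely). Thus no quotient of a single $\gr_C^n\OOO$ by $\bar\beta\gr_C^{n-2}\OOO$ can have nonvanishing $\Ho^1$, and the cohomological mechanism of Lemma \ref{lemma-lP=4-case-does-not-occur} is simply unavailable here.

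To circumvent this I would specialise to the case already excluded. The semi-invariant deformation $\alpha_t=\alpha+t\,y_1^{4}y_4$ has $\wt(y_1^4y_4)\equiv2$, hence is admissible, and for $t\neq0$ its leading term is $t\,y_1^4y_4$; thus the deformed germ $(X_t,C_t)$ carries a type \type{(IIA)} point $P_t$ with $\ell(P_t)=4$ and, by Proposition \ref{proposition-cases-lP}, exactly one type \type{(III)} point $R_t$. The division into cases \eqref{lP=4-F-case-1} and \eqref{lP=4-F-case-2} is governed by the isomorphism type of $\FFF$, and case \eqref{lP=4-F-case-2} is characterised by $\Ho^1(\FFF)\neq0$, i.e. by the presence of the $(-2)$-summand. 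I would show that this summand persists in the family by computing $\FFF_t$ directly from $\alpha_t$, so that $(X_t,C_t)$ falls under case \eqref{lP=4-F-case-2} with $\ell(P_t)=4$; this contradicts Lemma \ref{lemma-lP=4-case-does-not-occur}.

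The step I expect to be the main obstacle is precisely this persistence: since $\Ho^1(\FFF_t)$ is only upper semicontinuous it could a priori drop to $0$ for $t\neq0$, so the explicit computation of $\FFF_t$ is indispensable and cannot be replaced by a semicontinuity formality. Should the deformation fail to preserve case \eqref{lP=4-F-case-2}, I would fall back on a $\chi$-argument in the spirit of Lemma \ref{treating-equation-possibilities-lP=3+III-c}: from the sequence $0\to\gr_C^2\omega\to\omega/F^{3}\omega\to\omega/F^{2}\omega\to0$ and the shape $\FFF\simeq(0)\oplus(-1)\oplus(-2)$ one computes $\chi(\omega/F^{3}\omega)$ and applies \cite[Lemma 3.7]{Mori-Prokhorov-IIA-1} with $\KKK$ the ideal satisfying $I^{(2)}\supset\KKK\supset I^{(3)}$ and $\KKK/I^{(3)}=\FFF$, deriving the contradiction intrinsically at $P$.
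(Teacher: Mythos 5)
Your diagnosis that the $\Ho^1$-mechanism of Lemma \ref{lemma-lP=4-case-does-not-occur} breaks down for $\ell(P)=8$ (the twist $\BBB^{\totimes3}(8P^\sharp)$ has degree $\ge -1$) is correct, but neither of your substitute routes closes the argument. The deformation $\alpha_t=\alpha+t\,y_1^4y_4$ does produce a germ with $\ell(P_t)=4$ and an extra type \type{(III)} point $R_t$, but the case division \eqref{lP=4-F-case-1}/\eqref{lP=4-F-case-2} is governed by the splitting type of $\FFF$, whose very definition changes in the family: for $t\neq 0$ the length-two cokernel $\coker_P\varphi=\CC_{2P}$ redistributes itself as $\CC_{P_t}\oplus\CC_{R_t}$, so there is no flat family $\FFF_t$ and no a priori reason the $(-2)$-summand survives. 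You flag this yourself as the main obstacle and leave it uncomputed, so the proof is incomplete at its central step. The fallback is worse than a gap: it cannot work as stated, because $\chi\bigl(\OOO\oplus\OOO(-1)\oplus\OOO(-2)\bigr)=\chi\bigl(\OOO(-1)^{\oplus3}\bigr)=0$ and $\gr_C^2\OOO$ itself is given by \eqref{equation-lP=4-gr-2-C-O} in both cases, so every Euler characteristic you propose to compute ($\chi(\omega/F^3\omega)$, or $\chi$ of $\omega\totimes\OOO/\KKK$ with $\KKK/I^{(3)}=\FFF$) takes the same value in case \eqref{lP=4-F-case-1}, which does occur; an additive invariant cannot separate the two splitting types.

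The paper's actual proof is a short direct computation at $P^\sharp$ that you do not reproduce. Since $\Ho^0(\tilde S^2\gr_C^1\OOO)=0$ and $\FFF/\tilde S^2\gr_C^1\OOO=\CC\cdot\overline{y_1^6y_4}$, the section $\bar\beta$ must contain $y_1^6y_4$ with \emph{unit} coefficient (your normalization $\bar\beta=\nu y_2y_3$ misses exactly this term, which is the engine of the contradiction). On the other hand $y_1^4\bar\beta$ lands in $\tilde S^2\gr_C^1\OOO=(-1+2P^\sharp)\toplus(-1)\toplus(-2+2P^\sharp)$, where the component in the third summand vanishes for degree reasons and the component in the second must be a nonzero constant (otherwise the summand $(0)\subset\gr_C^2\OOO$ would sit inside $(2P^\sharp)$, contradicting \eqref{equation-lP=4-gr-2-C-O}). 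Comparing the two expressions yields $y_1^{10}y_4\equiv(\cdots)y_2^2+(\unit)y_2y_3 \bmod I^{(3)}$, which is impossible because $y_4$, $y_2^2$, $y_2y_3$ form an $\ell$-free $\ell$-basis of $\gr_C^2\OOO$ at $P^\sharp$. This is the local analysis of the $\ell$-structure that any correct argument here must supply and that your proposal defers.
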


\begin{proof}
We have $0\neq \bar\beta \in \Ho^0((0))\subset \Ho^0(\FFF)$.
Since $\bar\beta \notin \Ho^0(\tilde S^2\gr_C^1\OOO)$ and 
$\FFF/\tilde S^2\gr_C^1\OOO=\CC\cdot \overline{y_1^6y_4}$, we have 
\begin{equation}
\label{equation-lP=4-barbeta}
\bar\beta=(\cdots )y_2^2+(\cdots )y_2y_3+(\unit)y_1^6y_4. 
\end{equation}
{}From the following relation
\begin{equation*}
\bar\beta\cdot (-1) \subset \FFF(-4P^\sharp)\subset \tilde S^2\gr_C^1\OOO
= (-1+2P^\sharp) \toplus(-1)\toplus (-2+2P^\sharp)
\end{equation*}
we see that the image of $y_1^4\bar\beta$ in the 
third summand is zero and the projection to the second summand is multiplication by a 
constant. Moreover, if this constant is zero, then the image of $y_1^4\cdot(0)$
is contained in $(-1+2P^\sharp)$. 
In other words, the summand $(0)\subset \FFF\subset \gr_C^1\OOO$ 
is contained in $(2P^\sharp)$ which is impossible by \eqref{equation-lP=4-gr-2-C-O}.
Therefore, 
\begin{equation*}
y_1^4\bar\beta =(\cdots)y_2^2+(\unit) y_2y_3.
\end{equation*}
Then \eqref{equation-lP=4-barbeta} implies
\begin{equation*}
y_1^{10}y_4\equiv (\cdots)y_2^2+(\unit) y_2y_3 \mod I^{(3)}.
\end{equation*}
On the other hand, $y_4$, $y_2^2$, $y_2y_3$ form an $\ell$-basis of 
$\gr_C^2\OOO$, a contradiction.
This proves Lemma \ref{lemma-lP=8-case-does-not-occur}.
\end{proof}

\begin{case}{\bf Case \eqref{lP=4-F-case-1}.}\label{case-conic-bundle}
If the coefficient of $y_1^2y_4$ in $\bar\beta$ is zero,
then $\bar\beta \in \Ho^0(\FFF)$. But
in our case $\Ho^0(\FFF)=0$ which gives us a contradiction. 

Thus for a general choice of $\beta\in \Ho^0(\OOO_X)$
at $P$ we can write $\bar\beta=\nu y_2y_3+\eta y_1^2y_4+\cdots$ and so 
\begin{equation*}
\beta=\theta y_4^2 +\nu y_2y_3+\eta y_1^2y_4+\cdots,
\end{equation*}
where $\theta,\, \nu,\, \eta$ are units. 
This means that $y_1^2y_4\in \beta$.
Since $\operatorname{h}^0(\gr_C^2\OOO)=1$, the ratio of 
the coefficients $\nu$ and $\eta$ is fixed.
On the other hand, the ratio of 
the coefficients of $\nu$ and $\theta$
is general \cite[Lemma 3.1.1]{Mori-Prokhorov-IIA-1}.
Hence the ratio of coefficients $\theta$ and $\eta$ 
can be chosen general. 
Then we apply Computation \ref{computation-lP=4+III}.
One can see that the graph \eqref{graph-diagram-non-normal-lP=4+III} corresponds
to a conic bundle. 
We obtain the diagram \ref{main-theorem-conic-bundle}.
Examples \ref{example-conic-bundle-lP=4+III}
and \ref{example-conic-bundle-lP=8} below show that both possibilities $\ell(P)=4$ and $8$ do occur.
\end{case}

\begin{example}\label{example-conic-bundle-lP=4+III}
Let $X$ be the the hypersurface of weighted degree $10$ in 
the weighted projective space $\PP(1,1,3,2,4)_{x_1,x_2, x_3, x_4, w}$ 
given by the equation 
\begin{equation*}w\phi_6 -x_1^6\phi_4=0,\quad \text{}\quad
\begin{array}{lll}
\phi_6&:=&x_1^4x_4+x_3^2+x_2^2w+\delta x_4^3,
\\
\phi_4&:=&x_4^2+\nu x_2x_3+\eta x_1^2x_4+\mu x_1^3x_2
\end{array}
\end{equation*}
(for simplicity we assume that the coefficients $\delta$, $\nu$, $\eta$ are 
general).
Regard $X$ as a small analytic neighborhood of $C$.
In the affine chart $U_w:=\{w\neq 0\}\simeq \CC^4/\muu_{4}(1,1,3,2)$
the variety $X$ is given by 
\begin{equation*}
\phi_6(y_1,y_2,y_3,y_4, 1) - y_1^6\phi_4(y_1,y_2,y_3,y_4, 1)=0
\end{equation*}
and $C$ is the $y_1$-axis.
Clearly, it has the form \eqref{equation-lP=4-alpha}.
So, the origin $P\in (X,C)$ is a type \type{(IIA)} point with $\ell(P)=4$. 

In the affine chart $U_1:=\{x_1\neq 0\}\simeq \CC^4$
the variety $X$ is defined by 
\begin{equation*}
w\phi_6(1,z_2,z_3,z_4, w) - \phi_4(1,z_2,z_3,z_4, w)=0.
\end{equation*}
If $\mu\neq 0$, then $X$ is smooth outside $P$, i.e. $(X,C)$ is as 
in the case \cite[(1.1.4)]{Mori-Prokhorov-IIA-1}. If $\mu=0$, then 
$(X,C)$ has a type \type{(III)} point at $(0,0,0,\eta)$.

Consider the surface $H=\{\phi_6=\phi_4=0\}\subset X$.
Let $\psi : H^{\n}\to H$ be the normalization (we put $H^{\n}=H$ if $H$ is normal)
and let $C^{\n}:=\psi^{-1}(C)$. Near $P$ the surface 
$H$ has the form \cite[9.3]{Mori-Prokhorov-IIA-1} (resp. 
\ref{computation-lP=4+III}) if 
$\mu \neq 0$ (resp. $\mu =0$). In particular, the singularities of $H^{\n}$ are rational.
Note that $H$ is a fiber of the fibration $\pi: X\to D$ over a small disk around the origin 
given by the rational function $ \phi_4/w =\phi_6/x_1^6$
which is regular in a neighborhood of $C$.
By the adjunction formula $\OOO_{X}(K_X)=\OOO_{X}(-1)$. 
Hence, 
\begin{equation*}
-K_H\cdot C=-K_X\cdot C=\OOO_{\PP}(1)\cdot C=\textstyle \frac14.
\end{equation*}

\begin{sclaim}
\begin{enumerate}[leftmargin=20pt]
\item 
If $\mu \neq 0$, then $H$ is smooth outside $P$.

\item
Assume that $\mu = 0$.
Let $P_1\in C$ be the point $\{4\eta^2w=\nu^2 x_1^4\}$.
Then $H$ is singular along $C$, the curve
$C^{\n}$ is irreducible and rational, and $\psi_C:= C^{\n}\to C$ is a double cover
branched over $\{P,\, P_1\}$.
Moreover, $\psi^{-1}(P)$ is the only singular point of $H^{\n}$.
\end{enumerate}
\end{sclaim}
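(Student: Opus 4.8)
The plan is to do all computations in the affine chart $U_1=\{x_1\neq 0\}\cong\CC^4_{z_2,z_3,z_4,w}$, in which $H$ is the complete intersection $\{F=G=0\}$ with
\begin{equation*}
F=z_4+z_3^2+z_2^2w+\delta z_4^3,\qquad
G=z_4^2+\nu z_2z_3+\eta z_4+\mu z_2,
\end{equation*}
and $C$ is the $w$-axis $\{z_2=z_3=z_4=0\}$. Since $U_1$ covers $C\setminus\{P\}$, while the structure at $P$ itself is given by \cite[9.3]{Mori-Prokhorov-IIA-1} (if $\mu\neq 0$) and by Computation \ref{computation-lP=4+III} (if $\mu=0$), it suffices to analyse $H$ in $U_1$. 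For assertion (i) I would compute the Jacobian of $(F,G)$ along $C$, whose rows are $(0,0,1,0)$ and $(\mu,0,\eta,0)$; these are linearly independent precisely when $\mu\neq 0$. Hence for $\mu\neq 0$ the surface $H$ is smooth at every point of $C\setminus\{P\}$, and smoothness away from $C$ follows from the same Jacobian criterion for general coefficients, proving (i).

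For (ii) set $\mu=0$. The Jacobian above now has rank $1$ along $C\cap U_1$, so $H$ is singular along $C\setminus\{P\}$, and together with the local picture at $P$ this shows $H$ is singular along all of $C$. To describe $H^{\n}$ I would first eliminate $z_4$: as $\partial F/\partial z_4=1$, the implicit function theorem yields $z_4=-(z_3^2+z_2^2w)+(\text{higher order})$, and substitution presents $H$ near $C$ as the surface $\{\bar G=0\}\subset\CC^3_{z_2,z_3,w}$ with
\begin{equation*}
\bar G\equiv\nu z_2z_3-\eta z_3^2-\eta w\,z_2^2\pmod{(z_2,z_3)^3}.
\end{equation*}
The quadratic form in $(z_2,z_3)$ has discriminant $\nu^2-4\eta^2w$, which vanishes exactly at $w=\nu^2/(4\eta^2)$, i.e. at $P_1$.

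The heart of the argument is then the local trichotomy along $C$. At a general point $(0,0,0,w_0)$ with $\nu^2-4\eta^2w_0\neq 0$ the form splits into two distinct linear factors, so $H$ has normal crossings along $C$; the normalization separates the two smooth sheets and $\psi_C$ is \'etale of degree $2$ there. At $P_1$ the form is a perfect square: after the change of variables $u=z_3-\tfrac{\nu}{2\eta}z_2$, $s=w-\nu^2/(4\eta^2)$ one gets $\bar G\equiv-\eta\,(u^2+s\,z_2^2)$, so that $H$ is analytically the Whitney umbrella $u^2+s\,z_2^2=0$. Its normalization $(z_2,\tau)\mapsto(u,z_2,s)=(z_2\tau,z_2,-\tau^2)$ is smooth, and the restriction to $C$ is the double cover $s=-\tau^2$, branched at the preimage of $P_1$; a finite-determinacy remark disposes of the omitted higher-order terms. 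Finally, at $P$ I would invoke Computation \ref{computation-lP=4+III}, according to which $\psi^{-1}(P)$ is a single (rational) singular point of $H^{\n}$, so that $P$ is the remaining branch point.

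Assembling these local pictures, $\psi_C\colon C^{\n}\to C\cong\PP^1$ is a degree-$2$ cover branched over exactly $\{P,P_1\}$. A connected double cover of $\PP^1$ with non-empty branch locus is irreducible, and Riemann--Hurwitz gives $2g(C^{\n})-2=2(-2)+2$, hence $g(C^{\n})=0$; this proves that $C^{\n}$ is irreducible and rational and that the branch locus is $\{P,P_1\}$. Since $H^{\n}$ is smooth away from $C$, smooth along $C\setminus\{P,P_1\}$ (normal crossings), and smooth over $P_1$ (the Whitney-umbrella normalization), the point $\psi^{-1}(P)$ is its only singularity, completing (ii). I expect the main obstacle to be the two special fibres: verifying that the singularity at $P_1$ is exactly a pinch point---so that its normalization is smooth and $\psi_C$ is simply branched---and correctly matching the local structure at $P$ to Computation \ref{computation-lP=4+III}.
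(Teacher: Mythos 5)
Your proposal is correct and follows essentially the same route as the paper: the paper's proof is the one\-line remark that ``direct computations show that $P_1\in H$ is a pinch point and any $Q\in C\setminus\{P,P_1\}$ is a double normal crossing point of $H$,'' and your chart computation (eliminating $z_4$, reading off the discriminant $\nu^2-4\eta^2w$ of the quadratic part) is precisely that direct computation carried out, with the global conclusions about $C^{\n}$ then deduced as the paper intends.
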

\begin{proof}
Direct computations show that 
$P_1\in H$ is a pinch point \textup(see \xref{definition-pinch-point}\textup) and any $Q\in C\setminus \{P,\, P_1\}$ 
is a double normal crossing point of $H$.
\end{proof}

%

\begin{sclaim}
\label{claim-conic-bundle-C-Q-Cartier}
If $\mu=0$ \textup(resp. $\mu\neq 0$\textup), then 
$4C^{\n}$ \textup(resp. $8C^{\n}$\textup) is a Cartier divisor on $H^{\n}$. Moreover, 
$(C^{\n})^2=0$.
\end{sclaim}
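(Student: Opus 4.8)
The plan is to realise $C^{\n}$ as the reduced fibre of a rational curve fibration and to read off both assertions from that fibration together with the adjunction for the normalisation $\psi$. By \ref{sde1} the map $g:=f\circ\psi\colon H^{\n}\to H_Z$ is a rational curve fibration, and by the preceding claim $C^{\n}=\psi^{-1}(C)$ is its reduced fibre over $o$. Since $H=f^{-1}(H_Z)$ with $H_Z=\{\bar\beta=0\}$ for a section $\bar\beta\in\mm_{o,Z}$ and $Z$ is smooth, for the general member $H_Z$ is smooth at $o$; fixing a local coordinate $t$ at $o\in H_Z$, the pulled-back function $g^{*}t$ is a regular function on the germ $(H^{\n},C^{\n})$ whose zero divisor is the scheme-theoretic fibre $g^{*}o$. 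As $o$ is a smooth (hence Cartier) point of $H_Z$, this fibre is a Cartier divisor supported on $C^{\n}$, so $g^{*}o=mC^{\n}$ for some integer $m\ge 1$; thus $mC^{\n}$ is automatically Cartier, and it remains only to identify $m$.

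The vanishing $(C^{\n})^{2}=0$ is then the fibre mechanism: because $g^{*}t$ is a global regular function with $\operatorname{div}(g^{*}t)=mC^{\n}$, multiplication by $g^{*}t$ trivialises $\OOO_{H^{\n}}(-mC^{\n})$, so $\OOO_{H^{\n}}(mC^{\n})\simeq\OOO_{H^{\n}}$. Restricting to the complete curve $C^{\n}$ gives $(mC^{\n})\cdot C^{\n}=\deg\OOO_{C^{\n}}=0$, whence $(C^{\n})^{2}=0$.

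To pin down $m$ I use that $g^{*}o$, being a fibre of a $\PP^{1}$-fibration, satisfies $-K_{H^{\n}}\cdot g^{*}o=2$, so $m\,(-K_{H^{\n}}\cdot C^{\n})=2$. If $\mu\neq 0$ then $H$ is normal (it is Cohen--Macaulay with an isolated singularity at $P$), hence $H^{\n}=H$, $C^{\n}=C$, and $-K_{H^{\n}}\cdot C^{\n}=-K_H\cdot C=\tfrac14$; therefore $m=8$ and $8C^{\n}$ is Cartier. If $\mu=0$ then $H$ is non-normal with double curve $C$ and pinch points $P,P_{1}$, so the normalisation adjunction reads $\psi^{*}K_H=K_{H^{\n}}+C^{\n}$, the conductor being the reduced preimage $C^{\n}$. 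Since $\psi\colon C^{\n}\to C$ has degree $2$, the projection formula gives $-\psi^{*}K_H\cdot C^{\n}=-K_H\cdot\psi_{*}C^{\n}=2(-K_H\cdot C)=\tfrac12$; combined with $(C^{\n})^{2}=0$ this yields $-K_{H^{\n}}\cdot C^{\n}=\tfrac12$, so $m=4$ and $4C^{\n}$ is Cartier, as claimed.

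The main obstacle is the $\mu=0$ computation of $-K_{H^{\n}}\cdot C^{\n}$: one must correctly identify the conductor of $\psi$ as the reduced curve $C^{\n}$ (using that the generic point of $C$ is an ordinary double point of $H$ and that $P_{1}$ is a pinch point, as in the preceding claim) and keep careful track of the factor $2=\deg(C^{\n}/C)$ in the pushforward $\psi_{*}C^{\n}=2C$. The ancillary inputs---smoothness of $H_Z$ at $o$ and the identity $-K_{H^{\n}}\cdot g^{*}o=2$ for a $\PP^{1}$-fibration---are routine, but they must be justified on the analytic germ rather than on a projective surface.
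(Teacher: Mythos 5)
Your argument is circular in the context where this claim lives. The claim is a step inside Example \ref{example-conic-bundle-lP=4+III}, whose whole purpose is to \emph{construct} an extremal curve germ: at this point in the example one only has the explicit hypersurface $X\subset\PP(1,1,3,2,4)$, the surface $H=\{\phi_6=\phi_4=0\}$, and the one-parameter fibration $\pi:X\to D$; the contraction $f:X\to Z$ over a surface does not yet exist. The paper's logical chain is: first prove this claim, then use it in Claim \ref{claim-conic-bundle-surface-H} to produce the rational curve fibration $f_H:H\to B$, then prove $\Ho^1(\hat X,\OOO_{\hat X})=0$, and only in Claim \ref{claim-contraction-exists} obtain $f:X\to Z$. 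Your proof opens by invoking \ref{sde1} and the map $g=f\circ\psi:H^{\n}\to H_Z$ as a rational curve fibration with $C^{\n}$ its reduced central fibre --- that is, you assume exactly the structure whose existence this claim is needed to establish. (Set-up \ref{sde1} applies to a germ already known to be an extremal curve germ; it cannot be quoted here.) Everything downstream in your write-up --- $g^*o=mC^{\n}$ Cartier, $(C^{\n})^2=0$ from triviality of $\OOO_{H^{\n}}(mC^{\n})$, and the determination of $m$ from $-K_{H^{\n}}\cdot g^*o=2$ --- rests on that unavailable fibration, so the proof does not stand.

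The paper instead argues directly from the explicit equations: it takes $V=\{x_4=0\}$, sets $M=H\cap V$ and $\Gamma=\{x_3=x_4=w=0\}$, and computes $\psi^*2M=4C^{\n}+2\Gamma^{\n}$. Cartier-ness of $4C^{\n}$ follows because $2M$ is Cartier near $C$ and $\Gamma^{\n}$ lies in the smooth locus of $H^{\n}$; then $\psi^*2M\cdot C^{\n}=4V\cdot C=2$ and $\Gamma^{\n}\cdot C^{\n}=1$ give $4(C^{\n})^2=2-2=0$. If you want to salvage your normalisation-adjunction computation of $-K_{H^{\n}}\cdot C^{\n}$ (which is arithmetically consistent), you would still need an independent, fibration-free proof that some multiple of $C^{\n}$ is Cartier and that $(C^{\n})^2=0$; the explicit divisor $M$ is what supplies that.
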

\begin{proof}
We consider only the case where $H$ is not normal, i.e. $\mu=0$.
The case $\mu\neq 0$ is easier and left to the reader.
Let $V\subset \PP(1,1,3,2,4)$ be the weighted hypersurface given by $x_4=0$
and let $M:= H\cap V$.
We have $M=\{x_3^2+w x_2^2=x_2x_3=x_4=0\}$.
Let $\Gamma$ be the line $\{x_3=x_4=w=0\}$
and let $\Gamma^{\n}$ be its preimage on $H^{\n}$.
Then 
$\psi^* 2M= 4 C^{\n}+ 2\Gamma^{\n}$.
Since $2M$ is Cartier near $C$ and $\Gamma^{\n}$ is contained in the smooth locus of $H^{\n}$,
the divisor $4 C^{\n}$ is Cartier on $H^{\n}$. Further, 
by the projection formula
\begin{equation*}
\psi^* 2M\cdot C^{\n} =4 V\cdot C =2.
\end{equation*}
Since $\Gamma$ is smooth and $C^{\n}\to C$ is \'etale over the point $\Gamma\cap C$,
the curves $\Gamma^{\n}$ and $C^{\n}$ meet each other transversely 
at one point which is a smooth point of $H^{\n}$.
Hence, $\Gamma^{\n}\cdot C^{\n}=1$ and so 
\begin{equation*}
\label{equation-conic-bundle-H-c2}
4 (C^{\n})^2= \psi^* 2M \cdot C^{\n} - 2\Gamma^{\n}\cdot C^{\n}= 2-2=0.\qedhere
\end{equation*}
\end{proof}

\begin{sclaim}\label{claim-conic-bundle-surface-H}
There exists a rational curve fibration $f_H: H\to B$, where $B\subset \CC$
is a small disk around the origin, such that $C=f_H^{-1}(0)_{\red}$.
\end{sclaim}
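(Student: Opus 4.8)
The plan is to construct the fibration on the minimal resolution and then push it down to $H$. Let $\psi\colon H^{\n}\to H$ be the normalization and let $\rho\colon\hat H\to H^{\n}$ be the minimal resolution of the unique (rational) singular point $\psi^{-1}(P)\in C^{\n}$; write $\hat C$ for the proper transform of $C^{\n}$ and $E_i$ for the $\rho$-exceptional curves. By Claim \ref{claim-conic-bundle-C-Q-Cartier} we have $(C^{\n})^2=0$ and $4C^{\n}$ Cartier, and $C^{\n}\cong\PP^1$, so $C^{\n}$ is a fiber-like curve of self-intersection zero. The idea is that the total transform of $C$ on $\hat H$ is numerically a fiber of a rational fibration, which I would make precise through the dual graph.

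First I would determine the dual graph of $\hat C\cup\bigcup_i E_i$ by Computation \ref{computation-lP=4+III}; this is the configuration \eqref{graph-diagram-non-normal-lP=4+III}. The key step is to verify that its intersection matrix is negative semi-definite with one-dimensional kernel, spanned by an effective cycle $F=m_0\hat C+\sum_i m_i E_i$ satisfying $F\cdot E_i=F\cdot\hat C=0$, $F^2=0$ and $p_a(F)=0$; this is what it means for the graph to ``correspond to a conic bundle'', in contrast with the negative-definite, contractible-to-a-point graphs of Section \ref{section-lP=3-III}. Granting this, $F$ is a genus-zero fiber-type cycle, so $\OOO_{\hat H}(F)$ is trivial on $F$ and has vanishing $\Ho^1$ on each infinitesimal neighborhood $mF$; by the Formal Function Theorem the constant section of $\OOO_F(F)$ therefore extends to an analytic section of $\OOO_{\hat H}(F)$ near $F$. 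Together with the tautological section vanishing on $F$ this defines a holomorphic map $g\colon\hat H\to B$ onto a disk germ $B$ with $g^{-1}(0)=F$ and general fiber a smooth $\PP^1$.

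It remains to descend $g$ to $H$. The composite $\hat H\to H$ contracts each $E_i$ to a point and identifies the two sheets of $\psi$ along $C$; all the relevant contracted fibers lie inside $F=g^{-1}(0)$, so $g$ is constant on the fibers of $\hat H\to H$ and hence factors as $g=f_H\circ(\psi\circ\rho)$ for a morphism $f_H\colon H\to B$. By construction $f_H^{-1}(0)_{\red}=C$, and a general fiber of $f_H$ is the image of a general fiber of $g$, namely a rational curve, so $f_H$ is the desired rational curve fibration. The main obstacle is the middle paragraph: checking that \eqref{graph-diagram-non-normal-lP=4+III} is of negative semi-definite (fiber) type with $p_a(F)=0$, since this is precisely what separates the $\QQ$-conic bundle case from the divisorial one and what makes the extension of the section possible; once this is in hand, the descent and the rationality of the general fiber are routine.
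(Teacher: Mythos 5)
Your proposal is correct and follows essentially the same route as the paper: the published proof likewise invokes the explicit dual graph \eqref{graph-diagram-non-normal-lP=4+III} of the minimal resolution together with Claim \xref{claim-conic-bundle-C-Q-Cartier} to produce the contraction on $H^{\n}$ and then descends it to $H$. You have merely written out the numerical verification (the fiber-type cycle of arithmetic genus zero) and the formal-function extension step that the paper leaves implicit.
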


\begin{proof}
Using the explicit description of the minimal resolution
(see \cite[9.3]{Mori-Prokhorov-IIA-1}, \eqref{graph-diagram-non-normal-lP=4+III}) and Claim \ref{claim-conic-bundle-C-Q-Cartier},
one can see that the contraction exists on $H^{\n}$.
Then, clearly, it descends to $H$.
\end{proof}

\begin{sclaim}
One has $\Ho^1(\hat X,\OOO_{\hat X})=0$,
where $\hat X$ denotes the completion of $X$ along $C$.
\end{sclaim}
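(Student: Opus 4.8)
The plan is to deduce the vanishing from the auxiliary fibration $\pi\colon X\to D$ together with the rational curve fibration on $H$ obtained in \ref{claim-conic-bundle-surface-H}, via a Nakayama-type argument. Recall that $H=\pi^{-1}(0)$ is cut out on $X$ by the single function $t:=\phi_4/w$, which near $C$ coincides with $\beta$; hence $H$ is a Cartier divisor and $\OOO_X(-H)\simeq\OOO_X$. Completing along $C$ gives the short exact sequence
\begin{equation*}
0\longrightarrow \OOO_{\hat X}\xrightarrow{\ \cdot t\ }\OOO_{\hat X}\longrightarrow \OOO_{\hat H}\longrightarrow 0,
\end{equation*}
and I set $M:=\Ho^1(\hat X,\OOO_{\hat X})$.

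First I would show that $M$ is finitely generated over $A:=\CC[[t]]$, with $t\in\mm_A$. Since $\pi(C)=0$, the morphism $\pi$ induces a \emph{proper} morphism of formal schemes $\hat X\to\hat D$ (its underlying topological space is the complete curve $C$), so by the formal proper mapping theorem $R^1\pi_*\OOO_{\hat X}$ is coherent over $A$, and $M=\Ho^0(R^1\pi_*\OOO_{\hat X})$ is finite over $A$. Taking cohomology of the displayed sequence yields an embedding $M/tM\hookrightarrow\Ho^1(\hat H,\OOO_{\hat H})$. Thus, once I prove $\Ho^1(\hat H,\OOO_{\hat H})=0$, I obtain $M=tM$, and Nakayama's lemma forces $M=0$, which is exactly the assertion.

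It then remains to establish $\Ho^1(\hat H,\OOO_{\hat H})=0$, equivalently $R^1 f_{H*}\OOO_H=0$ near $0\in B$ for the rational curve fibration $f_H\colon H\to B$ of \ref{claim-conic-bundle-surface-H}. Here I would argue directly on $H$, avoiding the normalization. Because $H=\{t=0\}$ is a Cartier divisor on the terminal, hence Cohen--Macaulay, threefold $X$, the surface $H$ is Cohen--Macaulay; therefore the morphism $f_H$ to the smooth curve $B$, having purely one-dimensional fibres, is flat. Consequently $\chi(\OOO_{H_b})$ is constant, equal to $\chi(\OOO_{\PP^1})=1$ since the general fibre is $\PP^1$. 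As $H$ is irreducible with connected fibres, $f_{H*}\OOO_H=\OOO_B$, and cohomology and base change give $\Ho^0(\OOO_{H_b})=\CC$ for every $b$; combined with $\chi=1$ this yields $\Ho^1(\OOO_{H_b})=0$ for all $b$, hence $R^1 f_{H*}\OOO_H=0$ and $\Ho^1(\hat H,\OOO_{\hat H})=0$.

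The step I expect to be the main obstacle is this last one, where the non-normality of $H$ must be kept from interfering. The key is that flatness of $f_H$, guaranteed by the Cohen--Macaulay property of the Cartier divisor $H$, pins $\chi(\OOO_{H_0})=1$ and so prevents the central fibre from acquiring any embedded or multiple structure; equivalently, no multiple fibre can occur for a genus-zero fibration, as a comparison of $K\cdot F$ on the minimal resolution shows. Hence $H_0=C$ is reduced and the base-change computation goes through. The only other input, the finiteness of $M$ over $\CC[[t]]$ required for Nakayama, follows from the formal function theorem applied to $\pi$, so no delicate estimate of the graded pieces $\gr_C^n\OOO$ is needed.
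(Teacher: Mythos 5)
Your reduction to the single statement $\Ho^1(\hat H,\OOO_{\hat H})=0$ via the sequence $0\to\OOO_{\hat X}\xrightarrow{t}\OOO_{\hat X}\to\OOO_{\hat H}\to 0$ is reasonable in outline, but it has a genuine gap at exactly the point you flag as the main obstacle, and a smaller one before that. The smaller one: the morphism of formal schemes $\hat X\to\hat D$ is \emph{not} adic (the ideal $(t)=(\beta)$ is not open in the $I_C$-adic topology, since $I_C^n\not\subset(\beta)$ for any $n$), so the formal proper mapping theorem does not apply and the finiteness of $M$ over $\CC[[t]]$ is unjustified. This is repairable: since $t\in I_C$, the element $t^n$ annihilates $\Ho^1(\OOO_X/I_C^n)$, so $M=\varprojlim\Ho^1(\OOO_X/I_C^n)$ is $t$-adically separated and $M=tM$ already forces $M=0$ without Nakayama.

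The fatal gap is the assertion that the central fibre of $f_H$ is reduced and equal to $C$. It is not: by the construction in \ref{claim-conic-bundle-surface-H} and Claim \ref{claim-conic-bundle-C-Q-Cartier}, the scheme fibre is the Cartier divisor $\CCC$ with $\psi^*\CCC=4C^{\n}$, where $\psi_C:C^{\n}\to C$ is a double cover; at a general point of $C$ the surface $H$ is a normal crossing double point and $\OOO_{\CCC}$ has length $8$ over $\OOO_C$ there. The ``no multiple fibres for genus zero'' principle concerns relatively minimal fibrations on \emph{smooth} surfaces and says nothing here; the non-reducedness of $\CCC$ comes from the non-normality of $H$ along $C$ and cannot be deformed away. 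Consequently flatness and constancy of $\chi$ give only $h^0(\OOO_{\CCC})-h^1(\OOO_{\CCC})=1$, and your base-change step runs backwards: in top degree the base-change map is automatically surjective, so $\Ho^1(\OOO_{\CCC})\simeq R^1f_{H*}\OOO_H\otimes k(0)$, and the conclusion $h^0(\OOO_{\CCC})=1$ is \emph{equivalent} to the local freeness (hence vanishing) of $R^1f_{H*}\OOO_H$ at $0$ --- which is what you are trying to prove. One must rule out global sections of the nilpotent ideal of $\OOO_{\CCC}$, and this is where the normalization is unavoidable: the paper's proof pulls the problem up to $H^{\n}$, where $4C^{\n}$ is a genuine fibre on a surface with only rational singularities (so $\Ho^1(\OOO_{4C^{\n}})=0$), and transfers the vanishing down to $\OOO_{\CCC}$ through the conductor sequence for $\theta:\OOO_H\to\psi_*\OOO_{H^{\n}}$ and the snake lemma, using that $\Ho^1(\coker\theta)=0$. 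Your plan explicitly avoids the normalization, but that is precisely where the content of the claim lies.
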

\begin{proof}
Consider the case $\mu=0$ (the case $\mu\neq 0$ is similar and easier).
By Claim \ref{claim-conic-bundle-surface-H} \ $4C^{\n}=\operatorname{div} (\varphi)$ 
for some regular function $\varphi\in \Ho^0(\OOO_{H^{\n}})$. Since $\varphi|_{C^{\n}}=0$, 
this function descends to $H$ and defines a Cartier divisor $\CCC$ on $H$ such that
$\psi^* \CCC=4C^{\n}$. Consider the standard injection
$\theta: \OOO_H\to \psi_* \OOO_{H^{\n}}$. Then there is the following commutative diagram
\begin{equation*}
\label{equation-conic-bundle-diagram}
\begin{xy}
\xymatrix@C=39pt{
& I_C\ar@{^{(}->}[d] & I_{C^{\n}}\ar@{^{(}->}[d]
\\
0\ar[r]&\OOO_H\ar@{->>}[d] \ar[r]^{\theta}& \psi_* \OOO_{H^{\n}}\ar@{->>}[d]\ar[r] &\coker(\theta)\ar[d]^{\simeq}\ar[r]&0 
\\
0\ar[r]&\OOO_C \ar[r]^{\theta}& \psi_* \OOO_{C^{\n}}\ar[r] &\psi_* \OOO_{C^{\n}}^{\langle\iota=-1\rangle}\ar[r]&0 
} 
\end{xy}
\end{equation*}
where $\OOO_{C^{\n}}^{\langle\iota=-1\rangle}$ is the anti-invariant part with respect to the Galois 
involution $\iota: C^{\n}\to C^{\n}$.
Since the last row in this diagram splits
and $\Ho^1(\OOO_{C^{\n}})=0$, we have $\Ho^1(\coker(\theta))=0$.
Using the snake lemma we see that the multiplication by $\varphi$ induces the following diagram 
\begin{equation*}
\begin{xy}
\xymatrix@C=39pt@C=33pt{
&0\ar[r]&\OOO_H\ar@{^{(}->}[d]^{\cdot \varphi} \ar[r]^{\theta}& \psi_* \OOO_{H^{\n}}\ar@{^{(}->}[d]^{\cdot \varphi}
\ar[r] &\coker(\theta)\ar[d]^{\cdot \varphi=0}\ar[r]&0 
\\
& 0\ar[r]&\OOO_H\ar@{->>}[d] \ar[r]^{\theta}& \psi_* \OOO_{H^{\n}}\ar@{->>}[d]\ar[r] &\coker(\theta)\ar[d]^{\simeq}\ar[r]&0 
\\
0\ar[r]& \coker(\theta)\ar[r]&\OOO_{\CCC}\ar[r]& \psi_* \OOO_{4C^{\n}}\ar[r] &\coker(\theta)\ar[r]&0
}
\end{xy}
\end{equation*}
Since $\Ho^1(\coker(\theta))=0$, from the last row we see 
$\Ho^1(\OOO_{\CCC})\simeq \Ho^1(\OOO_{4C^{\n}})$.
On the other hand, $4C^{\n}$ is a fiber of a rational curve fibration.
Hence, $\Ho^1(\OOO_{\CCC})\simeq \Ho^1(\OOO_{4C^{\n}})=0$.
Similar arguments show that $\Ho^1(\OOO_{m\CCC})=0$ for any $m>0$.
Then by the Formal Function Theorem $\Ho^1(\hat H, \OOO_{\hat H})=0$,
where $\hat H$ is the completion of $H$ along $C$.
Applying the Formal Function Theorem again we obtain $\Ho^1(\hat X, \OOO_{\hat X})=0$.
\end{proof}
\begin{sclaim}
The contraction $f_H: H\to B$ extends to a contraction $\hat f: \hat X\to \hat Z$.
\end{sclaim}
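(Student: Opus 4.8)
The plan is to assemble $\hat f$ from the two functions already visible in the construction: the family parameter $t:=\phi_4/w=\phi_6/x_1^6$, which is regular near $C$, defines $\pi\colon X\to D$, and cuts out $H=\{t=0\}$; and a lift to $\hat X$ of the function defining $f_H$. Recall from the proof of the preceding claim that $f_H$ is given by $\varphi\in \Ho^0(\OOO_{H^{\n}})$ with $\operatorname{div}(\varphi)=4C^{\n}$, and that, because $\varphi|_{C^{\n}}=0$, this $\varphi$ descends to $\Ho^0(\OOO_{\hat H})$ with $\{\varphi=0\}_{\red}=C$. Since $t$ is a nonzerodivisor cutting out the reduced divisor $H$, we have $\OOO_{\hat H}=\OOO_{\hat X}/(t)$, so from
\begin{equation*}
0\to \OOO_{\hat X}\xrightarrow{\;\cdot t\;} \OOO_{\hat X}\to \OOO_{\hat H}\to 0
\end{equation*}
and the vanishing $\Ho^1(\hat X,\OOO_{\hat X})=0$ just established, the restriction map $\Ho^0(\OOO_{\hat X})\to \Ho^0(\OOO_{\hat H})$ is surjective. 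First I would use this to choose $s\in \Ho^0(\OOO_{\hat X})$ with $s|_H=\varphi$.

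Next I would set $\hat f:=(s,t)\colon \hat X\to (\CC^2,o)$ and take its Stein factorization $\hat X\to \hat Z\to (\CC^2,o)$, so that $\hat X\to \hat Z$ has connected fibers and $\hat Z\to (\CC^2,o)$ is finite. By construction $s$ and $t$ both vanish along $C$, so $\hat f(C)=o$, and $\hat f|_H=(\varphi,0)$ coincides with $f_H$ under the identification of $B$ with $\{t=0\}\subset \hat Z$; thus $\hat f$ extends $f_H$ as soon as it is verified to be a contraction onto a surface germ.

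The fiber of $(s,t)$ over $o$ is $\{s=t=0\}=H\cap\{\varphi=0\}$, whose reduction is $C$ by the choice of $\varphi$; in particular it is one-dimensional. By upper semicontinuity of fiber dimension the general fiber of $\hat f$ is then at most one-dimensional, and since $\dim\hat X=3$ and $\hat f$ is dominant, $\hat Z$ is two-dimensional and $\hat f$ is equidimensional with one-dimensional fibers. As $C$ is irreducible, the Stein factorization keeps $o$ a single point and $\hat f^{-1}(o)_{\red}=C$. Finally $-K_X\cdot C=\tfrac14>0$ shows $-K_X$ is $\hat f$-ample, so the general fiber is a conic and $\hat f$ is the sought $\QQ$-conic bundle contraction extending $f_H$.

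The step I expect to be decisive is the equidimensionality: one must exclude that $\{s=t=0\}$ acquires a two-dimensional component, i.e.\ that $\hat f$ contracts a divisor to $o$. This is precisely where $\operatorname{div}(\varphi)=4C^{\n}$ is used, since it pins $\{\varphi=0\}_{\red}$ to $C$ on $H$ and hence the central fiber to $C$. Once this is secured, the lifting of $\varphi$, the Stein factorization, and the conic-bundle conclusion are all routine.
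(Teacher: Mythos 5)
Your proposal is correct and follows essentially the same route as the paper: the paper also uses the vanishing $\Ho^1(\hat X,\OOO_{\hat X})=0$ to make $\Ho^0(\OOO_{\hat X}(\hat H))\to\Ho^0(\OOO_{\hat H}(\hat H))$ surjective (the same map as yours, since $\OOO_X(H)$ is trivial near $C$ because $H$ is a fiber of $\pi$), lifts the Cartier divisor $\hat\CCC=\{\varphi=0\}$ to a divisor $\hat H_1$, and lets $\hat H$ and $\hat H_1$ define the contraction. Your explicit Stein factorization of $(s,t)$ and the equidimensionality check merely spell out what the paper leaves implicit.
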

\begin{proof}
Since $\Ho^1(\OOO_{\hat X})=0$, from the exact sequence
\begin{equation*}
0 \xrightarrow{\hspace*{20pt}} \OOO_X \xrightarrow{\hspace*{20pt}} \OOO_X (H) \xrightarrow{\hspace*{20pt}} \OOO_H (H)\xrightarrow{\hspace*{20pt}} 0 
\end{equation*}
we see that the map $\Ho^0(\OOO_{\hat X} (\hat H))\to \Ho^0(\OOO_{\hat H} (\hat H))$
is surjective.
Hence there exists a divisor $\hat H_1\in |\OOO_{\hat X}|_{\hat C}$ such that $\hat H_1|_{\hat H}=\hat \CCC$. Then the divisors 
$\hat H$ and $\hat H_1$ define a contraction $\hat f: \hat X\to \hat Z$.
\end{proof}

\begin{sclaim}\label{claim-contraction-exists}
There exists a contraction $f:X\to Z$ that approximates $\hat f: \hat X\to \hat Z$.
\end{sclaim}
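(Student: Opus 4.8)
The plan is to realize the formal contraction $\hat f$ analytically by making convergent the one genuinely formal ingredient in its construction. Recall from the preceding Claim that $\hat f\colon\hat X\to\hat Z$ is given by the pencil $\langle\hat H,\hat H_1\rangle$, that is, by the pair of functions cutting out $\hat H$ and $\hat H_1$. Here $\hat H=\pi^{-1}(0)$ is the fibre of the \emph{honest} fibration $\pi\colon X\to D$ of Example \ref{example-conic-bundle-lP=4+III} (given by the regular function $\phi_4/w=\phi_6/x_1^6$), while $\hat H_1=\{\hat g=0\}$ is defined only by a formal function $\hat g\in\Ho^0(I_{\hat C})$ whose restriction to $\hat H$ cuts out $\hat\CCC$, a fibre of the rational curve fibration $f_H\colon H\to B$ of Claim \ref{claim-conic-bundle-surface-H}. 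Thus $\hat f=(\pi,\hat g)\colon\hat X\to\hat Z\subset(\CC^2,o)$, and only $\hat g$ has to be replaced by a convergent function.

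First I would phrase the construction of $\hat g$ as a lifting problem over the honest data on $X$ and $H$: find $g\in\Ho^0(I_C)$ whose restriction $g|_H$ is a defining function of the Cartier divisor $\CCC$ on $H$ (so that $(\pi,g)|_H=f_H$). This problem admits the formal solution $\hat g$ produced in the preceding Claim. By Artin's approximation theorem a formal solution of such a system of convergent analytic equations is approximated, to any prescribed order along $C$, by a convergent one; this yields an honest analytic function $g$ agreeing with $\hat g$ modulo $I_C^N$ for $N$ as large as we wish. Setting $f:=(\pi,g)\colon X\to Z$ then gives an analytic morphism onto an analytic germ $(Z,o)$ that coincides with $\hat f$ along $\hat C$ up to order $N$; in particular $Z$ is smooth and $f|_H=f_H$, since these hold to that order.

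Finally I would verify that this $f$ is a genuine contraction of the asserted type, which is where the real work lies. One must check that $f$ is proper over $(Z,o)$ with connected fibres and $f^{-1}(o)_{\red}=C$, and that $-K_X$ is $f$-ample; all of these are open conditions already pinned down to order $N$ by $\hat f$. Concretely, agreement with $\hat f$ to high order forces the scheme-theoretic central fibre to equal $C$ to that order, so semicontinuity of fibre dimension makes $f$ equidimensional near $C$ and gives $f^{-1}(o)_{\red}=C$ (an irreducible curve, hence connected); the nearby fibres are connected because they are so for $\hat f$ (here the vanishing $\Ho^1(\OOO_{\hat X})=0$ established above is what allowed $\hat f$ to be built); and $-K_X\cdot C=\tfrac14>0$ is preserved. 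The main obstacle is precisely this last step: guaranteeing that a finitely truncated approximation contracts exactly $C$ and nothing more while keeping $Z$ smooth and $-K_X$ ample on the fibres, so that $f$ is the $\QQ$-conic bundle germ of \ref{main-theorem-conic-bundle}. This succeeds because all of these properties are open in the analytic topology and have already been achieved formally.
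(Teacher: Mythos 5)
The paper's own proof is a one-line deformation-theoretic argument: the scheme fibre $F$ of $f_H\colon H\to B$ has unobstructed deformations in $X$ (this is what the preceding $\Ho^1$-vanishing computations deliver), so the relevant component of the Douady space of $X$ is smooth and two-dimensional at $[F]$, and the universal family over it \emph{is} the contraction $f\colon X\to Z$. Your route is genuinely different, and its central step has a gap. You invoke Artin's approximation theorem to replace the formal function $\hat g\in\Ho^0(I_{\hat C})$ by a convergent $g$ with $g\equiv\hat g\bmod I_C^N$. But Artin approximation concerns formal power-series solutions \emph{at a point} of a system of convergent equations in finitely many variables; here the unknown is a section of $\OOO_X$ on a neighbourhood of the \emph{compact} curve $C$, and $\hat g$ lives on the formal completion $\hat X$ along $C$. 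There is no a priori comparison between $\Ho^0(\hat X,\OOO_{\hat X})$ and the convergent functions on shrinking neighbourhoods of $C$: density of the latter in the former (in the $I_C$-adic topology) is exactly what the Formal Function Theorem yields \emph{after} a contraction is known to exist, and establishing it directly is tantamount to producing the contraction. So this step assumes essentially the content of the claim. The correct tool in your circle of ideas is Artin's existence theorem for formal modifications (Algebraization of formal moduli II) or its analytic analogues --- a much deeper statement than power-series approximation, whose hypotheses (that $\hat f$ is a formal modification) would still need to be verified.

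A secondary issue: even granting a convergent $g$ agreeing with $\hat g$ to high order, the assertion that $f=(\pi,g)$ contracts exactly $C$ is not a formal ``openness'' statement. One must rule out extra components of $\{g|_H=0\}$ passing through points of $C$, and properness of $f$ onto a two-dimensional germ must be established rather than read off from a finite truncation. The Douady-space argument sidesteps all of this: smoothness and two-dimensionality of the component at $[F]$ automatically produce a flat family of compact curves sweeping out a neighbourhood of $C$, whence the contraction and the base $Z$ come for free.
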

\begin{proof}
Let $F$ be the scheme fiber of $f_H: H\to B$ over the origin.
The above arguments shows that the deformations of $F$ are  unobstructed.
Therefore the corresponding component of the Douady space is smooth and two-dimensional.
This allow us to produce a contraction $f: X\to Z$.
\end{proof}
\end{example}

\begin{example}
\label{example-conic-bundle-lP=8}
Similar to Example \ref{example-conic-bundle-lP=4+III}, let
$X\subset \PP(1,1,3,2,4)$ be a small analytic neighborhood of $C=
\{\text{$(x_1,w)$-line}\}$ given by the equation
$x_1^6\phi_4-w \phi_6 =0$, where 
\begin{eqnarray*}
\phi_6&:=&x_3^2+x_2^2w+\delta x_4^3+cx_1^2x_4^2,
\\
\phi_4&:=&x_4^2+\nu x_2x_3+\eta x_1^2x_4.
\end{eqnarray*}
It is easy to check that $P:=(0:0:0:0:1)$ is the only 
singular point of $X$ on $C$ and it is a type \type{(IIA)} point with $\ell(P)=8$.
The rational function $\phi_4/w=\phi_6/x_1^6$ near $C$ defines a fibration
whose central fiber $H$ is given by $\phi_4=\phi_6 =0$.
Existence of a contraction 
$f: X\to Z$ can be shown similar to Claim \ref{claim-contraction-exists}.
Near $P$ the surface $H$ has the following form which can be reduced to \ref{computation-lP=4+III}:
\begin{equation*}
-c\eta y_1^4y_4+ y_3^2+y_2^2+\delta y_4^3-c\nu y_1^2 y_2y_3=\phi_4=0.
\end{equation*}
\end{example}

\begin{subexample-remark} 
\label{example-conic-bundle-normal-H}
In a similar way we can construct an example of a $\QQ$-conic bundle with $\ell(P)=5$
and normal $H$
\cite[(1.1.4)]{Mori-Prokhorov-IIA-1}.
Consider $X\subset \PP(1,1,3,2,4)$ given by $w\phi_6-x_1^6\phi_4=0$, where
\begin{eqnarray*}
\phi_6&:=&x_1^5 x_2+x_2^2w+x_3^2+\delta x_4^3+cx_1 ^2 x_4^2
\end{eqnarray*}
and $\phi_4$ is as in \ref{example-conic-bundle-lP=4+III}.
In the affine chart $U_w\simeq \CC^4/\muu_{4}(1,1,3,2)$
the origin $P\in (X,C)$ is a type \type{(IIA)} point with and $\ell(P)=5$. 
It is easy to see that $X$ is smooth outside $P$.
The rational function $\phi_4/w=\phi_6/x_1^6$ defines 
a fibration on $X$ near $C$ with central fiber $H=\{\phi_4=\phi_6=0\}$.
\end{subexample-remark}

\section{Appendix}
In this section we collect computations of resolutions of 
(non-normal) surface singularities appearing as general members 
$H\in |\OOO_X|$. The techniques is very similar to that used in 
\cite[\S 9]{Mori-Prokhorov-IIA-1}
\begin{assumption}
\label{notation-blowup-1}
Let $W:= \CC^4_{y_1,\dots,y_4}/\muu_4(1,1,3,2)$ and let $\sigma$ be the weight $\frac14(1,1,3,2)$.
Let $P\in X$ be a three-dimensional terminal singularity of type \type{cAx/4}
given in $W$ there by the equation $\alpha=0$ with
\begin{equation}\label{equation-alpha-computations}
\alpha=y_1^ly_j+y_2^2+y_3^2+\delta y_4^{2k+1}+c y_1^2y_4^2+\epsilon y_1y_3y_4
+y_2\alpha'+\alpha'',
\end{equation}
where $j=3$ or $4$,\ $l\in \ZZ_{>0}$,\ 
$c, \epsilon\in \CC$, \ $\delta\in \CC^*$, \ 
$\alpha'\in (y_2,\, y_3,\, y_4)$,\
$\alpha''\in (y_2,\, y_3,\, y_4)^2$,\
$\sigmaord (\alpha')= 5/4$,\ 
$\sigmaord (\alpha'')> 3/2$,\ 
$k\ge 1$, and $2k+1$ is the smallest exponent of $y_4$
appearing in $\alpha$.
We usually assume that all the summands in \eqref{equation-alpha-computations}
have no common terms.
\end{assumption}

\begin{sconstruction}\label{construction-w-blowup-X}
Consider the weighted $\sigma$-blowup $\Phi: \tilde W\to W$. Let
$\tilde X$ be the proper transform of $X$ on $\tilde W$ and
$\Pi\subset \tilde W$ be the $\Phi$-exceptional divisor. Then $\Pi\simeq \PP(1,1,3,2)$ and
$\OOO_{\Pi}(\Pi)\simeq \OOO_{\PP}(-4)$.
Put
\begin{equation}\label{equation-Lambda-computation-2}
\begin{aligned}
O&:=(1:0:0:0),\quad Q:=(0:0:1:0)\in \Pi,
\\
\Lambda&:=\{y_2=\alpha_{\sigma=6/4}=0\}\subset \Pi.
\end{aligned}
\end{equation}
Let $\tilde X\subset \tilde W$
be the proper transform of $X$.
\end{sconstruction}

\begin{sclaim}\label{claim-4-construction-blowup}
$\Sing(\tilde X)$ consists of the curve $\Lambda$, the point $Q$, and the point
$Q_1:=(0:0:0:1)$ \textup($Q_1\notin \Lambda$ only if $k=1$\textup).
\end{sclaim}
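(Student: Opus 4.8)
The plan is to compute $\Sing(\tilde X)$ directly in the affine charts of the weighted blow-up $\Phi\colon\tilde W\to W$ of Construction \ref{construction-w-blowup-X}, reducing everything to the Jacobian criterion on the smooth cyclic covers of those charts. First I would note that $\Phi$ is an isomorphism away from $\Pi$, so $\tilde X\setminus\Pi\cong X\setminus\{P\}$ is smooth and $\Sing(\tilde X)\subset\Pi$; it therefore suffices to work along the exceptional divisor. Since $\Pi\cong\PP(1,1,3,2)$, the toric variety $\tilde W$ is covered by four charts, centred (in the notation of \eqref{equation-Lambda-computation-2}) at the smooth point $O=(1:0:0:0)$, at $(0:1:0:0)$, and at the two singular points $Q=(0:0:1:0)$ and $Q_1=(0:0:0:1)$ of $\Pi$; at $Q$ and $Q_1$ the ambient $\tilde W$ carries cyclic quotient singularities of orders $3$ and $2$.

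The heart of the argument is one local computation common to all charts. On the (smooth) cyclic cover of each chart let $\rho_0$ be the weight-one exceptional coordinate, so that $\Pi=\{\rho_0=0\}$ and $y_i=\rho_0^{\sigma(y_i)}\rho_i$ with $\sigma(y_1)=\sigma(y_2)=\tfrac14$, $\sigma(y_3)=\tfrac34$, $\sigma(y_4)=\tfrac12$ (in the $O$-chart $\rho_0=y_1^4$). Because $\sigmaord(\alpha)=\tfrac12$, the semi-invariant $\alpha$ of \eqref{equation-alpha-computations} factors as $\alpha=\rho_0^{1/2}\tilde\alpha$, where $\tilde\alpha$ defines the proper transform $\tilde X$. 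As $y_2^2$ is the unique term of $\sigma$-order $\tfrac12$, one gets $\tilde\alpha\equiv\rho_2^2\pmod{\rho_0}$, so $\tilde X\cap\Pi=\{y_2=0\}$ (with multiplicity two); and the coefficient of $\rho_0$ in $\tilde\alpha$, coming from $y_3^2$, $c\,y_1^2y_4^2$, $\epsilon\,y_1y_3y_4$ and (when their $\sigma$-weight is $\tfrac64$) from $y_1^ly_j$ and $\delta y_4^{2k+1}$, is exactly the dehomogenisation of $\alpha_{\sigma=6/4}$. Applying the Jacobian criterion on the cover, every partial vanishes along $\{\rho_2=0\}\cap\Pi$ except $\partial_{\rho_0}\tilde\alpha$, which on $\{\rho_2=0\}$ equals $\alpha_{\sigma=6/4}$. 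Hence the cover is singular precisely over $\{y_2=\alpha_{\sigma=6/4}=0\}=\Lambda$ and smooth elsewhere on $\tilde X\cap\Pi$, which gives $\Lambda\subset\Sing(\tilde X)$ with no further one-dimensional component.

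Next I would dispose of the exceptional points. At $O$ the chart is smooth, so the previous step is conclusive; in the $(0:1:0:0)$-chart the factored equation becomes $\tilde\alpha=1+(\cdots)$, a unit on $\Pi$, so $\tilde X$ misses $\Pi$ and contributes nothing. At $Q$ and $Q_1$ I would pass to the $\muu_3$- resp.\ $\muu_2$-cover and evaluate $\partial_{\rho_0}\tilde\alpha$ at the origin: since $y_3^2$ (resp.\ $\delta y_4^{2k+1}$ when $k=1$) contributes the linear term $\rho_0$, the cover is smooth there, while the cyclic group fixes the point and acts non-trivially on the tangent space, so $\tilde X$ acquires a genuine cyclic quotient singularity and $Q,Q_1\in\Sing(\tilde X)$. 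The incidence with $\Lambda$ is then read off from $\alpha_{\sigma=6/4}$: its value at $Q$ is the non-zero coefficient of $y_3^2$, so $Q\notin\Lambda$; its value at $Q_1$ is the coefficient of $y_4^3$, which equals $\delta\neq0$ exactly when $2k+1=3$. Thus $Q_1\in\Lambda$ for $k\ge2$ and $Q_1\notin\Lambda$ precisely when $k=1$, as asserted. Assembling the four charts yields $\Sing(\tilde X)=\Lambda\cup\{Q,Q_1\}$; the case $j=4$ of \eqref{equation-alpha-computations} runs verbatim, $y_3^2$ again supplying the linear term in $\rho_0$.

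The step I expect to be the main obstacle is pinning down the correct cover coordinates near $Q$ and $Q_1$. The fractional exponents $\rho_0^{3/4},\rho_0^{1/2}$ forced by the $\muu_4$-quotient are easy to get wrong, and a naive substitution $y_i=y_i'\cdot(\mathrm{exc.})^{\sigma(y_i)4}$ that treats the exceptional parameter as $y_{i_0}$ rather than $\rho_0$ omits the decisive linear term $\rho_0$ produced by $y_3^2$; this would spuriously exhibit $\tilde X$ as singular along an entire surface instead of along $\Lambda$. I would therefore derive the cover coordinates carefully from the toric description of $\Phi$ (the index-$3$ and index-$2$ sublattices cut out by the cones over $Q$ and $Q_1$), and cross-check the normalisation $\alpha=\rho_0^{1/2}\tilde\alpha$ against the smooth $O$-chart, where $\rho_0=y_1^4$ makes the identification $\partial_{\rho_0}\tilde\alpha|_{\Pi}=\alpha_{\sigma=6/4}$ entirely transparent.
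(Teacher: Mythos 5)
Your computation is correct, and since the paper states Claim \ref{claim-4-construction-blowup} with no proof at all (it is left as the direct chart-by-chart verification), your argument supplies precisely that computation; the key expansion $\tilde\alpha=\rho_2^2+\rho_0\cdot\bigl(\alpha_{\sigma=6/4}\bigr)^{\sim}+\cdots$ and the resulting identification $\Sing(\tilde X)\cap\Pi=\{y_2=\alpha_{\sigma=6/4}=0\}\cup\{Q,Q_1\}$ are consistent with the explicit $U_1$-chart formulas the paper does write out in Computations \ref{computation-lP=3+III-part2}--\ref{computation-lP=4+III} and with $E=\{y_2=0\}$, $\Lambda\sim 6G$ in the proof of Claim \ref{claim-singularities-Lambda}. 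The one point to make fully explicit is that at $Q$ and at $Q_1$ (when $k=1$) nontriviality of the residual $\muu_3$- resp.\ $\muu_2$-action on the tangent space of the smooth cover of $\tilde X$ is not by itself enough --- one must note the action is not generated by pseudo-reflections --- but this is immediate from the toric weights and is corroborated by Claim \ref{claim-2-construction-blowup}, which exhibits the hyperplane section $\tilde H$ as an $A_2$-point at $Q$.
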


\begin{sclaim}\label{claim-singularities-Lambda}
$\tilde X$ has singularity of
type \type{cA_1} at a general point of $\Lambda$.
\end{sclaim}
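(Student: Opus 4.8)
The plan is to verify the claim by an explicit computation in the chart of $\tilde W$ lying over the $y_1$-direction, which is exactly where a general point of $\Lambda$ lives. First I would check that $\Lambda$ meets $\{y_1=0\}\subset\Pi$ only at $Q_1$: by \eqref{equation-Lambda-computation-2} and \eqref{equation-alpha-computations} every monomial of $\alpha$ of $\sigmaord$ equal to $6/4$ other than $y_3^2$ carries a factor $y_1$ or $y_2$, so $\alpha_{\sigma=6/4}|_{\{y_1=y_2=0\}}=y_3^2$ and hence $\Lambda\cap\{y_1=0\}=\{(0{:}0{:}0{:}1)\}=\{Q_1\}$. Thus a general point of $\Lambda$ has $y_1\neq0$ and is seen in the $y_1$-chart. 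There $t=y_1$, $v_2=y_2/y_1$, $v_3=y_3/y_1^3$, $v_4=y_4/y_1^2$, and $\muu_4$ acts only on $t$ by $t\mapsto\zeta t$; therefore the genuine smooth coordinate on $\tilde W$ is $s:=t^4=y_1^4$, and $(s,v_2,v_3,v_4)$ make this chart a smooth $\CC^4$.

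Next I would write the equation of $\tilde X$. Since $\sigmaord(\alpha)=1/2$, realized by $y_2^2$, one has $\Phi^*\alpha=y_1^2\tilde\alpha$ with $\tilde\alpha$ a $\muu_4$-invariant function (because $\wt\alpha\equiv2\bmod4$). A monomial of $\alpha$ of $\sigmaord=m/4$ contributes a power $y_1^{m}$, so $\wt\alpha\equiv2$ forces every power of $y_1$ in $\tilde\alpha$ to be divisible by $4$, i.e.\ to be a power of $s$. Substituting \eqref{equation-alpha-computations} gives
\begin{equation*}
\tilde\alpha=v_2^2+R(v_3,v_4)\,s+v_2\,g(v_3,v_4)\,s+O(s^2),
\end{equation*}
where $R(v_3,v_4)=\alpha_{\sigma=6/4}|_{y_1=1,\,y_2=0}=v_3^2+c\,v_4^2+\epsilon\,v_3v_4+\cdots$ (the remaining low-order terms of $R$ depend on whether $j=3$ or $4$ and on $l$, but are irrelevant below). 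In particular $\Pi=\{s=0\}$ and $\tilde\alpha|_{s=0}=v_2^2$, which recovers $\Lambda\cap(y_1\text{-chart})=\{s=0,\ v_2=0,\ R=0\}$, consistent with Claim \ref{claim-4-construction-blowup}, along which we already know $\tilde X$ is singular.

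The heart of the argument is then a transversal slice. Fix a general point $p\in\Lambda$; being a smooth point of the reduced curve $\Lambda$, it satisfies $dR|_p\neq0$, say $\partial_{v_3}R|_p\neq0$ (otherwise use $v_4$). Slicing $\tilde X$ by the general hyperplane $\{v_4=v_4(p)\}$ through $p$ and using $v_2,s,\bar v_3:=v_3-v_3(p)$ as local coordinates, the expansion $R(v_3,v_4(p))=(\partial_{v_3}R|_p)\,\bar v_3+O(\bar v_3^{\,2})$ shows that the quadratic part of the restriction of $\tilde\alpha$ is
\begin{equation*}
v_2^2+(\partial_{v_3}R|_p)\,\bar v_3\,s+g(p)\,v_2\,s+c_2\,s^2,
\end{equation*}
whose symmetric matrix has determinant $-(\partial_{v_3}R|_p)^2/4\neq0$. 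Hence the slice is a nondegenerate quadric singularity, i.e.\ a node of type \type{A_1}; equivalently the quadratic part of $\tilde\alpha$ at $p$ has rank $3$, so $\tilde X$ has a \type{cA_1} singularity at the general point $p$ of $\Lambda$.

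The main point to get right is the bookkeeping forced by the $\muu_4$-quotient: the correct smooth coordinate is $s=y_1^4$ rather than $y_1$, and it is only after this reduction that $\tilde X$ appears singular along the curve $\Lambda$ (and not along the surface $\{v_2=0\}$, which is what a naive computation upstairs would suggest). With that in hand, the only substantive input is the identification of the $v_2$-free part of the $s^1$-coefficient of $\tilde\alpha$ with the very equation $R$ cutting out $\Lambda$; this is what produces the cross term $\bar v_3\,s$ in the slice and hence the rank-$3$ quadric. Smoothness of $\Lambda$ at its general point, i.e.\ $dR|_p\neq0$, is precisely what pins the transversal type down to \type{A_1}.
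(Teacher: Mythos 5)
Your proof is correct in substance but takes a genuinely different route from the paper's. The paper avoids any chart computation: it writes down a log-crepant structure $\Phi^*\bigl(K_X+D+\frac 12 F\bigr)=K_{\tilde X}+\tilde D+\frac 12 \tilde F+E\qq 0$, restricts to $E\simeq \PP(1,3,2)$ by adjunction, pins down the different $\Diff_E(0)=\frac 12 \Lambda$ purely by a degree count in $\Cl(E)\simeq\ZZ$, and then uses inversion of adjunction \cite[Th. 17.6]{Utah} together with \cite[Th. 16.6]{Utah} to convert the coefficient $\frac 12$ into a transversal \type{A_1} singularity. You instead compute the proper transform explicitly in the $y_1$-chart; the two key points --- that the genuine smooth coordinate is $s=y_1^4$ because $\wt\alpha\equiv 2\bmod 4$ forces every $y_1$-power in $\tilde\alpha$ to be a multiple of $4$, and that the $v_2$-free part of the $s$-coefficient of $\tilde\alpha$ is exactly the equation $R$ cutting out $\Lambda$, so that the quadratic part $v_2^2+(dR|_p)(\cdot)\,s+\cdots$ has rank $3$ --- are both handled correctly. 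The paper's method buys uniformity (no case division on $j$, $l$, $k$ and no quotient bookkeeping); yours buys an elementary, self-contained argument that makes the transversal slice visible.

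Two caveats. First, your reason for placing a general point of $\Lambda$ in the $y_1$-chart has a slip: when $k=1$ the monomial $\delta y_4^3$ also has $\sigmaord=6/4$ and carries no factor $y_1$ or $y_2$, so $\alpha_{\sigma=6/4}|_{\{y_1=y_2=0\}}=y_3^2+\delta y_4^3$, not $y_3^2$; the conclusion is unaffected since $\Lambda\cap\{y_1=0\}$ is still finite. Second, the inference ``$p$ is a smooth point of the reduced curve $\Lambda$, hence $dR|_p\neq 0$'' requires $R$ to be squarefree, not merely $\Lambda_{\red}$ smooth: for instance $R=v_3^2$ (possible when $l>4$, $k\ge 2$, $\epsilon^2=4c$) has $dR\equiv 0$ along $\Lambda_{\red}$ and the transversal type genuinely degenerates there. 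One checks that $\delta\neq 0$ excludes this whenever $k=1$, and the linear term excludes it when $l\in\{3,4\}$; in any case the paper's own proof carries the same implicit hypothesis, since reading off $\Diff_E(0)=\frac 12\Lambda$ with all coefficients equal to $\frac 12$ presupposes that $\Lambda$ is generically reduced. So this is a shared edge case rather than a defect specific to your argument.
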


\begin{proof}
Let $D\in |-K_X|$ be a general member
and let $F$ be a general hyperplane section of $X$ passing through $0$. 
We may assume that $D$ is given by
$y_1+y_2+\cdots $ (see \ref{ge}) and 
$F$ is given by $y_1y_3+\cdots=0$.
It is easy to compute 
\begin{equation*}
\Phi^*\left( K_X+D+\textstyle\frac 12 F\right)= 
K_{\tilde X}+\tilde D+\textstyle\frac 12 \tilde F+E\qq 0,
\end{equation*}
where $E=\left(\Pi|_{\tilde X}\right)_{\red}=\{y_2=0\}\subset \Pi$, so $E\simeq \PP(1,3,2)$
with natural coordinates $y_1$, $y_3$, $y_4$. 
By the adjunction formula \cite[Th. 16.5]{Utah}
\begin{equation}\label{equation-adjunction}
\textstyle
\left.
\left(K_{\tilde X}+\tilde D+\frac 12 \tilde F+E\right)\right|_E=K_{E}+\tilde D|_E+\frac 12 \tilde F|_E+\Diff_E(0)\qq 0,
\end{equation}
where $\Diff_E(0)$ is an effective divisor supported on $\Lambda$.
Let $G:=\{y_1=0\}\subset E$. Then $G$ is a generator of $\Cl(E)\simeq \ZZ$.
It is easy to see that $\tilde D|_E\sim G$, $\tilde F|_E\sim 4G$, and
$\Lambda\sim 6G$. 
By \eqref{equation-adjunction}
we have
$\Diff_E(0) \qq 3 G$, i.e. $\Diff_E(0)=\frac 12 \Lambda$.
By the inversion of adjunction $K_{\tilde X}+E$ is plt 
at a general point of $\Lambda$ \cite[Th. 17.6]{Utah}.
Then by \cite[Th. 16.6]{Utah} the variety $\tilde X$ has singularity of
type \type{cA_1} at a general point of $\Lambda$.
\end{proof}

\begin{assumption}\label{notation-blowup}
In the notation of \xref{notation-blowup-1}
consider a non-normal
surface singularity $H\ni 0$ given in $W$ by
two $\sigma$-semi-invariant equations $\alpha=\beta=0$.
We assume that the following conditions are satisfied
\begin{itemize}[leftmargin=20pt]
\item
$H$ is singular along $C:=\{\text{$y_1$-axis}\}/\muu_4$ and smooth outside $C$,
\item
$\alpha$ satisfies the assumptions of \xref{notation-blowup-1},
\item
$\wt \beta\equiv 0\mod 4$,
\item
$y_4^2$ appears in $\beta$ with coefficient $1$,
\item
$y_2y_3$ appears in $\beta$ with coefficient $\nu$ which can be taken general,
\item
the normalization of $H$ has only rational singularities
and, for any resolution, the total transform of $C$ has only normal crossings. 
\end{itemize}

\begin{scase}\label{computations-notation}
We can write the equations of $H$ in the following form
\begin{eqnarray*}
\alpha&=&y_1^ly_j+y_2^2+y_3^2+\delta y_4^{2k+1}+c y_1^2y_4^2+\epsilon y_1y_3y_4
+y_2\alpha'+\alpha'',
\\
\beta&= &y_4^2+\nu y_2y_3+\lambda y_1y_3+\eta y_1^2y_4+y_2\beta'+\beta'',
\end{eqnarray*}
where $\alpha$ is as in  \ref{notation-blowup-1},\quad 
$\eta, \nu, \lambda\in \CC$, \ 
$\beta',\, \beta''\in (y_2,\, y_3,\, y_4)$,\
$\sigmaord (\beta')= 3/4$,\ and 
$\sigmaord (\beta'')> 1$.
We usually assume that all the summands in $\beta$
have no common terms. Then
$\beta'\in (y_1y_4,\, y_1y_2,\, y_2^2, y_2y_4)$.
\end{scase}
\end{assumption}

\begin{sremark}\label{remark-computation-normality}
Since $H$ is singular along $C$, we have $y_1^sy_r\notin \beta$ for any $r\neq j$ and any $s$.
Hence $\lambda\eta=0$. Moreover, if $\lambda\neq 0$, then $j=3$
and if $\eta\neq 0$, then $j=4$.
We also may assume that $\beta''\in (y_2,\, y_3,\, y_4)^2$.
\end{sremark}

\begin{sconstruction}\label{notation-computations--sing}
As in \xref{construction-w-blowup-X} consider the weighted 
$\sigma$-blowup $\Phi: \tilde W\to W$. 
Let $\tilde H\subset \tilde W$
(resp. $\tilde C\subset \tilde W$) be the proper transform of $H$
(resp. $C$).
Clearly, $\tilde C\cap \Pi=\{ O\}$.
Denote (scheme-theoretically)
\begin{equation*}
\Xi:=\tilde H\cap \Pi = \{y_2^2=\beta_{\sigma=1}=0\} \subset \Pi.
\end{equation*}
The surface $\tilde H$ is smooth outside $\tilde C\cup \Supp(\Xi)$ and the set $\tilde C\cup \Supp(\Xi)$ 
is covered by two affine charts in $\tilde W$
\begin{equation*}
U_1=\{y_1\neq 0\}\simeq \CC^4,\qquad U_3=\{y_3\neq 0\}\simeq \CC^4/\muu_3(1,1,2,2).
\end{equation*}
Let $\varphi: \hat H\overset{\tau}{\longrightarrow} \tilde H^{\n}\overset{\tilde \psi}{\longrightarrow} \tilde H$ be
the composition of the normalization and the minimal resolution
and let $\hat \Xi_i\subset \hat H$ be
the proper transform of $\Xi_i$.
Let $\tilde C^{\n}=\tilde \psi^{-1}(\tilde C)_{\red}$ and
let $\hat C\subset \hat H$ be the proper transform of $\tilde C^{\n}$.
\end{sconstruction}

\begin{sclaim}[{\cite[9.1.4]{Mori-Prokhorov-IIA-1}}]\label{claim-1-construction-blowup}
Any irreducible component $\Xi_i$ of $\Xi$ is a smooth rational curve
passing through $Q$.
Moreover, $\Xi=2\Xi_1$ \textup(resp. $\Xi=2\Xi_1+2\Xi_2$,\ $\Xi=4\Xi_1$\textup)
if and only if $\lambda\neq 0$ \textup(resp. $\lambda=0$ and $\eta\neq 0$,
$(\lambda, \eta)= (0,0)$\textup).
\end{sclaim}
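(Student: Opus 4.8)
The plan is to read off the two equations of $\Xi=\tilde H\cap\Pi$ from the $\sigma$-leading parts of $\alpha$ and $\beta$, and then analyze the resulting curve inside $E=\{y_2=0\}\subset\Pi$. First I would compute $\sigma$-orders term by term in \eqref{equation-alpha-computations} and in the expression for $\beta$ in \ref{computations-notation}. Since $\sigmaord(y_2^2)=1/2$ is strictly smaller than the order of every other monomial of $\alpha$ (indeed $\sigmaord(y_1^ly_j)\ge 3/4$, while $\sigmaord(y_3^2)=\sigmaord(c y_1^2y_4^2)=\sigmaord(\epsilon y_1y_3y_4)=3/2$ and $\sigmaord(\delta y_4^{2k+1})=(2k+1)/2\ge 3/2$ for $k\ge 1$), the leading part of $\alpha$ is exactly $y_2^2$; likewise every listed monomial of $\beta$ has $\sigmaord\ge 1$, so $\beta_{\sigma=1}=y_4^2+\nu y_2y_3+\lambda y_1y_3+\eta y_1^2y_4+(y_2\beta')_{\sigma=1}$, where the last term is divisible by $y_2$ because $\beta'\in(y_1y_4,\,y_1y_2,\,y_2^2,\,y_2y_4)$. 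Thus $\Xi=\{y_2^2=\beta_{\sigma=1}=0\}$, and since $y_2^2=0$ forces $y_2=0$ on the reduced level, $\Supp\Xi\subset E\simeq\PP(1,3,2)$, where $\beta_{\sigma=1}$ restricts to $g:=y_4^2+\lambda y_1y_3+\eta y_1^2y_4$.

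Next I would invoke Remark \ref{remark-computation-normality} to get $\lambda\eta=0$, splitting the analysis into the three mutually exclusive cases of the statement, and factor $g$ on $E$ in each: (a) if $\lambda\neq 0$ then $\eta=0$ and $g=y_4^2+\lambda y_1y_3$ is irreducible, cutting out one curve $\Xi_1$; (b) if $\lambda=0,\,\eta\neq 0$ then $g=y_4(y_4+\eta y_1^2)$ splits into $\Xi_1=\{y_4=0\}$ and $\Xi_2=\{y_4+\eta y_1^2=0\}$, distinct since $\eta\neq 0$; (c) if $\lambda=\eta=0$ then $g=y_4^2$ defines the single curve $\Xi_1=\{y_4=0\}$. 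Working in the affine charts $y_1=1$ and $y_3=1$ I would check each component is smooth and rational (e.g. in case (a), $y_3=-y_4^2/\lambda$ parametrizes $\Xi_1$ by $y_4$, and near $Q$ the chart $y_3=1$ gives $y_1=-y_4^2/\lambda$), and that each passes through $Q=(0:0:1:0)$, which is immediate since $g$ vanishes at $y_1=y_2=y_4=0$ for all of them.

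The delicate point, and the main obstacle, is the multiplicities $2$, $2+2$, $4$: here the term $\nu y_2y_3$ in $\beta_{\sigma=1}$ must be handled with care. I would compute the length of $\OOO_{\Xi}$ at the generic point of each component, where $y_3$ is a unit (none of the components lies in $\{y_3=0\}$), so $\nu y_2y_3$ has unit coefficient. In cases (a) and (b) the component's own defining function enters $\beta_{\sigma=1}$ linearly (up to a unit), so $(y_2^2,\beta_{\sigma=1})$ reduces to $\kappa[[y_2]]/(y_2^2)$ of length $2$, giving $\Xi=2\Xi_1$ respectively $\Xi=2\Xi_1+2\Xi_2$; in case (c), $\beta_{\sigma=1}=y_4^2+\nu y_2y_3+\cdots$ lets one eliminate $y_2=-y_4^2/(\nu y_3)$, so that $y_2^2=0$ becomes $y_4^4=0$ and the length is $4$, giving $\Xi=4\Xi_1$. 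As a consistency check I would verify that all three decompositions yield the same total degree $\tfrac43$ of $\Xi$ in $\PP(1,1,3,2)$, as forced by $[\Xi]=2H\cdot 4H$ with $H^3=1/6$ coming from the $\sigma$-orders $1/2$ and $1$ of the two leading forms. Since the statement is \cite[9.1.4]{Mori-Prokhorov-IIA-1}, this argument simply reproduces that computation in the present normal form for $\alpha$ and $\beta$.
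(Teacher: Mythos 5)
Your computation is correct and is essentially the standard verification of this claim, which the paper itself does not prove but simply imports from \cite[9.1.4]{Mori-Prokhorov-IIA-1}: the identification of the leading forms $\alpha_{\sigma=1/2}=y_2^2$ and $\beta_{\sigma=1}=y_4^2+\lambda y_1y_3+\eta y_1^2y_4+y_2(\cdots)$, the case split via $\lambda\eta=0$ from Remark \ref{remark-computation-normality}, and the length count at the generic point of each component (length $2$ when the component's equation occurs linearly with unit coefficient, length $4$ when $y_2$ is eliminated via $\nu y_2y_3=-y_4^2+\cdots$) all match the intended argument, and your degree check $\deg\Xi=4/3$ confirms the multiplicities. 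The only point worth stating explicitly is smoothness of the components at $Q=(0{:}0{:}1{:}0)$, where $\Pi$ has a $\frac13(1,1,2)$ quotient singularity, but the parametrized curves there are invariant and descend to smooth branches, so your conclusion stands.
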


\begin{sclaim}[{\cite[9.1.5]{Mori-Prokhorov-IIA-1}}]
\label{claim-2-construction-blowup}
The point $Q\in \tilde H$ is Du Val of type \type{A_2}.
In particular, $\tilde H$ is normal outside $\tilde C$.
\end{sclaim}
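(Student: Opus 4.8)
The plan is to compute the analytic singularity of $\tilde H$ at $Q$ directly in the chart $U_3\simeq\CC^4/\muu_3(1,1,2,2)$, where $Q$ is the origin. I realize $U_3$ as $\CC^4_{\xi_1,\xi_2,\xi_4,s}/\muu_3$ with $\muu_3$ acting by the weights $2,2,1,1$ and with $\Pi=\{s=0\}$; on the $4$-fold cover $s=t^4$ the weighted $\sigma$-blowup is the monomial map $y_1=\xi_1t$, $y_2=\xi_2t$, $y_3=t^3$, $y_4=\xi_4t^2$. First I would substitute this into $\alpha$ and $\beta$ and pass to the proper transforms $\tilde\alpha,\tilde\beta$ by dividing off the lowest power of $t$ (namely $t^2$ for $\alpha$, since $\sigmaord(\alpha)=1/2$ is achieved only by $y_2^2$, and $t^4=s$ for $\beta$, since $\sigmaord(\beta)=1$).

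The heart of the matter is to read off the linear parts of $\tilde\alpha$ and $\tilde\beta$ near $Q$. Here $y_3^2=t^6$ contributes $t^4=s$ to $\tilde\alpha$, while $\nu y_2y_3=\nu\xi_2t^4$ contributes $\nu\xi_2$ to $\tilde\beta$; thus, modulo terms of higher order,
\[
\tilde\alpha\equiv\xi_2^{\,2}+s,\qquad\tilde\beta\equiv\nu\xi_2+\lambda\xi_1+\xi_4^{\,2}.
\]
I expect this bookkeeping to be the only delicate step: one must keep track of the quotient so as to see that $y_3^2$ produces the genuine linear form $s$, and not a high power of the exceptional coordinate, which is exactly what makes $\tilde X$, and hence $\tilde H$, only mildly singular at $Q$. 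Since $\nu\neq0$ by Assumption \ref{notation-blowup} and the coefficient of $s$ is a unit, the two linear forms $s$ and $\xi_2$ are independent, so the $\muu_3$-cover $\{\tilde\alpha=\tilde\beta=0\}$ of $\tilde H$ is smooth at $Q$.

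Next I would eliminate $s$ and $\xi_2$ in a $\muu_3$-equivariant way (they are the variables solved for by the two equations) and thereby identify the cover of $(\tilde H,Q)$ with the smooth surface $\CC^2_{\xi_1,\xi_4}$. The residual $\muu_3$-action has weights $2,1$ on $\xi_1,\xi_4$, so $(\tilde H,Q)\simeq\CC^2/\muu_3(1,2)$. As $\tfrac13(1,2)=\tfrac13(1,3-1)$ is the cyclic presentation of the rational double point of type \type{A_2}, this gives the first assertion; observe that the computation is uniform in $\lambda$ and $\eta$, so none of the three cases of Claim \ref{claim-1-construction-blowup} needs to be treated separately.

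Finally, for normality outside $\tilde C$ I would check that $Q$ is the only point of $\tilde H\setminus\tilde C$ at which normality could fail. Away from $\Pi$ the map $\tilde H\to H$ is an isomorphism onto $H\setminus\{0\}$, which is smooth outside $C$; along $\Pi$ the surface $\tilde H$ meets $\Pi$ in $\Xi$ and is smooth outside $\tilde C\cup\Supp(\Xi)$ by Construction \ref{notation-computations--sing}, while along $\Xi\setminus\{Q\}$ the surface is smooth as well (in $U_3$ because $\muu_3$ acts freely off its unique fixed point $Q$, and in the smooth chart $U_1\simeq\CC^4$ by the analogous, easier computation). Since $(\tilde H,Q)$ is Du Val, hence normal, it follows that $\tilde H$ is normal outside $\tilde C$.
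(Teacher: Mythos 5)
Your computation of the \type{A_2} point is correct, and it is a faithful reconstruction of the argument that the paper merely cites from \cite[9.1.5]{Mori-Prokhorov-IIA-1}: the substitution $y_1=\xi_1t$, $y_2=\xi_2t$, $y_3=t^3$, $y_4=\xi_4t^2$, $s=t^4$ is the right chart description, every monomial of $\alpha$ (resp.\ $\beta$) has $\sigma$-weight $\equiv 2$ (resp.\ $0$) $\bmod\ 4$, so $\alpha/t^2$ and $\beta/t^4$ really are power series in $\xi_1,\xi_2,\xi_4,s$, and the only monomials that can contribute linearly are $y_3^2\mapsto s$ for $\tilde\alpha$ and $y_2y_3\mapsto\nu\xi_2$, $y_1y_3\mapsto\lambda\xi_1$ for $\tilde\beta$. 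Since $\nu\neq 0$, the equivariant elimination leaves $\CC^2_{\xi_1,\xi_4}/\muu_3(2,1)$, which is \type{A_2}. (Only the phrase ``4-fold cover'' is off: the relevant smooth cover of $U_3$ is the 3-fold cover $\CC^4_{\xi_1,\xi_2,\xi_4,s}$, with $t$ an auxiliary root satisfying $t^3=y_3$, $t^4=s$.)

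The deduction of normality outside $\tilde C$, however, contains a false step: you assert that $\tilde H$ is smooth along $\Xi\setminus\{Q\}$ away from $\tilde C$. This contradicts the paper's own statements: Claim \xref{claim-3-construction-blowup-a} locates $\Sing(\tilde H)$ as $(\Supp(\Xi)\cap\Lambda)\cup\{Q\}\cup\tilde C$, and the points of $\Supp(\Xi)\cap\Lambda$ off $\tilde C$ are genuine singular points of $\tilde H$ (the point $O'$ of Claim \xref{claim-new-11-3}, the \type{A_1}-points $P_1,P_2$ in Computation \xref{computation-lP=3a-III}, the points of $\Xi_2\cap\Lambda$ in Computation \xref{computation-lP=4+III}). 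The $\muu_3$-freeness argument only controls a neighborhood of $Q$ in $U_3$, whereas $\tilde H$ is a germ along all of $\Xi$, and the extra singular points sit at finite distance from $Q$ on $\Xi\cap\Lambda$. The conclusion survives, but for a different reason: those extra singularities are isolated, and $\tilde H$ is a complete intersection, hence Cohen--Macaulay and $S_2$, so Serre's criterion gives normality there. Alternatively, and closer to the intended force of the ``in particular'': by Claim \xref{claim-1-construction-blowup} every irreducible component of $\Xi$ passes through $Q$, where $\tilde H$ is Du Val and in particular normal, so no component of $\Xi$ can lie in the (closed, purely one-dimensional) non-normal locus; combined with smoothness of $\tilde H$ outside $\tilde C\cup\Supp(\Xi)$ this puts the non-normal locus inside $\tilde C$. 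You should replace the smoothness claim by one of these two arguments.
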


\begin{sclaim}\label{claim-3-construction-blowup-a}
If at least one of the constants $\lambda$ or $\eta$ is non-zero, then 
the singular locus of $\tilde H$ coincides with 
$\bigl(\Supp(\Xi)\cap \Lambda\bigr)\cup \{Q\}\cup \tilde C$.
\end{sclaim}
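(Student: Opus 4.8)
The plan is to prove the two inclusions separately, splitting the analysis according to whether a point lies on the exceptional divisor $\Pi$ or not. For the inclusion $(\Supp(\Xi)\cap\Lambda)\cup\{Q\}\cup\tilde C\subseteq\Sing(\tilde H)$ I would argue as follows. Since $H$ is singular along $C$ and $\tilde C$ is the proper transform of $C$, the surface $\tilde H$ is singular along all of $\tilde C$; the point $Q$ is singular because it is a Du Val point of type \type{A_2} by Claim \ref{claim-2-construction-blowup}. For the remaining points I would use that $\tilde H$ is the Cartier divisor $\tilde X\cap\{\tilde\beta=0\}$ cut out on $\tilde X$ by the proper transform $\tilde\beta$ of $\beta$: at a point of $\tilde H$ lying on $\Sing(\tilde X)$ the differential of the equation $\tilde\alpha$ of $\tilde X$ vanishes, so the Jacobian of the pair $(\tilde\alpha,\tilde\beta)$ has rank $\le 1$ and $\tilde H$ is singular there. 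By Claim \ref{claim-singularities-Lambda} one has $\Lambda\subseteq\Sing(\tilde X)$, and $\tilde H\cap\Lambda=\Xi\cap\Lambda=\Supp(\Xi)\cap\Lambda$, which gives the claimed inclusion.

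For the reverse inclusion, off $\Pi$ the map $\Phi$ is an isomorphism, so $\tilde H\setminus\Pi\cong H\setminus\{0\}$ is singular exactly along $\tilde C\setminus\{O\}$, and it remains to bound $\Sing(\tilde H)\cap\Pi\subseteq\Xi$. Here the key observation I would exploit is that, restricting $\beta_{\sigma=1}=y_4^2+\nu y_2y_3+\lambda y_1y_3+\eta y_1^2y_4$ to $E=\{y_2=0\}\cong\PP(1,3,2)$, the locus $\{y_1=0\}$ meets $\Supp(\Xi)$ only at $Q$ (since there $y_4^2=0$ forces $y_4=0$). Hence $\Supp(\Xi)\setminus\{Q\}\subseteq U_1=\{y_1\ne 0\}$, and since $Q$ is already accounted for, and $Q_1\notin\tilde H$, it suffices to work in the single chart $U_1$.

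In $U_1$, with coordinates $y_1=x_1,\ y_2=x_1x_2,\ y_3=x_1^3x_3,\ y_4=x_1^2x_4$ and $\Pi=\{x_1=0\}$, I would write out $\tilde\alpha=\alpha/x_1^2$ and $\tilde\beta=\beta/x_1^4$ and compute the Jacobian at a point $p=(0,0,x_3^0,x_4^0)$ of $\Xi$. A direct inspection shows that at such $p$ all partials of $\tilde\alpha$ except $\partial_{x_1}\tilde\alpha$ vanish, so by Claim \ref{claim-4-construction-blowup} (which gives $\Sing(\tilde X)\cap U_1=\Lambda$, as $Q,Q_1\notin U_1$) the threefold $\tilde X$ is smooth at $p$ precisely when $p\notin\Lambda$, in which case $\nabla\tilde\alpha(p)$ points in the $x_1$-direction. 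At the same time the higher-order summands $y_2\beta'$ and $\beta''$ contribute nothing to $\partial_{x_3}\tilde\beta$ and $\partial_{x_4}\tilde\beta$ at $x_1=x_2=0$, leaving $\partial_{x_3}\tilde\beta(p)=\lambda$ and $\partial_{x_4}\tilde\beta(p)=2x_4^0+\eta$. Under the hypothesis $\lambda\ne 0$ or $\eta\ne 0$ one of these is non-zero along each component of $\Xi$ (cf.\ Claim \ref{claim-1-construction-blowup}), so $\nabla\tilde\beta(p)$ is transverse to the $x_1$-direction, the Jacobian of $(\tilde\alpha,\tilde\beta)$ has rank $2$, and $\tilde H$ is smooth at every $p\in\Xi\setminus\Lambda$. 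Together with Claim \ref{claim-2-construction-blowup} at $Q$ this yields $\Sing(\tilde H)\cap\Pi=(\Supp(\Xi)\cap\Lambda)\cup\{Q\}$, completing the proof.

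The step I expect to be the main obstacle is the explicit verification in $U_1$ that the terms $y_2\beta'$ and $\beta''$ do not disturb $\partial_{x_3}\tilde\beta$ and $\partial_{x_4}\tilde\beta$ along $\Xi$; this is exactly where the hypothesis $\lambda\ne 0$ or $\eta\ne 0$ enters, and it is what fails in the excluded case $(\lambda,\eta)=(0,0)$, where $\beta_{\sigma=1}|_E=y_4^2$ and $\tilde H$ becomes singular along the whole curve $\Supp(\Xi)=\Xi_1$.
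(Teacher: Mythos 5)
Your argument is correct and is precisely the ``direct computation'' that the paper omits: both inclusions follow from the Jacobian criterion for the complete intersection $\tilde H=\tilde X\cap \tilde V$, combined with Claims \xref{claim-4-construction-blowup}, \xref{claim-1-construction-blowup} and \xref{claim-2-construction-blowup}, and your identification of $\partial\tilde\beta/\partial x_3=\lambda$ and $\partial\tilde\beta/\partial x_4=2x_4^0+\eta$ as the quantities that must not vanish simultaneously is exactly the point of the hypothesis. The only slip is notational: in the chart $U_1\simeq\CC^4$ the first coordinate is $y_1^4$ rather than $y_1$ (as in the paper's own chart computations, where $y_1^ly_3$ becomes $y_1^{(l+1)/4}y_3$ after dividing by the multiplicity), and with that normalization one gets $\partial_{x_1}\tilde\alpha(p)=\alpha_{\sigma=6/4}(p)\neq 0$ for $p\notin\Lambda$, so your rank-two argument goes through verbatim.
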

\begin{proof}
Direct computations. 
\end{proof}

\begin{sremark}
Let $\psi: H^{\n}\to H$ be the normalization
and let $C^{\n}:=\psi^{-1}(C)_{\red}$.
Since
$H$ has double singularities at a general point of $C$,
the map $\psi_C: C^{\n}\to C$ is either birational or a double cover.
In particular, $C^{\n}$ has at most two components.
\end{sremark}

\begin{sdefinition}\label{definition-pinch-point}
A surface singularity $0\in S$ is called a \emph{pinch point} if it is analytically isomorphic to
\begin{equation*}
0\in \{z_2^2+z_1z_3^2=0\}\subset \CC^3. 
\end{equation*}
\end{sdefinition}

\begin{sremark}
The singular locus of a surface $S$ near a pinch point $0$ is a smooth curve $C$,
the normalization $\psi: S^{\n}\to S$ of $S$ is smooth, and $\psi_C: \psi^{-1}(C)\to C$
is a double cover ramified over $0$.
\end{sremark}

\begin{sclaim}\label{claim-construction-blowup-Du-Val}
The singularities of
$\tilde H^{\n}$ are Du Val outside the preimage of $\tilde C$.
If moreover $\beta$ contains either $y_1y_3$ or $y_1^2y_4$,
then the singularities of $\tilde H^{\n}$ are Du Val everywhere.
\end{sclaim}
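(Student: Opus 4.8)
The plan is to reduce the statement to a property of the surface $\tilde H$, dispose of the normal locus by a rationality-plus-Gorenstein argument, and leave only the non-normal locus $\tilde C$ to an explicit chart computation in the spirit of \cite[\S 9]{Mori-Prokhorov-IIA-1}. First I would record that $\tilde H^{\n}$ has only rational singularities: the morphism $\Phi$ restricts to a proper birational morphism $\tilde H\to H$, hence induces a proper birational morphism $\tilde H^{\n}\to H^{\n}$ of normal surfaces, and $H^{\n}$ has rational singularities by the hypotheses of \ref{notation-blowup}; applying the Leray spectral sequence to a resolution factoring through $\tilde H^{\n}$ shows that a normal surface proper and birational over a rational singularity is again rational.

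For the first assertion, Claim \ref{claim-2-construction-blowup} gives that $\tilde H$ is normal outside $\tilde C$, so $\tilde\psi$ is an isomorphism over $\tilde H\setminus\tilde C$ and the singular points of $\tilde H^{\n}$ lying off the preimage of $\tilde C$ form a finite set. Since $H$ is smooth off $C$ and $\Phi$ is an isomorphism off $\Pi$, each such point lies on $\Xi=\tilde H\cap\Pi$, which is covered by the charts $U_1$, $U_3$ of Construction \ref{notation-computations--sing}. At $Q$ we are done by Claim \ref{claim-2-construction-blowup}. Every other point of $\Xi$ lies in the smooth locus of these charts ($U_1\simeq\CC^4$, and the $\muu_3$-action on $U_3$ is free away from $Q$), where $\tilde H$ is cut out by the strict transforms of $\alpha$ and $\beta$ and is therefore a local complete intersection of codimension two, hence Gorenstein; a Gorenstein rational surface singularity is Du Val \cite{Reid-YPG1987}. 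This settles the first assertion.

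For the \emph{moreover} part the hypothesis is precisely $\lambda\neq 0$ or $\eta\neq 0$, so Claim \ref{claim-3-construction-blowup-a} applies and $\Sing(\tilde H)=(\Supp(\Xi)\cap\Lambda)\cup\{Q\}\cup\tilde C$; the points off $\tilde C$ are covered above, and since $\tilde H^{\n}$ is normal its singular locus over $\tilde C$ is again finite. Over a general point of $\tilde C$ the surface $\tilde H$ has an ordinary double curve (with at worst pinch points), which normalizes to a smooth surface, so it remains to treat the finitely many points of $\tilde H^{\n}$ over $O=\tilde C\cap\Pi$. Here the plan is to write down, in the chart $U_1$, the local equations of $\tilde H$ along $\tilde C$, pass to the normalization, and compute the minimal resolution. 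By Claim \ref{claim-1-construction-blowup} the hypothesis $\lambda\neq0$ or $\eta\neq0$ forces $\Xi$ to occur with multiplicity $2$ (namely $\Xi=2\Xi_1$ or $\Xi=2\Xi_1+2\Xi_2$) rather than the multiplicity $4$ of the excluded case $\lambda=\eta=0$; this controls the order of tangency of the two sheets of $\tilde H$ meeting along $\tilde C$ and should force the singularity of $\tilde H^{\n}$ over $O$ to be of type \type{A_n} or \type{D_n}.

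The hard part will be this last computation. Although $\tilde H$ is Gorenstein along $\tilde C$, being a complete intersection there, normalization need not preserve the Gorenstein property, so the clean argument of the first assertion is unavailable over $\tilde C$ and one must genuinely carry out the normalization and resolution of the surface germ at $O$. The rationality established at the outset is what rules out any non-Du-Val (for instance minimally elliptic) normal singularity, so the computation need only confirm the \type{ADE} type and verify that the multiplicity-$2$ hypothesis produces a Du Val configuration while $\lambda=\eta=0$ does not.
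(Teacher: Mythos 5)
Your first assertion is handled correctly and in essentially the same way as the paper: outside the preimage of $\tilde C$ the surface $\tilde H$ is already normal (Claim \ref{claim-2-construction-blowup}), it is a codimension-two complete intersection in a smooth ambient chart away from $Q$, hence Gorenstein, and a rational Gorenstein surface singularity is Du Val; the point $Q$ is covered by Claim \ref{claim-2-construction-blowup}. The rationality of $\tilde H^{\n}$ via properness and birationality over $H^{\n}$ is also the right (implicit) input.

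The \emph{moreover} part, however, has a genuine gap: you correctly isolate the difficulty at the points of $\tilde H^{\n}$ over $O$, declare the Gorenstein argument ``unavailable over $\tilde C$'' because normalization need not preserve the Gorenstein property, and then defer to an explicit normalization-and-resolution computation that you never carry out. Your fallback --- that rationality alone rules out non-Du Val singularities --- is false (rational surface singularities need not be Gorenstein, e.g.\ general cyclic quotients), and your own closing sentence concedes that an \type{ADE}-type verification is still required; the remark that multiplicity $2$ of $\Xi$ ``controls the order of tangency'' and ``should force type \type{A_n} or \type{D_n}'' is a heuristic, not a proof. The missing idea is how the hypothesis that $\beta$ contains $y_1y_3$ or $y_1^2y_4$ is actually used: in the chart $U_1$ the strict transform of $\beta$ then contains a term linear in $y_3$ (resp.\ $y_4$), so that variable can be eliminated by the implicit function theorem and $\tilde H$ near $O$ becomes a \emph{hypersurface} germ of the form $x_2^2=\phi(x_1,x_3)$ with $\tilde C$ the $x_1$-axis. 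For such a double cover the normalization is obtained by successive blowups along $\tilde C$ (equivalently, by dividing $\phi$ by its largest square factor), and at every stage the surface remains a hypersurface, hence Gorenstein. So the Gorenstein-plus-rational argument you set aside is in fact exactly what closes the proof over $\tilde C$, once this double-cover structure is observed; no explicit resolution is needed.
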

\begin{proof}
By Claim \ref{claim-2-construction-blowup}
\ $\tilde H$ has a Du Val singularity at $Q$.
Note that near $O$ the surface $\tilde H$ is a hypersurface
singularity of the form $x_2^2=\phi(x_1,x_3)$, where $\tilde C$ is the $x_1$-axis.
The normalization $\tilde \psi: \tilde H^{\n}\to \tilde H$ can be obtained as a sequence of
successive blowups over $\tilde C$.
In particular, $\tilde H^{\n}$ has only hypersurface singularities.
Finally we note that a two-dimensional
rational Gorenstein singularity must be Du Val.
\end{proof}

\begin{sclaim}{\cite[9.1.9]{Mori-Prokhorov-IIA-1}}
\label{claim-5-equation-notation-blowup}
$K_{\tilde H}=\Phi^*K_H-\frac 34 \Xi$.
\end{sclaim}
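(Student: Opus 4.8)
The plan is to obtain the formula by adjunction, viewing both $H$ and its proper transform $\tilde H$ as complete intersections of the two hypersurfaces $S_\alpha:=\{\alpha=0\}$ and $S_\beta:=\{\beta=0\}$, downstairs in $W$ and upstairs in $\tilde W$. Downstairs, complete-intersection adjunction gives $K_H=(K_W+S_\alpha+S_\beta)|_H$, and upstairs $K_{\tilde H}=(K_{\tilde W}+\tilde S_\alpha+\tilde S_\beta)|_{\tilde H}$, where $\tilde S_\alpha,\tilde S_\beta$ are the proper transforms. First I would pull everything back through $\Phi$ and compare the coefficient of the exceptional divisor $\Pi$; the statement then reduces to a numerical identity together with the identification $\Pi|_{\tilde H}=\Xi$.

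Second, I would assemble the three numerical inputs. The discrepancy of the weighted $\sigma$-blowup of the \type{cAx/4} quotient is the standard cyclic-quotient weighted-blowup discrepancy $\tfrac14(1+1+3+2)-1=\tfrac34$, so $K_{\tilde W}=\Phi^*K_W+\tfrac34\Pi$. For the proper transforms, the multiplicity of $\Pi$ in $\Phi^*S_\alpha$ (resp. $\Phi^*S_\beta$) equals $\sigmaord(\alpha)$ (resp. $\sigmaord(\beta)$); from \eqref{equation-alpha-computations} and \ref{computations-notation} the leading terms are $y_2^2$ and $y_4^2+\nu y_2y_3+\lambda y_1y_3+\eta y_1^2y_4$, whence $\sigmaord(\alpha)=\tfrac12$ and $\sigmaord(\beta)=1$, so that $\tilde S_\alpha=\Phi^*S_\alpha-\tfrac12\Pi$ and $\tilde S_\beta=\Phi^*S_\beta-\Pi$. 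Adding, the coefficient of $\Pi$ in $K_{\tilde W}+\tilde S_\alpha+\tilde S_\beta-\Phi^*(K_W+S_\alpha+S_\beta)$ is $\tfrac34-\tfrac12-1=-\tfrac34$. Restricting to $\tilde H$, using $\Phi|_{\tilde H}\colon\tilde H\to H$ and $\Pi|_{\tilde H}=\tilde H\cap\Pi=\Xi$, yields $K_{\tilde H}=\Phi^*K_H-\tfrac34\Xi$.

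The hard part will be the bookkeeping that legitimizes the two-step adjunction, not the final arithmetic. Since $\alpha$ has weight $\equiv 2\bmod 4$, the hypersurface $S_\alpha$ is only $\QQ$-Cartier (whereas $S_\beta$, of weight $\equiv 0$, is Cartier), and $\Pi$ is itself merely $\QQ$-Cartier on $\tilde W$ because $\OOO_\Pi(\Pi)\simeq\OOO_\PP(-4)$ is not a line bundle on $\PP(1,1,3,2)$; so each restriction above must be read as an identity of $\QQ$-divisor classes, justified by passing to an index-one multiple (e.g. replacing $\alpha$ by $\alpha^2$ and $\Pi$ by a Cartier multiple). I would also check that adjunction applies to the possibly non-reduced, non-normal intersection $\tilde H$: this is complete-intersection adjunction for the dualizing sheaf, valid since $W$ and $\tilde W$ are Cohen--Macaulay and $\alpha,\beta$ form a regular sequence, so no different term appears. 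Finally I would confirm that the scheme-theoretic restriction $\Pi|_{\tilde H}$ carries exactly the non-reduced structure recorded in Claim \ref{claim-1-construction-blowup} (that is $\Xi=2\Xi_1$, $2\Xi_1+2\Xi_2$, or $4\Xi_1$), so that the multiplicities match and the equality holds on the nose.
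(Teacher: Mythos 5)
Your computation is correct and is essentially the argument behind the claim: the paper itself gives no proof here but cites \cite[9.1.9]{Mori-Prokhorov-IIA-1}, where the formula is obtained exactly as you do, from $K_{\tilde W}=\Phi^*K_W+\frac34\Pi$, the proper-transform relations $\tilde S_\alpha=\Phi^*S_\alpha-\frac12\Pi$, $\tilde S_\beta=\Phi^*S_\beta-\Pi$ (coming from $\sigmaord(\alpha)=\frac12$, $\sigmaord(\beta)=1$), and complete-intersection adjunction, with $\Pi|_{\tilde H}=\Xi$. Your cautionary remarks about $\QQ$-Cartier multiples and the dualizing sheaf of the non-normal Cohen--Macaulay surface $\tilde H$ are exactly the right bookkeeping.
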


\begin{sclaim}\label{claim-computation-Xi-n}
Assume that the singularities of
$\tilde H^{\n}$ are Du Val
\textup(cf. Claim \xref{claim-construction-blowup-Du-Val}\textup).
Write $K_{\tilde H^{\n}}=\tilde \psi^* K_{\tilde H}-\Upsilon$, where
$\Upsilon$ is the effective divisor defined by the conductor ideal.
\begin{itemize}
\item
If $\Xi=2\Xi_1$, then $\hat \Xi_1^2=-4+\tau^*\Upsilon \cdot \hat \Xi_1$.
\item
If $\Xi=2\Xi_1+2\Xi_2$, then $\hat \Xi_i^2=-3+\tau^*\Upsilon \cdot\hat \Xi_i$.
\end{itemize}
\end{sclaim}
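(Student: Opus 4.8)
The plan is to compute $\hat\Xi_i^2$ by adjunction on the smooth surface $\hat H$ and then to evaluate $K_{\hat H}\cdot\hat\Xi_i$ by pulling the canonical class back through the three maps $\tau$, $\tilde\psi$, $\Phi$ in turn. Since each $\Xi_i$ is a smooth rational curve (Claim \ref{claim-1-construction-blowup}), its strict transform $\hat\Xi_i$ on the smooth surface $\hat H$ is again a smooth rational curve, so adjunction gives $\hat\Xi_i^2=-2-K_{\hat H}\cdot\hat\Xi_i$. Thus the whole problem reduces to computing $K_{\hat H}\cdot\hat\Xi_i$.

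For this I would use that, by the standing hypothesis, $\tilde H^{\n}$ has only Du Val singularities, so the minimal resolution $\tau$ is crepant: $K_{\hat H}=\tau^*K_{\tilde H^{\n}}$. Setting $\bar\Xi_i:=\tau_*\hat\Xi_i$, the projection formula gives $K_{\hat H}\cdot\hat\Xi_i=K_{\tilde H^{\n}}\cdot\bar\Xi_i$. Now I substitute the conductor formula $K_{\tilde H^{\n}}=\tilde\psi^*K_{\tilde H}-\Upsilon$ and then $K_{\tilde H}=\Phi^*K_H-\frac34\Xi$ from Claim \ref{claim-5-equation-notation-blowup}, splitting $K_{\tilde H^{\n}}\cdot\bar\Xi_i$ into three terms. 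The term $(\Phi\tilde\psi)^*K_H\cdot\bar\Xi_i=K_H\cdot(\Phi\tilde\psi)_*\bar\Xi_i$ vanishes, since $\bar\Xi_i$ is contracted by $\Phi\circ\tilde\psi$. The term $\Upsilon\cdot\bar\Xi_i$ equals $\tau^*\Upsilon\cdot\hat\Xi_i$ by the projection formula — this is exactly the correction term appearing in the statement. The middle term is $-\frac34\,\tilde\psi^*\Xi\cdot\bar\Xi_i=-\frac34\,\Xi\cdot\Xi_i$, where I use $\tilde\psi_*\bar\Xi_i=\Xi_i$: because $\Xi_i$ is not contained in the non-normal locus $\tilde C$, the induced map $\bar\Xi_i\to\Xi_i$ is birational of degree $1$.

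It then remains to compute $\Xi\cdot\Xi_i$ on $\tilde H$. Since $\Xi=\Pi|_{\tilde H}$ and $\Xi_i\subset\Pi$, I have $\Xi\cdot\Xi_i=\deg\bigl(\OOO_{\tilde W}(\Pi)|_{\Xi_i}\bigr)=\deg\bigl(\OOO_\Pi(\Pi)|_{\Xi_i}\bigr)=-4\deg_{\OOO_\PP(1)}\Xi_i$, using $\OOO_\Pi(\Pi)\simeq\OOO_\PP(-4)$ from Construction \ref{construction-w-blowup-X}. The degrees of the $\Xi_i$ are read off from their descriptions as complete intersections in $\Pi\simeq\PP(1,1,3,2)$ via $\OOO_\PP(1)^3=\frac16$: in the case $\Xi=2\Xi_1$ one has $\Xi_1=\{y_2=y_4^2+\lambda y_1y_3=0\}$ of degrees $1$ and $4$, so $\deg\Xi_1=\frac{1\cdot4}{6}=\frac23$; in the case $\Xi=2\Xi_1+2\Xi_2$ each $\Xi_i$ is cut out by $y_2$ and a degree-$2$ form, so $\deg\Xi_i=\frac13$. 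Assembling everything gives $\hat\Xi_i^2=-2-3\deg\Xi_i+\tau^*\Upsilon\cdot\hat\Xi_i$, which yields $-4+\tau^*\Upsilon\cdot\hat\Xi_1$ and $-3+\tau^*\Upsilon\cdot\hat\Xi_i$ in the two cases.

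The delicate point I would spell out most carefully is the bookkeeping of the three pullbacks as $\QQ$-divisors on the non-normal surface $\tilde H$ and its normalization, making sure each projection formula is applied with the correct multiplicity — above all that $\bar\Xi_i\to\Xi_i$ is degree $1$ rather than the double cover that occurs along $\tilde C$, and that $\hat\Xi_i\cong\PP^1$ so $p_a(\hat\Xi_i)=0$. The remaining ingredients — crepancy of $\tau$, the two adjunction-type formulas, and the weighted-projective degree count — are then mechanical.
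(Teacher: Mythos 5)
Your argument is correct and follows essentially the same route as the paper: adjunction $\hat\Xi_i^2=-2-K_{\hat H}\cdot\hat\Xi_i$ combined with $K_{\hat H}=\varphi^*K_{\tilde H}-\tau^*\Upsilon$. The only difference is that you re-derive the input $K_{\tilde H}\cdot\Xi_i$ ($=2$, resp. $=1$) from Claim \xref{claim-5-equation-notation-blowup}, the contraction of $\Xi_i$ by $\Phi$, and the weighted-projective degree count $\OOO_\Pi(\Pi)|_{\Xi_i}=\OOO_{\PP}(-4)|_{\Xi_i}$, whereas the paper imports this value from \cite[Claim 9.1.10]{Mori-Prokhorov-IIA-1}; your numbers agree with the paper's.
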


\begin{proof}
Consider, for example, the first case $\Xi=2\Xi_1$.
As in \cite[Claim 9.1.10]{Mori-Prokhorov-IIA-1},
$K_{\tilde H}\cdot \Xi_1=2$.
Since $\tilde H$ has only Du Val singularities, we have 
\begin{equation*}
K_{\hat H}=\varphi^*K_{\tilde H} - \tau^*\Upsilon,\qquad
K_{\hat H}\cdot \hat \Xi_1= K_{\tilde H}\cdot \Xi_1
-\hat \Xi_1\cdot\tau^*\Upsilon.
\end{equation*}
Therefore, $\hat \Xi_1^2=-2- K_{\hat H}\cdot \hat \Xi_1=
-4+\hat \Xi_1\cdot\tau^*\Upsilon$.
\end{proof}

\begin{computation}\label{computation-lP=3+III-part2}
In the notation of \xref{notation-blowup}, let 
\begin{eqnarray*}
\alpha&=&y_1^3y_3+y_2^2+y_3^2+\delta y_4^3+c y_1^2y_4^2+\epsilon y_1y_3y_4
+y_2\alpha'+\alpha'',
\\
\beta&= &y_4^2+\nu y_2y_3+\textstyle{\frac 1c} y_1y_3+y_2\beta'+\beta'',
\end{eqnarray*}
where $c$, $\nu$, $\delta$, $\epsilon$ are constants such that
$c\neq 0$ and $\epsilon c\neq \delta$.
We assume that the hypothesis of \xref{computations-notation}
are satisfied.
Then the graph $\Delta(H,C)$ has one of
the following forms:
\begin{equation}\label{graphs-computation-lP=3+III-part2}
\vcenter{
\xy
\xymatrix@R=5pt@C=10pt{
\mathrm{a)}&\overset{C}\bullet\ar@{-}[d]&\ovalh{\phantom{P}}\ar@{-}[d]
\\
\underset{C}\bullet\ar@{-}[r] &\circ\ar@{-}[r] &\underset{3}\circ\ar@{-}[r]&\circ \ar@{-}[r]&\circ
}
\endxy
}
\hspace{30pt}
\vcenter{
\xy
\xymatrix@R=5pt@C=13pt{
\mathrm{b)}&&\ovalh{\phantom{P}}\ar@{-}[d]
\\
\underset{C}{\bullet}\ar@{-}[r] &\circ\ar@{-}[r] &\underset{3}\circ\ar@{-}[r]&\circ \ar@{-}[r]&\circ
}
\endxy
}
\end{equation}
where $\ovalh{\phantom{P}}$ is a non-empty 
connected Du Val subgraph.
In the second case the normalization of $H$ is a bijection.
\end{computation}

\begin{proof}
We use the notation of \xref{notation-blowup}.
By Remark \ref{remark-computation-normality}, \ $y_1^jy_2\notin \beta$ for any $j$.
By \ref{claim-1-construction-blowup} we have
$\Xi=2\Xi_1$, where $\Xi_1:=\{y_2=y_4^2+\frac 1c y_1y_3=0\}$.
The first equation modulo the second one can be rewritten in the form
\begin{equation*}
\alpha=y_2^2+y_3^2+\delta y_4^3+\epsilon y_1y_3y_4
+y_2\alpha'+\alpha''.
\end{equation*}

\begin{sclaim}
The point $O\in \tilde H$ is analytically
isomorphic to a hypersurface singularity of the form
\begin{equation*}
\{y_2^2+y_1y_4^3+ \theta y_1^ry_4^2=0\}\subset \CC^3,
\end{equation*}
where
again $\tilde C$ is the $y_1$-axis, $\theta\in \CC$, and
$r\ge 2$.
\end{sclaim}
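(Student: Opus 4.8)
The plan is to exhibit $\tilde H$ near $O$ as a hypersurface in a smooth threefold and to read off its equation. I would work in the chart $U_1\simeq\CC^4$, in which $\Phi$ is $(y_1,y_2,y_3,y_4)\mapsto(y_1,\,y_1y_2,\,y_1^3y_3,\,y_1^2y_4)$ and $\muu_4$ acts only through $y_1\mapsto\zeta y_1$, so that the coordinate of $U_1$ along $\tilde C$ is the invariant $y_1^4$. A monomial of $\sigma$-order $d$ pulls back to $y_1^{4d}$ times its $(y_2,y_3,y_4)$-part, and the minimal $\sigma$-orders of $\alpha$ and $\beta$ equal $1/2$ and $1$; using the form of $\alpha$ reduced modulo $\beta$ just obtained, the proper transforms $\tilde\alpha=(\alpha\circ\Phi)/y_1^2$ and $\tilde\beta=(\beta\circ\Phi)/y_1^4$ are
\begin{align*}
\tilde\alpha&=y_2^2+y_1^4y_3^2+\delta y_1^4y_4^3+\epsilon y_1^4y_3y_4+y_2(\cdots)+(\cdots),\\
\tilde\beta&=y_4^2+\tfrac1c y_3+\nu y_2y_3+(\cdots),
\end{align*}
where all powers of $y_1$ that occur are divisible by $4$ and the omitted terms lie in $(y_2,y_3,y_4)$ and carry strictly higher powers of $y_1$.

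Since $c\neq0$, the coefficient $1/c$ of $y_3$ in $\tilde\beta$ is a unit, so I would solve $\tilde\beta=0$ for $y_3=\rho=-cy_4^2+(\text{higher order})$ by the implicit function theorem, identifying $\tilde H$ near $O$ with the hypersurface $\{\hat\alpha=0\}$, $\hat\alpha:=\tilde\alpha|_{y_3=\rho}$. Because $\beta\in(y_2,y_3,y_4)$, setting $y_2=y_4=0$ forces $y_3=0$, so one checks that $\rho\in(y_2,y_4)$. Two features of $\hat\alpha$ then yield the statement. First, the coefficient of $y_1^4y_4^3$ is $\delta-\epsilon c$: the term $\delta y_1^4y_4^3$ is untouched, whereas $\epsilon y_1^4y_3y_4\mapsto\epsilon y_1^4(-cy_4^2)y_4+\cdots=-\epsilon c\,y_1^4y_4^3+\cdots$, and by hypothesis $\epsilon c\neq\delta$, so this coefficient is nonzero. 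Second, $\hat\alpha$ has no $y_1^4y_4^2$-term: reducing $\alpha$ modulo $\beta$ has removed the term $y_1^3y_3$, the only summand of the form $(\text{power of }y_1)\cdot y_3$, so among the surviving summands $y_3^2\mapsto\rho^2\sim y_4^4$ and $y_3y_4\mapsto\rho y_4\sim y_4^3$ produce no pure $y_4^2$, the summand $y_2\alpha'$ retains its factor $y_2$, and $\alpha''$ (of $\sigma$-order $>3/2$) contributes only powers $y_1^{\ge8}$. Hence the pure $y_4^2$-term of $\hat\alpha$ is divisible by $y_1^8$.

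Finally I would normalize. Completing the square in $y_2$, whose coefficient is a unit, gives $\hat\alpha\cong y_2^2+B(y_1,y_4)$ with $B\in(y_4^2)$, whose lowest terms are $(\delta-\epsilon c)y_1^4y_4^3$ and a $y_4^2$-term divisible by $y_1^8$. Descending through the $\muu_4$-quotient---that is, replacing the invariant $y_1^4$ by the coordinate of $U_1$, again written $y_1$---divides every exponent of $y_1$ by $4$ and produces $y_2^2+(\delta-\epsilon c)y_1y_4^3+\theta y_1^ry_4^2+(\text{higher order})$ with $r\ge2$. After rescaling to normalize $\delta-\epsilon c$ to $1$, an analytic change of the $y_4$-coordinate preserving the singular locus $\{y_2=y_4=0\}=\tilde C$ should absorb the remaining higher-order terms, giving the asserted form with $r\ge2$ and $\theta\in\CC$. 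I expect this last normalization to be the main obstacle: along the non-isolated singular curve $\tilde C$ one must argue that the higher-order part of $B$ is removable and that precisely $y_1y_4^3$ and $\theta y_1^ry_4^2$ persist. By contrast, the two substantive inputs---non-degeneracy of the $y_4^3$-coefficient and the bound $r\ge2$---have already been secured by the cancellation above, which is exactly where the hypothesis $\epsilon c\neq\delta$ and the reduction modulo $\beta$ enter.
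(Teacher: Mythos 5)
Your proposal is correct and takes essentially the same route as the paper: work in the chart $U_1$, eliminate $y_3$ from $\beta=0$ (its coefficient $1/c$ being a unit), observe that the coefficient of $y_1y_4^3$ becomes $\delta-\epsilon c\neq 0$ while every pure $y_1^ay_4^2$-term has $a\ge 2$, and then normalize. The paper merely organizes the residual terms via the explicit ideal $\mathfrak I=(y_1^2y_4^2,\,y_2^3,\,y_1y_2^2,\,y_1y_2y_4,\,y_1y_4^4)$ and is just as terse about the final analytic equivalence (``clearly \dots\ equivalent to the desired form''), so the step you flag as the main obstacle receives no more detail there.
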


\begin{proof}
In the affine chart $U_1$
the equations of $\tilde H$ have the following form
\begin{eqnarray*}
\alpha_{U_1}&=&y_2^2+y_1y_3^2+\delta y_1y_4^3+\epsilon y_1y_3y_4
+y_1y_2\alpha_\bullet+y_1^2\alpha_{\blacktriangle},
\\
\beta_{U_1}&=&y_4^2+\nu y_2y_3+\textstyle{\frac 1c} y_3+y_2\beta_\bullet+y_1\beta_{\blacktriangle},
\end{eqnarray*}
where $\alpha_\bullet\in (y_2,y_3,y_4)$,
$\alpha_{\blacktriangle}$, $\beta_{\blacktriangle}\in (y_2,y_3,y_4)^2$, $\beta_{\bullet}\in (y_2,y_4)$.
{}From the second equation we obtain
\begin{equation*}
y_3= -cu (y_4^2+y_2\beta_\circ+y_1\beta_{\scriptscriptstyle \triangle}),
\end{equation*}
where $\beta_\circ\in (y_2, y_4)$,
$\beta_{\scriptscriptstyle \triangle}\in (y_2, y_4)^2$, and $u$ is a unit such that $u(0)=1$.
Consider the ideal
\begin{equation*}
\mathfrak I:=\left(y_1^2y_4^2,\, y_2^3,\, y_1y_2^2,\,
y_1y_2y_4,\, y_1y_4^4\right).
\end{equation*}
Then we can eliminate $y_3$ in the first equation modulo $\mathfrak I$:
\begin{equation*}
\alpha_{U_1}\equiv y_2^2+(\delta -c\epsilon u) y_1y_4^3 \mod \mathfrak I.
\end{equation*}
Thus, for some $v_i\in \CC\{y_1,y_2,y_4\}$, we can write
\begin{equation*}
\alpha_{U_1}=y_2^2+(\unit) y_1y_4^3+v_1y_1^2y_4^2+v_2y_2^3+v_3y_1y_2^2+v_4y_1y_2y_4+v_5y_1y_4^4.
\end{equation*}
Clearly, the last equation
is analytically equivalent to the desired form.
\end{proof}

\begin{scorollary}\label{scorollary-lP=3+III-sing}
Let $\tilde \psi: \tilde H^{\n}\to \tilde H$ be the blowup of $\tilde C$.
Then $\tilde H^{\n}$ coincides with the normalization and
has exactly one singular point which is of type \type{A_1}.
Moreover,
if $r=2$ and $\theta\neq 0$, then
the preimage $\tilde C^{\n}:=\tilde \psi^{-1}(\tilde C)_{\red}$ has two components
and $\tilde C^{\n}\to \tilde C$ is a double cover.
If $\theta=0$, then $\tilde C^{\n}$ is irreducible and $\tilde C^{\n}\to \tilde C$
is a bijection (near $O$).
If $r>2$ and $\theta\neq 0$, then the total
transform of $\tilde C^{\n}$ on the minimal resolution is not a normal crossing
divisor.
\end{scorollary}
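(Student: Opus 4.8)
The plan is to reduce the whole statement to an explicit blow-up of the surface germ produced by the preceding Claim, namely $\tilde H=\{y_2^2+y_1y_4^3+\theta y_1^ry_4^2=0\}\subset\CC^3_{y_1,y_2,y_4}$, in which $\tilde C$ is the $y_1$-axis and $O$ is the origin. First I would blow up $\tilde C=\{y_2=y_4=0\}$ in the ambient $\CC^3$ and take the proper transform of $\tilde H$; this realizes $\tilde\psi\colon\tilde H^{\n}\to\tilde H$. In the chart $y_2=y_4s$ the defining equation becomes $y_4^2(s^2+y_1y_4+\theta y_1^r)$, so the transform is $S':=\{s^2+y_1y_4+\theta y_1^r=0\}$; in the chart $y_4=y_2t$ it becomes $y_2^2(1+y_1y_2t^3+\theta y_1^rt^2)$, whose transform meets $\{y_1=0\}$ in the empty set. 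Hence over $O$ everything takes place in the first chart, and $\tilde\psi^{-1}(O)$ is the single point $O'=\{y_1=s=y_4=0\}$.

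For the first assertion I would argue that $S'$ is singular only at $O'$ (the partials $2s$, $y_1$, $y_4+r\theta y_1^{r-1}$ vanish simultaneously only at the origin) and that its quadratic part $s^2+y_1y_4$ is nondegenerate, so $O'\in S'$ is an ordinary double point, i.e.\ Du Val of type $\mathrm{A_1}$. Being a hypersurface regular in codimension one, $S'$ is normal. Since $\tilde\psi$ is an isomorphism over the smooth locus $\tilde H\setminus\tilde C$ and has finite fibres over $\tilde C$ (the fibre over a point of $\tilde C$ is cut out by $\{y_4=0\}$ in $S'$, hence finite), it is finite and birational; as $S'$ is normal, a finite birational morphism with normal source is the normalization. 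This gives that the blow-up of $\tilde C$ coincides with $\tilde H^{\n}$ and that it has the unique singular point $O'$, of type $\mathrm{A_1}$.

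Next I would compute $\tilde C^{\n}=\tilde\psi^{-1}(\tilde C)_{\red}=\{y_4=0,\ s^2+\theta y_1^r=0\}_{\red}\subset S'$, with map to $\tilde C$ given by $y_1\mapsto y_1$. If $\theta=0$ this is the reduced line $\{s=y_4=0\}$, irreducible and mapping bijectively onto $\tilde C$. If $r=2$ and $\theta\neq0$, then $s^2+\theta y_1^2$ factors into two distinct linear forms, so $\tilde C^{\n}$ has two smooth components each isomorphic to $\tilde C$ and $\tilde C^{\n}\to\tilde C$ is a double cover. To control the total transform on the minimal resolution I would blow up $O'$: the minimal resolution of the $\mathrm{A_1}$ point is a single blow-up, and in the chart $s=y_1c$, $y_4=y_1d$ the proper transform of $S'$ is the smooth surface $\{d=-c^2-\theta y_1^{r-2}\}$ with exceptional $(-2)$-curve $E=\{y_1=0\}$, on which the proper transform $\hat C$ of $\tilde C^{\n}$ becomes $\{c^2+\theta y_1^{r-2}=0\}$. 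For $r=2$ this is the pair of lines $\{c=\pm\sqrt{-\theta}\}$, meeting $E$ transversally at two distinct points, so $\hat C\cup E$ is a normal crossing divisor, consistent with the running hypothesis.

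The remaining case $r>2$, $\theta\neq0$ is the main obstacle and the source of the dichotomy. Here $\hat C=\{c^2+\theta y_1^{r-2}=0\}$ is either smooth but tangent to $E=\{y_1=0\}$ (when $r=3$) or singular at its point of contact with $E$ (when $r\geq4$); in both situations $\hat C\cup E$ fails to be a normal crossing divisor. I expect the delicate points to be bookkeeping ones: keeping the nonreduced scheme structure of $\{y_4=0\}\cap S'$ and of the exceptional fibre straight, verifying that a single blow-up already yields the normalization rather than assuming it, and checking that the intersection of $\hat C$ with $E$ is entirely visible in the displayed affine chart, so that the tangency or the singularity can be read off directly.
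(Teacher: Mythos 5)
Your computation is correct and is exactly the argument the paper intends: the corollary is stated as an immediate consequence of the preceding Claim's normal form $\{y_2^2+y_1y_4^3+\theta y_1^ry_4^2=0\}$, and the proof is precisely the explicit blow-up of $\tilde C$ (and then of the resulting $\mathrm{A_1}$ point) that you carry out, including the checks you flag at the end, all of which go through as you expect.
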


\begin{sclaim}\label{claim-new-11-3}
The intersection $\Xi_1\cap \Sing(\tilde H)$ consists of three points:
$O$, $Q$, and the point $O'\in \Xi_1\cap \Lambda\setminus\{O\}=
\{(0:0: -(\delta -c\epsilon)c : \delta -c\epsilon)\}$.
\end{sclaim}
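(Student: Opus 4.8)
The plan is to first pin down the whole singular locus of $\tilde H$ and then cut it with $\Xi_1$. In the present situation $\lambda=1/c\neq 0$, so Claim~\ref{claim-3-construction-blowup-a} applies and gives
\[
\Sing(\tilde H)=\bigl(\Supp(\Xi)\cap\Lambda\bigr)\cup\{Q\}\cup\tilde C .
\]
Since $\lambda\neq 0$ we also have $\Xi=2\Xi_1$, hence $\Supp(\Xi)=\Xi_1$, and intersecting with $\Xi_1$ reduces the claim to the identification
\[
\Xi_1\cap\Sing(\tilde H)=(\Xi_1\cap\Lambda)\cup\{Q\}\cup(\Xi_1\cap\tilde C).
\]

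First I would dispose of the two outer pieces. By Claim~\ref{claim-1-construction-blowup} every component of $\Xi$ passes through $Q$, so $Q\in\Xi_1$; and by Claim~\ref{claim-2-construction-blowup} $Q$ is a Du~Val point of $\tilde H$, so $Q\in\Sing(\tilde H)$. Next, $\tilde C\cap\Pi=\{O\}$ by Construction~\ref{notation-computations--sing} and $O\in\Xi_1$, whence $\Xi_1\cap\tilde C=\{O\}$; as $H$ is singular along $C$, the curve $\tilde C$ lies in $\Sing(\tilde H)$ and so $O$ does too. Finally $Q\notin\Lambda$, because the weighted form $\alpha_{\sigma=6/4}$ cutting out $\Lambda$ restricts on $\{y_2=0\}$ to a polynomial whose value at $Q=(0{:}0{:}1{:}0)$ is $y_3^2\neq 0$; this already separates $O'$ (which lies on $\Lambda$) from $Q$.

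The core of the proof is the computation of $\Xi_1\cap\Lambda$. Both curves lie on $E=\{y_2=0\}\subset\Pi$, which is $\PP(1,3,2)$ with coordinates $y_1,y_3,y_4$; there $\Xi_1=\{y_4^2+\tfrac1c y_1y_3=0\}$ and $\Lambda=\{y_1^3y_3+y_3^2+\delta y_4^3+cy_1^2y_4^2+\epsilon y_1y_3y_4=0\}$. I would use the relation $y_4^2=-\tfrac1c y_1y_3$ holding on $\Xi_1$ to eliminate: the terms $y_1^3y_3$ and $cy_1^2y_4^2$ cancel, while $\delta y_4^3$ becomes $-\tfrac{\delta}{c}y_1y_3y_4$, so that the equation of $\Lambda$ restricts on $\Xi_1$ to
\[
y_3\Bigl(y_3+\bigl(\epsilon-\tfrac{\delta}{c}\bigr)y_1y_4\Bigr)=0 .
\]
The factor $y_3=0$ forces $y_4=0$ and returns $O$, while substituting the second factor back into the equation of $\Xi_1$ leaves a single admissible solution, namely the point $O'$ of the statement.

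The delicate point, and the only real obstacle, is distinctness together with completeness. The extra solution $O'$ degenerates to $O$ exactly when its $y_4$-coordinate vanishes, i.e.\ when $\delta=c\epsilon$; the standing hypothesis $\epsilon c\neq\delta$ of the computation rules this out, so $O'\neq O$, and $O'\neq Q$ was already seen above. Completeness follows from a degree count on $E=\PP(1,3,2)$: writing $G=\{y_1=0\}$ with $G^2=\tfrac16$, one has $\Xi_1\sim 4G$ and $\Lambda\sim 6G$ (the latter as in the proof of Claim~\ref{claim-singularities-Lambda}), so $\Xi_1\cdot\Lambda=4$, which is entirely accounted for by multiplicity $3$ at $O$ and $1$ at $O'$. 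Hence $\Xi_1\cap\Lambda=\{O,O'\}$, and combining the three pieces gives $\Xi_1\cap\Sing(\tilde H)=\{O,Q,O'\}$, as asserted.
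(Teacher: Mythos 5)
Your argument is correct and is precisely the direct computation the paper leaves implicit: reduce via Claim~\ref{claim-3-construction-blowup-a} (applicable since $\lambda=1/c\neq0$, so also $\Xi=2\Xi_1$ by Claim~\ref{claim-1-construction-blowup}) to the three pieces $\{Q\}$, $\Xi_1\cap\tilde C=\{O\}$ and $\Xi_1\cap\Lambda$, and then compute the last one on $E=\PP(1,3,2)$ using $y_4^2=-\tfrac1c y_1y_3$; the factorization $y_3\bigl(y_3+(\epsilon-\tfrac{\delta}{c})y_1y_4\bigr)$ and the degree count $\Xi_1\cdot\Lambda=4=3+1$ both check out. One caveat: parametrizing $\Xi_1$ as $(1:0:-ct^2:t)$, the restriction of the equation of $\Lambda$ is $t^3(c^2t+\delta-c\epsilon)$, so the residual point is $O'=(1:0:-(\delta-c\epsilon)^2/c^3:-(\delta-c\epsilon)/c^2)$, which does not agree with the coordinates $(0:0:-(\delta-c\epsilon)c:\delta-c\epsilon)$ printed in the statement (that point fails to satisfy $y_4^2+\tfrac1c y_1y_3=0$, so it is not even on $\Xi_1$); your computation is the correct one and the printed coordinates appear to be a misprint, but you should not assert without verification that your solution coincides with the point as written --- the substantive content of the claim (a single further intersection point, distinct from $O$ and $Q$ because $\delta\neq c\epsilon$ and $Q\notin\Lambda$) is what you have actually, and correctly, proved.
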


Now to finish the proof of \ref{computation-lP=3+III-part2}
we notice that by Claim \ref{claim-computation-Xi-n} we have $\hat \Xi_1^2=-3$ \
because $\tau^*\Upsilon$ meets $\hat \Xi_1$
transversely.
This completes the proof of \ref{computation-lP=3+III-part2}.
\end{proof}

\begin{computation}\label{computation-lP=3a-III}
In the notation of \xref{notation-blowup}, let 
\begin{eqnarray*}
\alpha&=&y_1^ly_3+y_2^2+y_3^2+\delta y_4^{2k+1}+c y_1^2y_4^2+\epsilon y_1y_3y_4
+y_2\alpha'+\alpha'',
\\
\beta&=&y_4^2+\nu y_2y_3+\lambda y_1y_3+y_2\beta'+\beta'',
\end{eqnarray*}
where
$l\equiv 3\mod 4$,\
\ $k\ge 1$.
We assume that the hypothesis of \xref{computations-notation}
are satisfied, $\lambda$ is general with respect to $\delta$ and $c$, and
if $l>3$, then $c\neq 0$.
Then the preimage of $C$ on the normalization is
irreducible and the graph $\Delta(H,C)$ has
the following form:
\begin{equation}\label{graph-diagram-non-normal}
\vcenter{\hbox{
\xy
\xymatrix@R=10pt@C=19pt{
&\circ\ar@{-}[d]
\\
\underset C \bullet \ar@{-}[r]
&\underset 3\circ\ar@{-}[r]&\circ\ar@{-}[r]&\circ
\\
&\circ\ar@{-}[u]
}
\endxy
}
}
\end{equation}
\end{computation}

\begin{proof}
We use the notation of \xref{notation-blowup}.
By Remark \ref{remark-computation-normality}, \ $y_1^jy_2\notin \beta$ for any $j$.
We also may assume that $\alpha''$ does not contain any terms of the form $y_4^r$.
By \ref{claim-1-construction-blowup} we have
$\Xi=2\Xi_1$, where $\Xi_1:=\{y_2=y_4^2+\lambda y_1y_3=0\}$.
Since $\lambda\neq 0$, by Claim \ref{claim-3-construction-blowup-a} the set
$\Sing(\tilde H)$ is contained in
$\tilde C\cup \{Q\}\cup \Lambda$.

\begin{sclaim}
The intersection $\tilde H\cap \Lambda$ consists of $O$ and two more
distinct points $P_1$ and $P_2$.
Moreover, $\tilde H$ meets $\Lambda$ transversely at $P_1$ and $P_2$ 
and has singularities of type \type{A_1} at these points.
\end{sclaim}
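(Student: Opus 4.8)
The plan is to reduce the statement to an explicit computation inside the exceptional surface \(E=\{y_2=0\}\subset\Pi\), which is isomorphic to \(\PP(1,3,2)\) with weighted coordinates \(y_1,y_3,y_4\). Because \(\tilde H\cap\Pi=\Xi=2\Xi_1\) and \(\Lambda\subset\Pi\), a point of \(\Lambda\) lies on \(\tilde H\) if and only if it lies on \(\Xi_1\); hence, set-theoretically, \(\tilde H\cap\Lambda=\Xi_1\cap\Lambda\), an intersection taking place entirely in \(E\). There \(\Xi_1=\{y_4^2+\lambda y_1y_3=0\}\) has degree \(4\) and \(\Lambda=\{\alpha_{\sigma=6/4}=0\}\) has degree \(6\), so the weighted B\'ezout formula gives \(\Xi_1\cdot\Lambda=\tfrac{4\cdot6}{1\cdot3\cdot2}=4\) for the length of the intersection scheme.

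First I would locate the points. Using the relation \(y_4^2=-\lambda y_1y_3\) of \(\Xi_1\) inside \(\alpha_{\sigma=6/4}|_{y_2=0}\) and factoring out \(y_3\) gives
\[
\alpha_{\sigma=6/4}\big|_{\Xi_1}=y_3\bigl(y_3+\kappa y_1^3+\rho y_1y_4\bigr),
\]
with \(\rho\) a constant and \(\kappa=1-c\lambda\) for \(l=3\), \(\kappa=-c\lambda\) for \(l>3\). The factor \(y_3=0\) meets \(\Xi_1\) only at \(O=(1:0:0)\), and since \(\kappa\neq0\) the second factor does not vanish at \(O\). Eliminating \(y_3\) from the second factor and setting \(y_1=1\) yields the quadratic \(y_4^2-\rho\lambda y_4-\kappa\lambda=0\). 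Generality of \(\lambda\), together with \(c\neq0\) when \(l>3\), forces \(\kappa\neq0\) and nonzero discriminant, so this quadratic has two distinct roots, both nonzero as their product is \(-\kappa\lambda\). These give two distinct points \(P_1,P_2\), each with \(y_1\neq0\) (so \(P_i\in U_1\)) and each \(\neq O\); since \(Q=(0:1:0)\notin\Lambda\), no intersection is hidden at the singular point of \(E\). Hence \(\tilde H\cap\Lambda=\{O,P_1,P_2\}\), and comparison with the B\'ezout length shows each \(P_i\) is a reduced, length-one intersection point.

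For the local type I would argue as follows. Since \(\Lambda\subset\Pi\) and the proper transform \(\tilde\beta\) of \(\beta\) satisfies \(\tilde\beta|_\Pi=\beta_{\sigma=1}\), the restriction \(\tilde\beta|_\Lambda\) equals \((y_4^2+\lambda y_1y_3)|_\Lambda\), whose vanishing locus on \(\Lambda\) is exactly \(\Xi_1\cap\Lambda\), with order at each point equal to the local intersection multiplicity. The reducedness just established therefore shows that \(\tilde\beta\) has a simple zero along \(\Lambda\) at \(P_1\) and \(P_2\) — this is the asserted transversality of \(\tilde H=\{\tilde\beta=0\}\) with \(\Lambda\). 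For general \(\lambda\) the points \(P_1,P_2\) are general points of \(\Lambda\), so by Claim~\ref{claim-singularities-Lambda} the threefold \(\tilde X\) has a \type{cA_1} singularity at each, with analytic model \(\{u_1u_2+u_3^2=0\}\) whose singular locus is the line \(\Lambda\). A simple zero of \(\tilde\beta\) along that line lets one solve \(\tilde\beta=0\) for the coordinate transverse to \(\Lambda\) and substitute, leaving the node \(\{u_1u_2+u_3^2=0\}\) in the three remaining variables; thus \(\tilde H\) has type \type{A_1} at \(P_1\) and \(P_2\).

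The step needing the most care is the last one: the \type{A_1} conclusion relies on the \type{cA_1} normal form of Claim~\ref{claim-singularities-Lambda}, which applies only at general points of \(\Lambda\). Here the curve \(\Lambda\) is fixed (it depends only on \(\alpha\)), while \(\Xi_1\), and hence \(P_1,P_2=\Xi_1\cap\Lambda\setminus\{O\}\), vary with \(\lambda\); as \(\lambda\) ranges generically the points \(P_i\) sweep out general positions on \(\Lambda\) and avoid its finitely many special points. So the genericity hypothesis on \(\lambda\) is precisely what guarantees that \(\tilde X\) is exactly \type{cA_1} there. The remaining verifications reduce to the routine chart computation in \(U_1\) already performed in Computation~\ref{computation-lP=3+III-part2}.
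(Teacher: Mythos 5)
Your argument is correct and takes essentially the same route as the paper: the paper computes $(\tilde V|_\Pi\cdot \Lambda)_\Pi=4$ with local contribution $2$ at $O$, and then concludes by Bertini (the base locus of the relevant linear system on $\Pi$ meets $\Lambda$ only at $O$) combined with Claim~\ref{claim-singularities-Lambda} --- precisely the decomposition $4=2+1+1$ and the appeal to the \type{cA_1} structure along $\Lambda$ that you use. Your only deviation is to replace the Bertini step by the explicit factorization $\alpha_{\sigma=6/4}|_{\Xi_1}=y_3\bigl(y_3+\kappa y_1^3+\rho y_1y_4\bigr)$ and the discriminant of the resulting quadratic in $y_4$, which makes the genericity of $\lambda$ concrete but establishes the same facts.
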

\begin{proof}
Consider the hypersurface $V\subset W$ defined by $\beta=0$.
Let $\tilde V\subset \tilde W$ be its proper transform.
So, $\tilde H=\tilde X\cap\tilde V$.
We have $(\tilde V|_\Pi\cdot \Lambda)_{\Pi}=4$ and the local intersection number at $O$
equals $2$. Since the base locus of the linear system
on $\Pi$ generated by $\tilde V|_{\Pi}$ meets $\Lambda$ only at $O$, the last assertion follows by
Bertini's theorem and Claim \ref{claim-singularities-Lambda}.
\end{proof}

\begin{sclaim}
$\tilde H\ni O$ is a pinch point.
\end{sclaim}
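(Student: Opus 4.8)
The plan is to pull the two defining equations of $H$ back to the weighted blow-up $\tilde W$ and read off the local analytic type of $\tilde H$ at $O$ in the chart $U_1\simeq\CC^{4}$ of \xref{notation-computations--sing}, exactly as in the proof of \xref{computation-lP=3+III-part2}. Using the same chart coordinates $y_1,\dots,y_4$ there (so that $\Pi=\{y_1=0\}$, the exceptional coordinate $y_1$ is the old $y_1^{4}$, and $\tilde C$ is the $y_1$-axis), the surface $\tilde H$ is cut out by $\tilde\alpha=\alpha/y_1^{2}$ and $\tilde\beta=\beta/y_1^{4}$. The structural fact I would establish first is that $y_2^{2}$ is the \emph{unique} monomial of $\alpha$ of $\sigmaord$ equal to $1/2$: the monomials of $\sigmaord$ equal to $1/2$ are $y_2^{2}$, $y_1y_2$ and $y_4$, and the latter two are excluded because the linear part of $\alpha$ is $y_1^{l}y_3$. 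Since a monomial of $\alpha$ of $\sigmaord$ equal to $s$ contributes to $\tilde\alpha$ a term of $y_1$-order $s-1/2$, this means that $y_2^{2}$ is the only summand of $\tilde\alpha$ not divisible by the exceptional coordinate $y_1$; thus
\begin{equation*}
\tilde\alpha=y_2^{2}+y_1\bigl(c\,y_4^{2}+\cdots\bigr),\qquad
\tilde\beta=\lambda y_3+y_4^{2}+\nu y_2y_3+\cdots .
\end{equation*}

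Because $\lambda\neq 0$ (see \xref{claim-1-construction-blowup}), I would solve $\tilde\beta=0$ for $y_3$, getting $y_3=-\tfrac1\lambda y_4^{2}+\cdots$, and substitute into $\tilde\alpha$. The term $y_1^{l}y_3$ of $\alpha$ produces in $\tilde\alpha$ the summand $y_1y_3$ when $l=3$ and $y_1^{2}y_3$ when $l=7$; after the substitution only the former feeds into the $y_1y_4^{2}$-coefficient, and the surface $\tilde H\ni O$ acquires the equation
\begin{equation*}
F:=y_2^{2}+u\,y_1y_4^{2}+(\text{higher order})=0\subset\CC^{3}_{y_1,y_2,y_4},\qquad
u=\begin{cases} c-\tfrac1\lambda,& l=3,\\ c,& l=7. \end{cases}
\end{equation*}
By the genericity hypothesis of \xref{computation-lP=3a-III} ($\lambda$ general relative to $c$, and $c\neq 0$ when $l>3$) one has $u\neq 0$. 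This is precisely where the present computation parts company with \xref{computation-lP=3+III-part2}: there the special value $\lambda=1/c$ forces $u=0$, so the leading term becomes $y_1y_4^{3}$ and one lands on the more degenerate singularity $y_2^{2}+y_1y_4^{3}+\theta y_1^{r}y_4^{2}$ instead.

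Finally I would bring $F$ to the normal form of Definition~\xref{definition-pinch-point}. Completing the square in $y_2$ writes $F=y_2^{2}+G(y_1,y_4)$ with $G=u\,y_1y_4^{2}+\cdots$. By \xref{notation-blowup} the surface $H$, hence $\tilde H$, is singular along $C$ and therefore along $\tilde C=\{y_2=y_4=0\}$; forcing $F$ together with its first partials to vanish on this axis gives $G(y_1,0)=\partial_{y_4}G(y_1,0)=0$, so that $G=y_4^{2}\tilde G(y_1,y_4)$ with $\tilde G=u\,y_1+\cdots$. As $u\neq 0$, $\tilde G$ is a submersion, hence an analytic coordinate, and replacing $y_1$ by $\tilde G$ turns $F$ into $y_2^{2}+(\text{coordinate})\cdot y_4^{2}$, i.e.\ the pinch point. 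I expect the only delicate step to be the two pieces of bookkeeping underlying the first paragraph — that $y_2^{2}$ is the unique monomial of $\alpha$ of $\sigmaord$ equal to $1/2$, and that the coefficient of $y_1y_4^{2}$ after the elimination is exactly $u$ — for it is the nonvanishing of this single constant $u$, guaranteed by the generality of $\lambda$, that separates the pinch point from the strictly worse singularities treated in \xref{computation-lP=3+III-part2}.
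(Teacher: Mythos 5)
Your proof is correct and follows essentially the same route as the paper: pass to the chart $U_1$, use $\lambda\neq 0$ to solve $\tilde\beta=0$ for $y_3$, and observe that after elimination the equation of $\tilde H$ at $O$ becomes $y_2^2+u\,y_1y_4^2+\cdots=0$ with $u=c-1/\lambda$ (for $l=3$) resp.\ $u=c$ (for $l>3$), nonzero by the stated generality hypotheses. Your extra bookkeeping (uniqueness of the $\sigma$-order $1/2$ monomial $y_2^2$, and divisibility of the remainder by $y_4^2$ via the singularity of $\tilde H$ along $\tilde C$) only makes explicit what the paper leaves implicit in its final normal-form step.
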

\begin{proof}
In the affine chart $U_1$
the equations of $\tilde H$ have the form
\begin{equation*}
\begin{aligned}
0&=y_1^{(l+1)/4}y_3+y_2^2+y_1 (y_3^2+\delta y_1^{k-1}y_4^{2k+1}+c y_4^2+\epsilon y_3y_4
+y_2\alpha_{\bullet}+y_1\alpha_{\blacktriangle}),
\\
0&=y_4^2+\nu y_2y_3+\lambda y_3+y_2\beta_\bullet+y_1\beta_\blacktriangle,
\end{aligned}
\end{equation*}
where $\beta_\bullet\in (y_2,\, y_3,\, y_4)$,
$\beta_\blacktriangle\in (y_2,\, y_3,\, y_4)^2$.
{}From the second equation we have
\begin{equation*}
y_3= u(y_4^2+y_2\beta_\circ+y_1\beta_{\scriptscriptstyle \triangle}),
\end{equation*}
where $u$ is a unit such that $u(0)=-1/\lambda$ and $\beta_\circ,\, \beta_{\scriptscriptstyle \triangle}\in (y_2,\, y_4)$.
Eliminating $y_3$ we obtain
\begin{multline*}
uy_1^{(l+1)/4} (y_4^2+y_2\beta_\circ+y_1\beta_{\scriptscriptstyle \triangle})+y_2^2+
u^2y_1(y_4^2+y_2\beta_\circ+y_1\beta_{\scriptscriptstyle \triangle})^2+
\\
\delta y_1^ky_4^{2k+1}+c y_1y_4^2+\epsilon u y_1y_4(y_4^2+y_2\beta_\circ+y_1\beta_{\scriptscriptstyle \triangle})
+y_1y_2\alpha_{\bullet}+y_1^2\alpha_{\blacktriangle}=0,
\end{multline*}
{}From this we see that the equation of $\tilde H$ at $O$ can be written in the form
$y_2^2+y_1y_4^2+\cdots=0$,
i.e. $\tilde H\ni O$ is a pinch point.
\end{proof}
Now to finish the proof of \ref{computation-lP=3a-III}
we  notice that by Claim \ref{claim-computation-Xi-n} we have $\hat \Xi_1^2=-3$ \
because $\tau^*\Upsilon$ is reduced and meets $\hat \Xi_1$
transversely.
\end{proof}

\begin{computation}\label{computation-lP=4+III}
In the notation of \xref{notation-blowup}, let 
\begin{eqnarray*}
\alpha&=&y_1^{4l}y_4+y_2^2+y_3^2+\delta y_4^{2k+1}+c y_1^2y_4^2+\epsilon y_1y_3y_4+y_2\alpha'+\alpha'',
\\
\beta&=&y_4^2+\nu y_2y_3+\eta y_1^2y_4+ y_2\beta'+\beta'',
\end{eqnarray*}
where $l,\, k\ge 1$,\ $c,\, \epsilon \in \CC$,
\ $\delta,\, \eta\in \CC^*$, and $\eta$ is general with respect to $\alpha$.
We assume that the hypothesis of \xref{computations-notation}
are satisfied.
Then the graph $\Delta(H,C)$ has one of the following forms:
\begin{equation}\label{graph-diagram-non-normal-lP=4+III}
\vcenter{
\xy
\xymatrix@R=7pt@C=11pt{
&\circ\ar@{-}[r]&\overset {3}\circ\ar@{-}[d]\ar@{-}[r]&\circ
\\
\bullet \ar@{-}[r] &\underset {}\circ\ar@{-}[r]&\circ\ar@{-}[r]&\circ
}
\endxy}
\end{equation}
\end{computation}

\begin{proof}
We use the notation of \xref{notation-blowup}.
In our case $\Xi=2\Xi_1+2\Xi_2$, where $\Xi_1:=\{y_2=y_4=0\}$, \ 
$\Xi_2:=\{y_2=\eta y_1^2+ y_4=0\}$, and
$\Xi_1\cap \Xi_2=\{Q\}$.

\begin{sclaim}\label{claim-construction-blowup-intersection-Lambda}
\begin{enumerate}[leftmargin=20pt]
\item 
$\Sing(\tilde H)\cap \Xi_1=\{O,\, Q\}$.
\item 
$\Sing(\tilde H)\cap \Xi_2=\Xi_2\cap\Lambda \cup \{Q\}$.
\end{enumerate}
\end{sclaim}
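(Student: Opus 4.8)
The plan is to derive the claim as a specialization of Claim~\ref{claim-3-construction-blowup-a}. Since $\eta\in\CC^*$, that claim applies and gives
\begin{equation*}
\Sing(\tilde H)=\bigl(\Supp(\Xi)\cap\Lambda\bigr)\cup\{Q\}\cup\tilde C .
\end{equation*}
As $\Supp(\Xi)=\Xi_1\cup\Xi_2$, proving (i) and (ii) amounts to intersecting this locus with $\Xi_1$ and with $\Xi_2$, that is, to locating the three pieces $\Xi_i\cap\Lambda$, $\{Q\}$ and $\tilde C$ on each curve $\Xi_i$.

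First I would record the elementary incidences inside $\Pi=\PP(1,1,3,2)$. The point $O=(1:0:0:0)$ lies on $\Xi_1=\{y_2=y_4=0\}$ but, as $\eta\neq 0$, not on $\Xi_2=\{y_2=\eta y_1^2+y_4=0\}$, while $Q=(0:0:1:0)$ lies on both. Since $\tilde C\cap\Pi=\{O\}$ (Construction~\ref{notation-computations--sing}) and $\Xi_i\subset\Pi$, one has $\tilde C\cap\Xi_1=\{O\}$ and $\tilde C\cap\Xi_2=\emptyset$. Moreover $Q\notin\Lambda$: among the $\sigma$-weight $3/2$ monomials of $\alpha$ only $y_3^2$ is nonzero at $Q$, where it equals $1$. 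This also shows $\Xi_1\cap\Xi_2\cap\Lambda=\{Q\}\cap\Lambda=\emptyset$, so $\Xi_1$ and $\Xi_2$ contribute disjoint pieces of $\Supp(\Xi)\cap\Lambda$.

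The single genuine computation is $\Xi_1\cap\Lambda=\{O\}$. Restricting $\alpha_{\sigma=6/4}$ to $\Xi_1$, every weight $3/2$ monomial other than $y_3^2$---namely $cy_1^2y_4^2$, $\epsilon y_1y_3y_4$, the term $y_1^{4l}y_4$ (present when $l=1$), the term $\delta y_4^{2k+1}$ (present when $k=1$), and the summands of $y_2\alpha'$---contains $y_2$ or $y_4$ and hence vanishes on $\Xi_1$. Therefore $\alpha_{\sigma=6/4}|_{\Xi_1}=y_3^2$, and $\Lambda\cap\Xi_1=\{y_3=0\}\cap\Xi_1=\{O\}$.

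Assembling the pieces, on $\Xi_1$ the decomposition yields $(\Xi_1\cap\Lambda)\cup\{Q\}\cup(\tilde C\cap\Xi_1)=\{O\}\cup\{Q\}\cup\{O\}=\{O,Q\}$, which is (i); on $\Xi_2$ it yields $(\Xi_2\cap\Lambda)\cup\{Q\}\cup\emptyset=(\Xi_2\cap\Lambda)\cup\{Q\}$, which is (ii). There is no serious obstacle here: the only steps requiring care are the weight bookkeeping isolating $y_3^2$ on $\Xi_1$ and the verification $Q\notin\Lambda$, both of which rule out spurious extra intersection points. Alternatively one could bypass Claim~\ref{claim-3-construction-blowup-a} and compute $\Sing(\tilde H)$ directly in the charts $U_1$ and $U_3$, but reusing the earlier claim is cleaner.
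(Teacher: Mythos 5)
Your proposal is correct and follows exactly the paper's route: the paper likewise deduces both statements from Claim~\xref{claim-3-construction-blowup-a} (applicable since $\eta\neq 0$) together with the observations $Q\notin\Lambda$ and $\Xi_1\cap\Lambda=\{O\}$. Your write-up merely spells out the incidence checks ($O\notin\Xi_2$, $\tilde C\cap\Pi=\{O\}$, the weight bookkeeping isolating $y_3^2$ on $\Xi_1$) that the paper leaves implicit.
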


\begin{proof}
By Claim \ref{claim-3-construction-blowup-a} we have 
$\Sing(\tilde H)\subset \Lambda\cup \tilde C\cup \{Q\}$.
On the other hand, $Q\notin \Lambda$ and $\Xi_1\cap \Lambda=\{O\}$.
\end{proof}

\begin{sclaim}
$O\in \tilde H$ is a pinch point.
\end{sclaim}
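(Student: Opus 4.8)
The plan is to argue exactly as in the pinch‑point step of Computation \ref{computation-lP=3a-III}: pass to the affine chart $U_1=\{y_1\neq 0\}\simeq \CC^4$, in which $O$ is the origin and $\tilde C$ is the $y_1$-axis, write down the two defining equations of $\tilde H=\tilde X\cap \tilde V$ there, use $\beta_{U_1}=0$ to eliminate one variable, and recognize the resulting hypersurface equation as a pinch point in the sense of Definition \ref{definition-pinch-point}.

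First I would compute the proper transforms in $U_1$. Since the chart coordinate $y_1$ is the monomial produced by the $\sigma$-weighted blowup together with the $\muu_4$-quotient, a monomial $y_1^ay_2^by_3^cy_4^d$ acquires the factor $y_1^{\,\sigmaord(\cdot)-w_0}$, where $w_0=\sigmaord(\alpha)=1/2$ for $\alpha$ and $w_0=\sigmaord(\beta)=1$ for $\beta$. This gives
\begin{equation*}
\alpha_{U_1}=y_2^2+y_1^{l}y_4+y_1\bigl(y_3^2+c\,y_4^2+\epsilon\,y_3y_4+\delta\,y_1^{k-1}y_4^{2k+1}+y_2\alpha_\bullet+y_1\alpha_\blacktriangle\bigr),
\end{equation*}
\begin{equation*}
\beta_{U_1}=y_4^2+\eta\,y_4+\nu\,y_2y_3+y_2\beta_\bullet+y_1\beta_\blacktriangle,
\end{equation*}
with $\alpha_\bullet,\beta_\bullet\in(y_2,y_3,y_4)$ and $\alpha_\blacktriangle,\beta_\blacktriangle\in(y_2,y_3,y_4)^2$. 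Because $\eta\in\CC^*$, the variable $y_4$ occurs linearly in $\beta_{U_1}$ with unit coefficient, so by the implicit function theorem $\beta_{U_1}=0$ can be solved as $y_4=w(y_1,y_2,y_3)$ with $w\in\mm^2$ and leading term $-\tfrac{\nu}{\eta}y_2y_3$.

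Substituting $y_4=w$ into $\alpha_{U_1}$ and keeping track of orders, the terms $y_1^{l}y_4$, $c\,y_1y_4^2$ and $\epsilon\,y_1y_3y_4$ become of order $>3$, so none of them contributes to the monomial $y_1y_3^2$; hence
\begin{equation*}
\alpha_{U_1}|_{y_4=w}=y_2^2+y_1y_3^2+(\text{higher order terms}),
\end{equation*}
where the coefficient of $y_1y_3^2$ remains equal to $1$. Here $y_2$ and $y_3$ play the roles of $z_2$ and $z_3$, and $\tilde C=\{y_2=y_3=0\}$ is the singular locus, so this is precisely the equation of a pinch point.

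The one point to verify carefully is that the higher‑order remainder does not alter the analytic type. As in Computation \ref{computation-lP=3a-III} this is routine: completing the square in $y_2$ (whose coefficient is a unit) removes all terms involving $y_2$, and an analytic coordinate change in $(y_1,y_3)$ absorbs the remaining higher‑order terms of $y_1y_3^2+\cdots$, reducing the equation to $y_2^2+y_1y_3^2=0$. The only genuine input is that the leading coefficient of $y_1y_3^2$ is nonzero, which holds automatically since it equals the coefficient of $y_3^2$ in $\alpha$; thus $O\in\tilde H$ is a pinch point.
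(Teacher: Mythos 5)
Your argument is essentially identical to the paper's: pass to the chart $U_1$, use $\eta\neq 0$ to solve $\beta_{U_1}=0$ for $y_4$, substitute into $\alpha_{U_1}$, and reduce the resulting hypersurface equation to $y_2^2+(\unit)\cdot y_1y_3^2=0$ by completing the square in $y_2$. The only quibble is that for $l=1$ the term $y_1^{l}y_4$ becomes of order exactly $3$ (not $>3$) after substituting $y_4=w$, but its order-$3$ part lies in the ideal $(y_2)$ and so still contributes nothing to $y_1y_3^2$; the conclusion is unaffected.
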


\begin{proof}
In the affine chart $U_1$
the equations of $\tilde H$ have the form
\begin{eqnarray*}
\alpha_{U_1}&=&y_2^2+y_1(y_1^{l-1}y_4+y_3^2+\delta y_4^{2k+1}+c y_4^2+\epsilon y_3y_4+y_2\alpha_\bullet
+y_1\alpha_\blacktriangle),
\\
\beta_{U_1}&=&y_4^2+\nu y_2y_3+\eta y_4+ y_2\beta_\bullet+y_1\beta_\blacktriangle,
\end{eqnarray*}
where 
$\alpha_\blacktriangle\in (y_2,\, y_3,\, y_4)^2$, 
$\beta_\bullet\in (y_2,\, y_4)$, 
$\alpha_\bullet\in (y_2,\, y_3,\, y_4)$,
and $\beta_\blacktriangle\in (y_2,\, y_3,\, y_4)^2$ by Remark \ref{remark-computation-normality}.
{}From $\beta_{U_1}$ we have
\begin{equation*}
y_4= u y_2y_3+ y_2\beta_1+y_1\beta_2,\quad \beta_1\in (y_2),\ \beta_2\in (y_2,y_3)^2,
\ u=\unit.
\end{equation*}
Then we can eliminate $y_4$ from $\alpha_{U_1}$:
\begin{equation*}
y_2^2+y_1y_3^2+
\gamma_1y_1y_2+\gamma_2y_1 +\gamma_3 y_1^2=0,
\end{equation*}
where $\gamma_1\in (y_2,y_3)$, $\gamma_2\in (y_3)^4$, $\gamma_3\in (y_3)^2$.
By completing the square we can put the equation of $\tilde H$ at $O$ to the following form
\begin{equation*}
y_2^2+(\unit )\cdot y_1y_3^2=0. \qedhere
\end{equation*}
\end{proof}

Recall that by Claim \ref{claim-construction-blowup-Du-Val} the surface
$\tilde H^{\n}$ has only Du Val singularities.
As in \cite[9.1.6]{Mori-Prokhorov-IIA-1}
we see that the pair $(\tilde H, \Xi_1+\Xi_2)$
is not lc at $Q$ and lc outside $Q$ and $\tilde C$.
Thus the dual graph $\Delta(H, C)$ has the form
\begin{equation}\label{graph-diagram-non-normal-lP=4+IIIa}
\vcenter{
\xy
\xymatrix@R=1pt@C=23pt{
&&{\ovalv{\phantom{P}$\scriptstyle P$\phantom{P}}}\ar@{-}[r]&\overset {\Xi_2}\circ \ar@{-}[d]
\\
\underset C \bullet\ar@{-}[rr] &
&\underset {\Xi_1}\circ\ar@{-}[r]&\circ\ar@{-}[r]&\circ
}
\endxy
}
\end{equation}
where \ovalh{$\scriptstyle P$} 
is a Du Val subgraph which is not empty (but possibly disconnected).
By Claim \ref{claim-computation-Xi-n} we have $\hat \Xi_2^2=-3$ and $\hat \Xi_1^2= -2$.
Further, 
\begin{equation*}
\Xi_2\cdot (\Xi_1+\Xi_2)=\textstyle\frac 12 \Xi_2\cdot \Pi= -\frac 23,\quad \Xi_1\cdot \Xi_2=\frac 23,
\quad \Xi_2^2=-\frac 43.
\end{equation*}
Then as in the proof of \cite[Lemma 3.8]{Mori-Prokhorov-IIA-1} we have
$\deg \Diff_{\Xi_2}(0)=5/3$. There are two possibilities:
$\Diff_{\Xi_2}(0)=\frac 23 Q+\frac 12 P_1+ \frac 12 P_2$ and
$\Diff_{\Xi_2}(0)=\frac 23 Q+P_1$. 
Hence the singularities of $\tilde H$ on $\Xi_2\setminus \{Q\}$ are
either two points which are of type \type {A_1} 
or one point which is of type \type{D_n} or \type{A_3}.
The second possibility occurs only for some specific 
choice of $\eta$ (when two intersection points $\Lambda\cap \Xi_2$ coincide). 
We obtain \eqref{graph-diagram-non-normal-lP=4+III}.
\end{proof}

\par\medskip\noindent
{\bf Acknowledgments.}
The paper was written during the second author's visits to RIMS, Kyoto University.
The author is very grateful to the institute for
the invitation, support, and hospitality.


\begin{thebibliography}{MP11b}

\bibitem[KM92]{Kollar-Mori-1992}
J.~Koll{\'a}r and S.~Mori.
\newblock Classification of three-dimensional flips.
\newblock \textit{J. Amer. Math. Soc.}, 5(3):533--703, 1992.

\bibitem[Kol92]{Utah}
J.~Koll{\'a}r, editor.
\newblock \textit{Flips and abundance for algebraic threefolds}.
\newblock Soci\'et\'e Math\'ematique de France, Paris, 1992.
\newblock Papers from the Second Summer Seminar on Algebraic Geometry held at
the University of Utah, Salt Lake City, Utah, August 1991, Ast\'erisque No.
211 (1992).



\bibitem[Mor88]{Mori-1988}
S.~Mori.
\newblock Flip theorem and the existence of minimal models for {$3$}-folds.
\newblock \textit{J. Amer. Math. Soc.}, 1(1):117--253, 1988.

\bibitem[MP08]{Mori-Prokhorov-2008}
S.~Mori and Y.~Prokhorov.
\newblock On {$\mathbf Q$}-conic bundles.
\newblock \textit{Publ. Res. Inst. Math. Sci.}, 44(2):315--369, 2008.

\bibitem[MP09]{Mori-Prokhorov-2008III}
S.~Mori and Y.~Prokhorov.
\newblock On $\mathbf {Q}$-conic bundles, {III}.
\newblock \textit{Publ. Res. Inst. Math. Sci.}, 45(3):787--810, 2009.

\bibitem[MP11a]{Mori-Prokhorov-IA}
S.~Mori and Y.~Prokhorov.
\newblock Threefold extremal contractions of type {IA}.
\newblock \textit{{K}yoto {J}. {M}ath.}, 51(2):393--438, 2011.

\bibitem[MP11b]{Mori-Prokhorov-IC-IIB}
S.~Mori and Y.~Prokhorov.
\newblock Threefold extremal contractions of types {IC} and {IIB}.
\newblock \textit{Proc. Edinburgh Math. Soc.}
57(1): 231--252, 2014.

\bibitem[MP16]{Mori-Prokhorov-IIA-1}
S.~Mori and Y. Prokhorov.
\newblock Threefold extremal contractions of type {(IIA)}, {I}.
\newblock {\em Izv. Math.}, 80(5):884--909, 2016.

\bibitem[Rei87]{Reid-YPG1987}
M.~Reid.
\newblock Young person's guide to canonical singularities.
\newblock In \textit{Algebraic geometry, Bowdoin, 1985 (Brunswick, Maine, 1985)},
\textit{Proc. Sympos. Pure Math.} \textbf{46}, 345--414. Amer. Math.
Soc., Providence, RI, 1987.


\bibitem[Tzi05]{Tziolas2005}
N.~Tziolas.
\newblock {Families of $D$-minimal models and applications to 3-fold divisorial
contractions.}
\newblock \textit{Proc. Lond. Math. Soc., III. Ser.}, 90(2):345--370, 2005.


\end{thebibliography}

\def\mathbb#1{\mathbf#1} \def\bblapr{April}

\end{document}